\numberwithin{equation}{section}
\title{\bf Relationship between General MP and DPP for the Stochastic Recursive Optimal Control Problem With Jumps: Viscosity Solution Framework \thanks{This work is supported by National Key R\&D Program of China (2022YFA1006104), National Natural Science Foundations of China (12271304, 11971266, 11831010), and Shandong Provincial Natural Science Foundations (ZR2022JQ01, ZR2020ZD24, ZR2019ZD42).}}
\author{\normalsize Bin Wang\thanks{\it School of Mathematics, Shandong University, Jinan 250100, P.R. China, E-mail: 202112005@mail.sdu.edu.cn} , Jingtao Shi\thanks{Corresponding Author. \it School of Mathematics, Shandong University, Jinan 250100, P.R. China, E-mail: shijingtao@sdu.edu.cn}}
\newtheorem{mythm}{Theorem}[section]
\newtheorem{mydef}{Definition}[section]
\newtheorem{mylem}{Lemma}[section]
\newtheorem{Remark}{Remark}[section]
\newtheorem{mycor}{Corollary}[section]
\begin{document}
\begin{CJK}{GBK}{kai}
\maketitle

\noindent{\bf Abstract:}\quad This paper is concerned with the relationship between general maximum principle and dynamic programming principle for the stochastic recursive optimal control problem with jumps, where the control domain is not necessarily convex. Relations among the adjoint processes, the generalized Hamiltonian function and the value function are proved, under the assumption of a smooth value function and within the framework of viscosity solutions, respectively. Some examples are given to illustrate the theoretical results.
\vspace{2mm}

\noindent{\bf Keywords:}\quad Backward stochastic differential equation with jumps, recursive optimal control, maximum principle, dynamic programming principle, viscosity solution

\vspace{2mm}

\section{Introduction}

\qquad Nonlinear {\it backward stochastic differential equation} (BSDE) was first introduced by Pardoux and Peng \cite{PardouxPeng1990}. Since then, the theory of BSDE has garnered significant attention from scholars and has been widely used in stochastic control, partial differential equations, differential games and financial mathematics. Duffie and Epstein  \cite{Duffie1992} pioneered of recursive utilities in continuous time, a specific type of BSDE. Recursive utility is an extension of the standard additive utility, where the instantaneous utility not only depend on the instantaneous consumption rate but also on the future utility. The {\it forward-backward stochastic differential equation} (FBSDE) was first initially explored by Antonelli \cite{Antonelli1993}, and subsequently underwent rapid theoretical and practical achievement. Optimal control problems of FBSDEs, among others, can be seen in Peng \cite{Peng1992, Peng1993}, El Karoui et al. \cite{EPQ1997}, Wu and Yu \cite{WuYu2008}, Yong \cite{Yong2010}, Wu \cite{Wu2013}, Hu \cite{Hu2017}, Hu et al. \cite{HuJiXue2018, HuJiXue2019}.

In literatures, there are two commonly use approaches to solve optimal control problems: Pontryagin's {\it maximum principle} (MP) and Bellman's {\it dynamic programming principle} (DPP). Peng \cite{Peng1990} proved the general MP for the stochastic control problem, and there are many works about DPP. Here, the word {\it general} means that the control domain is not necessarily convex and the diffusion term is control dependent. In recent years, many scholars have conducted researches on their relationship. This topic was initially explored by Bismut \cite{Bismut1978}, but for some special cases. Zhou \cite{Zhou1990} was the first one to establish the relationship between general MP and DPP for the stochastic optimal control problem, without assuming the smoothness of the value function, using the viscosity solution and the second-order adjoint equation (see also Yong and Zhou \cite{YongZhou1999}). Chen and Lv \cite{ChenLv2023} investigated the relationship between the general MP and DPP for control systems governed by stochastic evolution equations in infinite dimensional space, with the control variables entering into both the drift and the diffusion terms.

The connection between the MP and DPP of the optimal control problem of FBSDE was first obtained by Shi and Yu \cite{ShiYu2013} in the case that the value functions is smooth, where the recursive utility functional is given by the solution to a controlled FBSDE. Nie et al. \cite{NieShiWu2017} studied the connection between general MP and DPP for stochastic recursive optimal control problems under the viscosity solution's framework. Hu et al. \cite{HuJiXue2020} studied the relationship between general MP and DPP for the fully coupled forward-backward stochastic controlled system within the framework of viscosity solution. Li \cite{Li2023} researched the relationship between MP and DPP for stochastic recursive optimal control problems under volatility uncertainty.

In the recent decades, more and more research attentions have been drawn towards the optimal control problem for discontinuous stochastic systems. Models that incorporate jumps have become increasingly popular in finance and various areas of science and engineering. This trend underscores the heightened attention to optimal control problems involving systems of {\it stochastic differential equations with Poisson jumps} (SDEPs), i.e., jump diffusions. Optimal control problems of jump diffusions, can be seen in Situ \cite{Situ1991}, Tang and Li \cite{TangLi1994}, Pham \cite{Pham1998}, Li and Peng \cite{LiPeng2009}, \O ksendal and Sulem \cite{OksendalSulem2006, OksendalSulem2009}, Shi and Wu \cite{ShiWu2010}, Shi \cite{Shi2012, Shi2012-}, Song et al. \cite{SongTangWu2020}, Moon and Basar \cite{MoonBasar2022}, Song and Wu \cite{SongWu2023} and the references therein.

Framstad et al. \cite{FOS2004} first investigated the relationship between MP and DPP for the optimal control problem of jump diffusions, on the assumption that the value function is smooth. Shi and Wu \cite{ShiWu2011} established the relationship between the general MP and DPP within the framework of viscosity solutions. Chighoub and Mezerdi \cite{ChighoubMezerdi2014} found the relationship between MP and DPP in singular control of jump diffusions. Shi \cite{Shi2014} explored the relationship between MP and DPP for stochastic recursive optimal control problems of jump diffusions. Sun et al. \cite{SunGuoZhang2018} investigated the relationship between MP and DPP for Markov regime-switching forward-backward stochastic controlled system with jumps. However, to the best of our knowledge, there is no results about the relationship between general MP and DPP for stochastic recursive optimal control problems of jump diffusions. So, in this paper, we will research this topic and fill in the gap in the literatures.

The contribution of this paper can be summarized as follows.

(1) This paper delves into the stochastic recursive optimal control problem with Poisson jumps within the framework of viscosity solutions. Notably, the control domain is permitted to be non-convex, and the control variables are incorporated into the coefficients of the drift, diffusion and jumps. We extend the work of \cite{NieShiWu2017} to incorporate jumps, generalize the findings of \cite{ShiWu2011} to a forward-backward system, and broaden the scope of \cite{Shi2014} to encompass more general scenarios. Overall, our consideration of a more intricate system enhances its practical applicability in scenarios involving both continuous variations and abrupt jumps.

(2) Since Zheng and Shi \cite{ZhengShi2023}'s emphasis was primarily on the partially observed system, the adjoint equations introduced in it was insufficient to derive the relationship of the adjoint process $(p,q,\tilde{q})$ and the value function $V$. However, inspired by \cite{NieShiWu2017}, a novel second-order adjoint equation (\ref{second-order adjoint equation}) and Hamiltonian function $H$ (\ref{WangShi's Hamilton function}) are introduced in this paper, differentiating it from the work of \cite{ZhengShi2023}. Additionally, we establish the relationships (\ref{relation of adjoint equation}) between the adjoint equations and Hamiltonian functions in both the above mentioned papers. Moreover, by (\ref{WangShi's Hamilton function}), (\ref{relation of adjoint equation}) and \cite{ZhengShi2023}, a general MP (Theorem \ref{general MP}) is given for our problem, which is new of its own.

(3) While the relationship under the smooth assumption is investigated, as expressed in (\ref{relation of p and V}), by leveraging the results in (\ref{relation of adjoint equation}) of this paper, we fill in a gap of Remark 3.1 in \cite{Shi2014}.

(4) During the derivation of Theorem \ref{nonsmooth time theorem}, we discovered that (63) in \cite{ShiWu2011} lacks a term $\int_{\mathcal{E}}\int_s^\tau \big\langle \tilde{Q}_{(r,e)}\bar{f}(r,e),\bar{f}(r,e) \big\rangle \nu(de)dr$ compared to (\ref{estimate of PXX with tau}). By adding this missing term, a new result (\ref{revised result in ShiWu2011}) is obtained, which indicates there is no need to construct a new function $\mathcal{G}$ in \cite{ShiWu2011}.

The remaining sections of this paper is outlined as follows. In section 2, we state our problem and give some preliminary results about the general MP and DPP. Section 3 and 4 exhibit the main results of this paper. We give the relationship of the general MP and DPP under the assumption of a smooth value function in section 3, and delve into the new relation under the framework of viscosity solutions in section 4. In section 5, we give some examples to explain the theoretical results. Finally, in section 6, some concluding remarks are given.

{\it Notations.}\quad In this paper, $\mathbf{R}^n$ denotes the $n$-dimensional Euclidean space with scalar product $\langle\cdot, \cdot\rangle$ and norm $|\cdot|$, $\mathcal{S}^n$ denotes the $n \times n$ symmetric matrix space, $D g, D^2 g$ denotes the gradient and the Hessian matrix of the differentiable function $g$, respectively, $\top$ appearing as a superscript denotes the transpose of a matrix, and $C>0$ denotes a generic constant which may take different values in different places. We denote by $f\in C^{i,j}([0,T]\times\mathbf{R}^n)$ that the real-valued function $f:[0,T]\times\mathbf{R}^n\rightarrow\mathbf{R}$ is $i$-order continuously differentiable with $t$ and $j$-order continuously differentiable with $x$, for $i,j=0,1,2,\cdots$ (we denote $C\equiv C^{0,0}$ when $i=j=0$).

\section{\bf Problem statement, general MP and DPP}

\qquad Let $T>0$ be fixed and $\mathbf{U} \subset \mathbf{R}^k$ be nonempty. Let $(\mathcal{E}, \mathcal{B}(\mathcal{E}))$ be a Polish space with the $\sigma$-finite measure $\nu(\cdot)$ on $\mathcal{E}$. We denote by $\mathcal{L}^2(\mathcal{E},\mathcal{B}(\mathcal{E}),\nu;\mathbf{R}^n)$ or $\mathcal{L}^2$ the set of square-integrable functions $f(\cdot):\mathcal{E}\rightarrow\mathbf{R}^n$ such that $\left\|f(\cdot)\right\|_{\mathcal{L}^2}^2:=\int_{\mathcal{E}}|f(e)|^2\nu(de)<\infty $.

Given $t\in[0,T)$, we denote $\mathcal{U}^w[t,T]$ the set of all 6-tuples $(\Omega,\mathcal{F},\mathbf{P},W_\cdot,\tilde{N}(\cdot,\cdot);u_\cdot)$ satisfying the following conditions: (i) $(\Omega,\mathcal{F},\mathbf{P})$ is a complete probability space, and $\mathbf{E}$ denotes the expectation under the probability measure $\mathbf{P}$; (ii) $\{W_s\}_{s\geqslant t}$ is a one-dimensional standard Brownian motion defined on $(\Omega,\mathcal{F},\mathbf{P})$ over $[t,T]$ (with $W_t=0$, $\mathbf{P}\text{-a.s.}$); (iii) $\{N(d e, d s)\}_{s\geqslant t}$ is a Poisson random measure on $\left(\mathbf{R}^+ \times \mathcal{E}, \mathcal{B}\left(\mathbf{R}^+ \right)\times \mathcal{B}\left(\mathcal{E}\right)\right)$ on $(\Omega,\mathcal{F},\mathbf{P})$ over $[t,T]$ and for any $E \in \mathcal{B}\left(\mathcal{E}\right)$, $\nu\left(E\right)<\infty$, then the compensated Poisson random measure can be defined by $\tilde{N}(d e, d s):=N(d e, d s)-\nu(d e) d s$. (iv) The filtration $\left\{\mathcal{F}_s^t;t\leqslant s\leqslant T\right\}$ is generated as the following $\mathcal{F}_s^t:=\mathcal{F}_s^{W} \vee \mathcal{F}_s^{N} \vee \mathcal{N}$, where $W, N$ are mutually independent under $\mathbf{P}$, $\mathcal{F}_s^{W}, \mathcal{F}_s^{N}$ are the $\mathbf{P}$-completed natural filtrations generated by $W, N$ respectively, and $\mathcal{N}$ denotes the totality of $\mathbf{P}$-null sets. In particular, if $t=0$ we write $\mathcal{F}_s\equiv \mathcal{F}_s^t$.  (v) $u:[t,T]\times\Omega\rightarrow\mathbf{U}$ is an $\{\mathcal{F}_s^t\}_{s\geq t}$-predictable process on $(\Omega,\mathcal{F},\mathbf{P})$ such that $\mathbf{E}\big[\sup\limits_{t\leqslant s\leqslant T}|u_s|^k\big]^{1/k}<\infty$ for $k>1$.

For given $t\in[0,T)$, we write $(\Omega,\mathcal{F},\mathbf{P},W_\cdot,N(\cdot,\cdot);u_\cdot)\in\mathcal{U}^w[t,T]$, but occasionally we write only $u_\cdot\in\mathcal{U}^w[t,T]$ if no ambiguity exists. Any $u_\cdot\in\mathcal{U}^w[t,T]$ is called an admissible control. We denote by $L_{\mathcal{F}}^2([t,T];\mathbf{R}^n)$ the space of all $\mathbf{R}^n$-valued $\mathcal{F}_s^t$-adapted processes $\phi(\cdot)$ such that $\mathbf{E}\int_t^T |\phi(s)|^2ds<\infty$, by $L_{\mathcal{F},p}^2([t,T];\mathbf{R})$ the space of all $\mathbf{R}$-valued $\mathcal{F}_s^t$-predictable processes $\varphi(\cdot)$ such that $\mathbf{E}\int_t^T |\varphi(s)|^2ds<\infty$, and by $F_p^2([t,T]\times\mathcal{E};\mathbf{R})$ the space of all $\mathbf{R}$-valued $\mathcal{F}_s^t$-predictable processes $\tilde{\varphi}(\cdot,\cdot)$ such that $\mathbf{E}\int_t^T \int_{\mathcal{E}} |\tilde{\varphi}(s,e)|^2\nu(de)ds<\infty$.

For any initial time and state $(t, x) \in[0, T) \times \mathbf{R}^n$, consider the state process $\{X_s^{t, x ; u};t\leqslant s\leqslant T\} \in \mathbf{R}^n$ given by the following controlled SDEP:
\begin{equation}
\left\{\begin{aligned}
d X_s^{t, x ; u}= & \ b\left(s, X_s^{t, x ; u}, u_s\right) d s+\sigma\left(s, X_s^{t, x ; u}, u_s\right) d W_s+ \int_{\mathcal{E}}f(s, X_{s-}^{t,x; u}, u_s, e)\tilde{N}(d e,d s), \\
X_t^{t, x ; u}= & \ x .
\end{aligned}\right.
 \label{SDEP equation}
\end{equation}
Here $\{u_s;t\leqslant s\leqslant T\}\in \mathbf{U}$ is the control process, $b:[0, T] \times \mathbf{R}^n \times \mathbf{U} \rightarrow \mathbf{R}^n, \sigma:[0, T] \times \mathbf{R}^n \times \mathbf{U} \rightarrow \mathbf{R}^{n }$ and $f:[0, T] \times \mathbf{R}^n \times \mathbf{U} \times \mathcal{E} \rightarrow \mathbf{R}^n$ are given functions.

For given $u_\cdot\in \mathcal{U}^w[t, T]$ and $x \in \mathbf{R}^n$, any $X^{t, x ; u}_\cdot\in L_{\mathcal{F}}^2([t,T];\mathbf{R}^n)$ is called a solution to ($\ref{SDEP equation}$) if ($\ref{SDEP equation}$) holds. Note that $X^{t, x ; u}_\cdot$, in general, is supposed to be RCLL (i.e., right-continuous with left-hand limits). We refer to such $(X^{t, x ; u}_\cdot, u_\cdot)$ as an admissible pair.

We make the following assumption. \\
$(\mathbf{H1})\quad b, \sigma, f$ are uniformly continuous in $(s, x, u)$, and there exists a constant $C>0$, such that
$$
\left\{\begin{array}{l}
|b(s, x, u)-b(s, \hat{x}, \hat{u})|+|\sigma(s, x, u)-\sigma(s, \hat{x}, \hat{u})| \leqslant C(|x-\hat{x}|+|u-\hat{u}|), \\
\|f(s, x, u, \cdot)-f(s, \hat{x}, \hat{u}, \cdot)\|_{\mathcal{L}^2}\leqslant C(|x-\hat{x}|+|u-\hat{u}|), \\
|b(s, x, u)|+|\sigma(s, x, u)|+\|f(s, x, u, \cdot)\|_{\mathcal{L}^2} \leqslant C(1+|x|+|u|),
\end{array}\right.
$$
for all $s \in[0, T], x, \hat{x} \in \mathbf{R}^n, u, \hat{u} \in \mathbf{U}$. For any  $u_\cdot \in \mathcal{U}^w[t, T]$, under (H1), it is obvious that SDEP (\ref{SDEP equation}) has a unique solution $X_\cdot^{t,x;u}\in L_{\mathcal{F}}^2([t,T];\mathbf{R}^n)$ (see Ikeda and Watanabe \cite{IkedaWatanabe1981}).

Next, we introduce the following controlled {\it backward stochastic differential equation with jumps} (BSDEP) which is coupled with (\ref{SDEP equation}):
\begin{equation}
\left\{\begin{aligned}
-d Y_s^{t, x ; u}= & \ g\left(s, X_s^{t, x ; u}, Y_s^{t, x ; u}, Z_s^{t, x ; u}, \tilde{Z}_{(s,\cdot)}^{t,x;u}, u_s\right) d s -Z_s^{t, x ; u} d W_s -\int_{\mathcal{E}}\tilde{Z}_{(s,e)}^{t,x;u} \tilde{N}(d e, d s), \\
Y_T^{t, x ; u}= & \ \phi\left(X_T^{t, x ; u}\right),
\end{aligned}\right.
 \label{BSDEP equation}
\end{equation}
whose solution is a process triple $\left\{\left(Y_s^{t, x ; u}, Z_s^{t, x ; u}, \tilde{Z}_{(s,\cdot)}^{t, x ; u}\right);t\leqslant s\leqslant T\right\}$. Here $g:[0, T] \times \mathbf{R}^n \times \mathbf{R} \times \mathbf{R} \times \mathcal{L}^2 \times \mathbf{U} \rightarrow \mathbf{R}, \ \phi:\mathbf{R}^n \rightarrow \mathbf{R}$ are given functions.
We assume that\\
$(\mathbf{H2}) \quad g,\phi$ are uniformly continuous in $(s,x,y,z,\tilde{z},u)$, and there exists a constant $C>0$, such that
$$
\left\{\begin{array}{l}
|g(s, x, y, z,\tilde{z} , u)-g(s, \hat{x}, \hat{y}, \hat{z}, \hat{\tilde{z}}, \hat{u}))| \leqslant C\left(|x-\hat{x}|+|y-\hat{y}|+|z-\hat{z}|+\| \tilde{z}-\hat{\tilde{z}}\|_{\mathcal{L}^2}+|u-\hat{u}|\right), \\
|g(s, x, 0,0,0, u)|+|\phi(x)| \leqslant C(1+|x|), \quad|\phi(x)-\phi(\hat{x})| \leqslant C|x-\hat{x}|,
\end{array}\right.
$$
for all $s \in[0, T], x, \hat{x} \in \mathbf{R}^n, y, \hat{y} \in \mathbf{R}, z, \hat{z} \in \mathbf{R}, \tilde{z}, \hat{\tilde{z}} \in  \mathcal{L}^2, u, \hat{u} \in \mathbf{U}$.

Then, for any $u_\cdot \in \mathcal{U}^w[t, T]$ and the unique solution $X_\cdot^{t, x ; u}\in L_{\mathcal{F}}^2([t,T];\mathbf{R}^n)$ to (\ref{SDEP equation}), under (H2), it is classical that BSDEP (\ref{BSDEP equation}) admits a unique solution $\left(Y_\cdot^{t, x ; u}, Z_\cdot^{t, x ; u}, \tilde{Z}_{(\cdot,\cdot)}^{t, x ; u} \right)\in L_{\mathcal{F}}^2([t,T];\mathbf{R})\times L_{\mathcal{F},p}^2([t,T];\mathbf{R})\times F_p^2([t,T]\times\mathcal{E};\mathbf{R})$ (see Tang and Li \cite{TangLi1994} or Barles et al. \cite{BBP1997}).

Given $u_\cdot \in \mathcal{U}^w[t, T]$, we introduce the cost functional
\begin{equation}
J(t, x ; u_\cdot):=-\left.Y_s^{t, x ; u}\right|_{s=t}, \quad(t, x) \in[0, T] \times \mathbf{R}^n .
 \label{cost functional}
\end{equation}

Our stochastic recursive optimal control problem with jumps is the following.

\noindent{\bf Problem (SROCPJ).}\quad For given $(t, x) \in[0, T) \times \mathbf{R}^n$, to minimize ($\ref{cost functional}$) subject to (\ref{SDEP equation}), (\ref{BSDEP equation}) over $\mathcal{U}^w[t, T]$. We define the value function as
\begin{equation}
V(t, x):=\inf _{u_\cdot \in \, \mathcal{U}^w[t, T]} J(t, x ; u_\cdot), \quad(t, x) \in[0, T] \times \mathbf{R}^n.
 \label{value function}
\end{equation}
Any $\bar{u}_\cdot$ satisfied (\ref{value function}) is called an optimal control, and the corresponding solution $\left(\bar{X}_\cdot^{t, x ; \bar{u}}, \bar{Y}_\cdot^{t, x ; \bar{u}}\right.$, $\left.\bar{Z}_\cdot^{t, x ; \bar{u}},\bar{\tilde{Z}}_{(\cdot,\cdot)}^{t,x;\bar{u}}\right)\in L_{\mathcal{F}}^2([t,T];\mathbf{R}^n)\times L_{\mathcal{F}}^2([t,T];\mathbf{R})\times L_{\mathcal{F},p}^2([t,T];\mathbf{R})\times F_p^2([t,T]\times\mathcal{E};\mathbf{R})$ to (\ref{SDEP equation}) and (\ref{BSDEP equation}) is called an optimal trajectory.

\begin{Remark}
From their Proposition 3.1 of \cite{LiPeng2009}, we know that under (H1), (H2), the above value function is a deterministic function, then it is well-defined.
\end{Remark}

In the following, we introduce the DPP and general MP approaches for {\bf Problem (SROCPJ)} in the literatures, respectively, to characterize the optimal control and the value function. We first introduce the following generalized HJB equation:
\begin{equation}
\left\{\begin{array}{l}
-v_t(t, x)+ \underset{u \in \mathbf{U}}\sup \ G\left(t, x,-v(t, x),-v_x(t, x),-v_{x x}(t, x), u\right)=0, \quad (t, x) \in[0, T) \times \mathbf{R}^n, \\
v(T, x)=-\phi(x), \quad \forall x \in \mathbf{R}^n,
\end{array}\right.
 \label{HJB equation}
\end{equation}
where the generalized Hamiltonian function $G:[0, T] \times$ $\mathbf{R}^n \times \mathbf{R} \times \mathbf{R}^n \times \mathcal{S}^n \times \mathbf{U} \rightarrow \mathbf{R}$ is defined as
\begin{equation}
\begin{aligned}
& G(t, x, -v(t,x), -v_x(t,x), -v_{x x}(t,x), u) \\
:=& -\frac{1}{2} \operatorname{tr}\left(\sigma(t, x,u)^\top v_{x x}(t,x) \sigma(t, x,u)\right)-\left\langle v_x(t,x), b(t, x, u)\right\rangle \\
& -\int_{\mathcal{E}}\left[v\left(t,x+f(t,x,u,e)\right)-v(t,x)-\left\langle v_x(t,x),f(t,x,u,e)\right\rangle\right]\nu (de) \\
& +g\left(t, x, -v(t,x), -\sigma(t, x,u)^\top v_x(t,x),-\int_{\mathcal{E}}\left[v(t,x+f(t,x,u,e))-v(t,x)\right]\nu(de),u\right).
\end{aligned}
 \label{generalized Hamiltonian function G}
\end{equation}

The following result can be inferred by their Theorem 4.1 of \cite{LiPeng2009}.

\begin{mylem}
Let $(H1), (H2)$ hold and $(t, x) \in[0, T) \times \mathbf{R}^n$ be fixed. Suppose $V \in C^{1,2}\left([0, T] \times \mathbf{R}^n\right)$. Then $V$ is a solution to (\ref{HJB equation}).
\end{mylem}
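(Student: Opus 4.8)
The plan is to deduce the HJB equation from the dynamic programming principle (DPP) attached to \textbf{Problem (SROCPJ)}, which by the cited Theorem 4.1 of \cite{LiPeng2009} I may take in the backward-semigroup form
\[
V(t,x) = \inf_{u_\cdot}\left\{-G^{t,x;u}_{t,t+\delta}\big[-V(t+\delta, X^{t,x;u}_{t+\delta})\big]\right\},
\]
where $G^{t,x;u}_{t,t+\delta}[\eta]=\hat{Y}_t$ is the initial value of the solution $(\hat{Y},\hat{Z},\hat{\tilde{Z}})$ of the BSDEP (\ref{BSDEP equation}) on $[t,t+\delta]$ with the same generator $g$ and terminal datum $\eta$. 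The terminal condition is immediate: at $s=T$ the BSDEP forces $Y^{T,x;u}_T=\phi(x)$, so $J(T,x;u_\cdot)=-\phi(x)$ for every admissible $u_\cdot$ and hence $V(T,x)=-\phi(x)$, matching (\ref{HJB equation}). It remains to verify the interior equation at a fixed $(t,x)$ with $t<T$.

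For a fixed constant control $u\in\mathbf{U}$ I would compare two processes on $[t,t+\delta]$: the semigroup solution $Y^{1}_s=G^{t,x;u}_{s,t+\delta}[-V(t+\delta,X^{t,x;u}_{t+\delta})]$ and $Y^{2}_s=-V(s,X^{t,x;u}_s)$. Applying the It\^o formula for jump diffusions to $-V(s,X_s)$ and using $V\in C^{1,2}$, the process $Y^{2}$ solves a BSDEP with $Z^{2}_s=-\sigma^\top V_x$, $\tilde{Z}^{2}_{(s,e)}=-[V(s,X_{s}+f)-V(s,X_{s})]$ and a driver that, by the very definition (\ref{generalized Hamiltonian function G}) of $G$, equals $g(s,X_s,Y^{2}_s,Z^{2}_s,\tilde{Z}^{2}_{(s,\cdot)},u)+V_t(s,X_s)-G(s,X_s,-V,-V_x,-V_{xx},u)$. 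Since $Y^{1}$ and $Y^{2}$ share the terminal value $-V(t+\delta,X_{t+\delta})$, the difference $\hat{Y}=Y^{1}-Y^{2}$ solves a \emph{linear} BSDEP with zero terminal value whose inhomogeneous term is $-(V_t-G)(s,X_s,u)$; representing $\hat{Y}_t$ through the associated adjoint (density) process $\Gamma_{t,\cdot}$ with $\Gamma_{t,t}=1$ gives
\[
\hat{Y}_t=-\mathbf{E}\int_t^{t+\delta}\Gamma_{t,s}\big[V_t(s,X_s)-G(s,X_s,-V,-V_x,-V_{xx},u)\big]\,ds.
\]
Dividing by $\delta$ and letting $\delta\downarrow0$, using $X_t=x$, $\Gamma_{t,t}=1$ and continuity of the integrand, yields $\delta^{-1}\hat{Y}_t\to-[V_t-G](t,x,u)$. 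The ``$\leqslant$'' part of the DPP (the infimum lies below each constant-control value) forces $\hat{Y}_t\leqslant0$, hence $-V_t(t,x)+G(t,x,-V,-V_x,-V_{xx},u)\leqslant0$ for every $u$, and taking the supremum gives $-V_t+\sup_u G\leqslant0$.

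For the reverse inequality I would exploit the infimum in the DPP: for each $\delta>0$ pick a (generally time-varying) $\varepsilon\delta$-optimal control $u^\delta_\cdot$, so that $\hat{Y}^\delta_t\geqslant-\varepsilon\delta$. The same linearization, now along $(X^{u^\delta}_s,u^\delta_s)$ and with the pointwise bound $G(s,X^\delta_s,\cdots,u^\delta_s)\leqslant\sup_u G(s,X^\delta_s,\cdots,u)$, produces after division by $\delta$ and the limit $\delta\downarrow0$ the estimate $V_t(t,x)-\sup_u G(t,x,\cdots)\leqslant\varepsilon$; letting $\varepsilon\downarrow0$ gives $-V_t+\sup_u G\geqslant0$. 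The two inequalities combine to the interior equation, completing the proof. The delicate point is this last step: justifying the $\delta\downarrow0$ limit of $\delta^{-1}\hat{Y}_t$, which requires the standard a priori $L^2$-estimates for BSDEPs (to control $Y^{1},Z^{1},\tilde{Z}^{1}$ and the linearization coefficients), the moment and continuity estimates for $X^{t,x;u}_s\to x$, and care with the jump integral in the driver---precisely the contribution whose omission is flagged in point (4) of the introduction---so that the compensated-measure terms are handled consistently both in the It\^o expansion of $-V(s,X_s)$ and in $G$.
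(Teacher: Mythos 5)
The paper does not actually prove this lemma at all: it is imported verbatim from Theorem 4.1 of \cite{LiPeng2009}, which asserts that $V$ is a viscosity solution of (\ref{HJB equation}); since a viscosity solution that happens to lie in $C^{1,2}([0,T]\times\mathbf{R}^n)$ is a classical solution (take $V$ itself as test function at every point), the lemma follows in one line. Your proposal instead reproves the statement directly by the classical Peng-type argument: the backward-semigroup form of the DPP, an It\^{o} expansion of $-V(s,X^{t,x;u}_s)$ exhibiting $Y^2_s=-V(s,X^{t,x;u}_s)$ as a BSDEP solution whose driver differs from $g$ exactly by $V_t-G$, linearization of the difference $\hat{Y}=Y^1-Y^2$, a duality representation through the process $\Gamma_{t,\cdot}$, and the two-sided limit $\delta\downarrow 0$ (constant controls for one inequality, $\varepsilon\delta$-optimal controls for the other). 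This is a genuinely different and more self-contained route: the paper buys brevity by citation, while your argument makes the mechanism explicit, and your identifications $Z^2_s=-\sigma^\top V_x$, $\tilde{Z}^2_{(s,e)}=-[V(s,X_s+f)-V(s,X_s)]$ and the driver discrepancy $V_t-G$ are precisely the computations (\ref{ito for V})--(\ref{relationship of V and BSDEP}) that the paper itself performs later, inside the proof of Theorem \ref{smooth theorem}, for the optimal control.

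One substantive caveat should be made explicit rather than left under ``care with the jump integral.'' In the reverse inequality you insert the pointwise bound $G(s,X^\delta_s,\cdots,u^\delta_s)\leqslant\sup_{u}G(s,X^\delta_s,\cdots,u)$ \emph{under} the $\Gamma$-weighted expectation; this is legitimate only if $\Gamma_{t,s}\geqslant 0$. For a linear BSDEP dual, $\Gamma$ solves $d\Gamma_s=\Gamma_{s-}\big[a_s\,ds+b_s\,dW_s+\int_{\mathcal{E}}c_{(s,e)}\tilde{N}(de,ds)\big]$, and nonnegativity requires the jump-linearization coefficient of $g$ in $\tilde{z}$ to satisfy $c_{(s,e)}\geqslant -1$. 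Mere Lipschitz continuity of $g$ in $\tilde{z}$ with respect to the $\mathcal{L}^2$-norm, which is all (H2) provides, does not guarantee this; it is the same structural condition needed for the comparison theorem for BSDEPs (see \cite{BBP1997}) on which the DPP of \cite{LiPeng2009} itself rests, so it is implicitly part of the standing hypotheses, but your proof uses it at a concrete step and should state it. A second, more routine point: the remainder estimates that justify $\delta^{-1}\hat{Y}^\delta_t$ having a limit must be uniform over the family of $\varepsilon\delta$-optimal controls $u^\delta_\cdot$, which requires the moment bounds of Lemma \ref{Lemma 4.1} uniformly in the control; since $\mathbf{U}$ is not assumed bounded, this uniformity deserves a word. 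Neither point invalidates the approach, but both are where a blind write-up of this route would have to do real work.
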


In convenient to state the general MP, for given $(t, x) \in[0, T) \times \mathbf{R}^n$, we regard the above coupled ($\ref{SDEP equation}$) and ($\ref{BSDEP equation}$) as a controlled {\it forward-backward SDEP} (FBSDEP):
\begin{equation}
\left\{\begin{aligned}
d X_s^{t, x ; u}= & \ b\left(s, X_s^{t, x ; u}, u_s\right) d s+\sigma\left(s, X_s^{t, x ; u}, u_s\right) d W_s+\int_{\mathcal{E}}f(s,X_{s-}^{t,x;u},u_s,e)\tilde{N}(d e,d s), \\
-d Y_s^{t, x ; u}= & \ g\left(s, X_s^{t, x ; u}, Y_s^{t, x ; u}, Z_s^{t, x ; u},\tilde{Z}_{(s,\cdot)}^{t,x;u}, u_s\right) d s-Z_s^{t, x ; u} d W_s-\int_{\mathcal{E}}\tilde{Z}_{(s,e)}^{t,x;u}\tilde{N}(d e,d s), \\
X_t^{t, x ; u}= & \ x, \ \ \ \ \ Y_T^{t, x ; u}=\phi\left(X_T^{t, x ; u}\right),
\end{aligned}\right.
 \label{FBSDEP equation}
\end{equation}
whose solution is a process quadruple $\left(\bar{X}_\cdot^{t, x ; \bar{u}}, \bar{Y}_\cdot^{t, x ; \bar{u}}\right.$, $\left.\bar{Z}_\cdot^{t, x ; \bar{u}},\bar{\tilde{Z}}_{(\cdot,\cdot)}^{t,x;\bar{u}}\right)\in L_{\mathcal{F}}^2([t,T];\mathbf{R}^n)\times L_{\mathcal{F}}^2([t,T];\mathbf{R})\\\times L_{\mathcal{F},p}^2([t,T];\mathbf{R})\times F_p^2([t,T]\times\mathcal{E};\mathbf{R})$.

In addition, we need the following assumption.\\
$(\mathbf{H3})\quad (1)\ b,\sigma,f,\phi$ are twice continuously differentiable in $x$, and their derivatives $b_x, b_{x x}, \sigma_x$, $\sigma_{x x}, \|f_x(\cdot)\|_{\mathcal{L}^2}, \|f_{x x}(\cdot)\|_{\mathcal{L}^2}, \phi_x, \phi_{x x}$ are continuous in $(s,x,u)$ and uniformly bounded. Moreover there exists a modulus of continuity $\varpi:[0,\infty)\rightarrow[0,\infty)$ and a constant $C>0$ such that
$$
\left\{\begin{array}{l}
\left|b_x(s, x, u)-b_x(s, \hat{x}, \hat{u})\right|+\left|\sigma_x(s, x, u)-\sigma_x(s, \hat{x}, \hat{u})\right| \leqslant C(|x-\hat{x}|+|u-\hat{u}|), \\
\left\|f_x(s, x, u, \cdot)-f_x(s, \hat{x}, \hat{u}, \cdot)\right\|_{\mathcal{L}^2} \leqslant C(|x-\hat{x}|+|u-\hat{u}|), \\
\left|b_{x x}(s, x, u)-b_{x x}(s, \hat{x}, \hat{u})\right|+\left|\sigma_{x x}(s, x, u)-\sigma_{x x}(s, \hat{x}, \hat{u})\right| \leqslant \varpi(|x-\hat{x}|+|u-\hat{u}|), \\
\left\|f_{x x}(s, x, u, \cdot)-f_{x x}(s, \hat{x}, \hat{u}, \cdot)\right\|_{\mathcal{L}^2} \leqslant \varpi(|x-\hat{x}|+|u-\hat{u}|), \\
\left|\phi_x(x)-\phi_x(\hat{x})\right| \leqslant C(|x-\hat{x}|),\quad \quad \left|\phi_{x x}(x)-\phi_{x x}(\hat{x})\right| \leqslant \varpi(|x-\hat{x}|),
\end{array}\right.
$$
for all $s \in[0, T], x, \hat{x} \in \mathbf{R}^n, u, \hat{u} \in \mathbf{U}$.

(2)\ $g$ is twice continuously differentiable with respect to $(x,y,z,\tilde{z})$, $g,Dg,D^2g$ are continuously in $(s,x,y,z,\tilde{z},u)$, $Dg,D^2g$ are uniformly bounded and there exists a modulus of continuity $\varpi:[0,\infty)\rightarrow[0,\infty)$ and a constant $C>0$ such that
$$
\left\{\begin{array}{l}
\left|Dg(s,x,y,z,\tilde{z},u)-Dg(s,\hat{x},\hat{y},\hat{z},\hat{\tilde{z}},\hat{u})\right|\\
\leqslant C\left(|x-\hat{x}|+|y-\hat{y}|+|z-\hat{z}|+\| \tilde{z}-\hat{\tilde{z}}\|_{\mathcal{L}^2}+|u-\hat{u}|\right), \\
\left|D^2g(s,x,y,z,\tilde{z},u)-D^2g(s,\hat{x},\hat{y},\hat{z},\hat{\tilde{z}},\hat{u})\right|\\
\leqslant \varpi\left(|x-\hat{x}|+|y-\hat{y}|+|z-\hat{z}|+\| \tilde{z}-\hat{\tilde{z}}\|_{\mathcal{L}^2}+|u-\hat{u}|\right),
\end{array}\right.
$$
for all $s \in[0, T], x, \hat{x} \in \mathbf{R}^n, y, \hat{y} \in \mathbf{R}, z, \hat{z} \in \mathbf{R}, \tilde{z}, \hat{\tilde{z}} \in  \mathcal{L}^2, u, \hat{u} \in \mathbf{U}$.

\cite{ZhengShi2023} studied a partially observed progressive optimal control problem of FBSDEP, where the control domain is not necessarily convex, and the control variable enter into all the coefficients. A general MP was obtained (their Theorem 3.6 in \cite{ZhengShi2023}). If we consider the completely observed case, that is, let $\sigma_2, f_2,z_\cdot^{2,u}, \tilde{z}_{(\cdot,\cdot)}^{2,u}\equiv 0$ in (2.1) of \cite{ZhengShi2023} so that the state equation is consistent with (\ref{FBSDEP equation}), and let $b_2,\sigma_3,f_3\equiv0$ to disappear the observation equation (2.3) in \cite{ZhengShi2023}. Moreover, we let $l,\Phi\equiv 0, \Gamma(y_0^u)\equiv y_0^u$, so that the cost functional (2.4) in \cite{ZhengShi2023} is reduced to the recursive case (\ref{cost functional}). Thus, we can state the general MP for our {\bf Problem (SROCPJ)}.

\indent Let $\bar{u}_\cdot$ be an optimal control, and $\left(\bar{X}_\cdot^{t, x ; \bar{u}}, \bar{Y}_\cdot^{t, x ; \bar{u}}, \bar{Z}_\cdot^{t, x ; \bar{u}},\bar{\tilde{Z}}_{(\cdot,\cdot)}^{t,x;\bar{u}}\right)\in L_{\mathcal{F}}^2([t,T];\mathbf{R}^n)\times L_{\mathcal{F}}^2([t,T];\mathbf{R})\times L_{\mathcal{F},p}^2([t,T];\mathbf{R})\times F_p^2([t,T]\times\mathcal{E};\mathbf{R})$ be the corresponding optimal trajectory. For all $s \in[0, T]$, we denote
$$
\begin{aligned}
\bar{b}(s) & :=b\left(s, \bar{X}_s^{t, x ; \bar{u}}, \bar{u}_s\right), \quad \bar{\sigma}(s) :=\sigma\left(s, \bar{X}_s^{t, x ; \bar{u}}, \bar{u}_s\right), \\
\bar{g}(s) & :=g\left(s, \bar{X}_s^{t, x ; \bar{u}}, \bar{Y}_s^{t, x ; \bar{u}}, \bar{Z}_s^{t, x ; \bar{u}}, \bar{\tilde{Z}}_{(s,\cdot)}^{t,x;\bar{u}},\bar{u}_s\right), \quad
\bar{f}(s,e) :=f\left(s,\bar{X}_s^{t, x ; \bar{u}}, \bar{u}_s,e\right),
\end{aligned}
$$
and similar notations used for all their derivatives.

For given $(t, x) \in[0, T) \times \mathbf{R}^n$, we introduce the following first- and second-order adjoint BSDEPs:
\begin{equation}
\left\{\begin{aligned}
-dp_s=& \ \bigg\{\bar{b}_x(s)^\top p_s+\bar{\sigma}_x(s)^\top q_s+\bar{g}_x(s)+\bar{g}_y(s)p_s+\bar{g}_z(s)\left[\bar{\sigma}_x(s)^\top p_s+q_s\right] \\
&\ +\bar{g}_{\tilde{z}}(s)\int_{\mathcal{E}}\left[\bar{f}_x(s,e)^\top p_s+\tilde{q}_{(s,e)}+\bar{f}_x(s,e)^\top \tilde{q}_{(s,e)}\right]\nu(de) \\
&\ +\int_{\mathcal{E}}\bar{f}_x(s,e)^\top \tilde{q}_{(s,e)}\nu(de)\bigg\}ds-q_s d W_s-\int_{\mathcal{E}}\tilde{q}_{(s,e)}\tilde{N}(d e,d s),\\
p_T=& \ \phi_x(\bar{X}_T^{t,x;\bar{u}}),
\end{aligned}\right.
 \label{first-order adjoint equation}
\end{equation}
whose solution is the process triple $\left(p_\cdot, q_\cdot, \tilde{q}_{(\cdot,\cdot)}\right)\in L_{\mathcal{F}}^2([t,T];\mathbf{R}^n)\times L_{\mathcal{F},p}^2([t,T];\mathbf{R}^n)\times F_p^2([t,T]\times\mathcal{E};\mathbf{R}^n)$,
\begin{equation}
\left\{\begin{aligned}
-dP_s&=\Bigg\{P_s \bar{b}_x(s)+\bar{b}_x(s)^\top P_s+\bar{b}_{x x}(s)^\top p_s+\bar{\sigma}_x(s)^\top Q_s+Q_s \bar{\sigma}_x(s)+\bar{\sigma}_{x x}(s)^\top q_s \\
&\qquad +\bar{\sigma}_x(s)^\top P_s\bar{\sigma}_x(s)+\bar{g}_y(s)P_s+\bar{g}_z(s)\bar{\sigma}_x(s)^\top P_s+{P}_s\bar{g}_z(s)\bar{\sigma}_x(s) \\
&\qquad +\bar{\sigma}_{x x}(s)^\top \bar{g}_z(s)p_s+\bar{g}_z(s)Q_s+\Psi(s) D^2\bar{g}(s) \Psi(s)^\top \\
&\qquad +\int_{\mathcal{E}}\bigg\{\bar{f}_x(s,e)^\top \tilde{Q}_{(s,e)} +\tilde{Q}_{(s,e)}\bar{f}_x(s,e)+\bar{f}_x(s,e)^\top \tilde{Q}_{(s,e)}\bar{f}_x(s,e) \\
&\qquad +\bar{f}_x(s,e)^\top P_sf_x(s,e)+\bar{f}_{x x}(s,e)^\top \tilde{q}_{(s,e)}+\bar{f}_{x x}(s,e)^\top \bar{g}_{\tilde{z}}(s) p_s \\
&\qquad +\bar{f}_{x x}(s,e)^\top \bar{g}_{\tilde{z}}(s) \tilde{q}_{(s,e)}+\bar{g}_{\tilde{z}}(s)\bar{f}_x(s,e)^\top P_s+P_s\bar{g}_{\tilde{z}}(s)\bar{f}_x(s,e) \\
&\qquad +\bar{g}_{\tilde{z}}(s)\tilde{Q}_{(s,e)}\bar{f}_x(s,e)+\bar{g}_{\tilde{z}}(s)\bar{f}_x(s,e)^\top \tilde{Q}_{(s,e)}\bar{f}_x(s,e) \\
&\qquad +\bar{g}_{\tilde{z}}(s)\bar{f}_x(s,e)^\top P_s\bar{f}_x(s,e)+\bar{g}_{\tilde{z}}(s)\bar{f}_x(s,e)^\top \tilde{Q}_{(s,e)}+\bar{g}_{\tilde{z}}(s)\tilde{Q}_{(s,e)}\bigg\}\nu(de) \Bigg\}ds \\
&\quad -Q_s d W_s-\int_{\mathcal{E}}\tilde{Q}_{(s,e)}\tilde{N}(d e,d s),\\
P_T=& \ \phi_{x x}(\bar{X}_T^{t,x;\bar{u}}),
\end{aligned}\right.
 \label{second-order adjoint equation}
\end{equation}
whose solution is the matrix-valued process triple $\left(P_\cdot, Q_\cdot, \tilde{Q}_{(\cdot,\cdot)}\right)\in L_{\mathcal{F}}^2([t,T];\mathcal{S}^n)\times L_{\mathcal{F},p}^2([t,T];\\\mathcal{S}^n)\times F_p^2([t,T]\times\mathcal{E};\mathcal{S}^n)$,
\begin{equation}
\quad \left\{\begin{aligned}
-dP^*_s&= \bigg\{h_s\Psi^*(s)D^2\bar{g}(s)\Psi^*(s)^\top+\bar{b}_{x x}(s)^\top p^*_s+\bar{b}_x(s)^\top P^*_s+P^*_s\bar{b}_x(s) \\
&\qquad +\bar{\sigma}_{x x}(s)^\top q^*_s+\bar{\sigma}_x(s)^\top Q^*_s+Q^*_s\bar{\sigma}_x(s)+\bar{\sigma}_x(s)^\top P^*_s\bar{\sigma}_x(s) \\
&\qquad +\int_{\mathcal{E}}\Big[\bar{f}_{x x}(s,e)^\top \tilde{q}^*_{(s,e)}+\bar{f}_x(s,e)^\top \tilde{Q}^*_{(s,e)}+\tilde{Q}^*_{(s,e)}\bar{f}_x(s,e) \\
&\qquad +\bar{f}_x(s,e)^\top \tilde{Q}^*_{(s,e)}\bar{f}_x(s,e)+\bar{f}_x(s,e)^\top P^*_s\bar{f}_x(s,e)\Big]\nu(de)\bigg\}d s\\
&\quad -Q^*_s d W_s-\int_{\mathcal{E}}\tilde{Q}^*_{(s,e)}\tilde{N}(d e,d s), \\
P^*_T=& \ h_T \phi_{x x}(\bar{X}_T),
\end{aligned}\right.
 \label{second-order dual equation}
\end{equation}
whose solution is the matrix-valued process triple $\left(P_\cdot^*, Q_\cdot^*, \tilde{Q}_{(\cdot,\cdot)}^*\right)\in L_{\mathcal{F}}^2([t,T];\mathcal{S}^n)\times L_{\mathcal{F},p}^2([t,T];\\\mathcal{S}^n)\times F_p^2([t,T]\times\mathcal{E};\mathcal{S}^n)$, where
$$
\begin{aligned}
\Psi(s):=&\bigg[I_{n\times n},p_s,\bar{\sigma}_x(s)^\top p_s+q_s,\int_{\mathcal{E}}\left(\bar{f}_x(s,e)^\top p_s+\tilde{q}_{(s,e)}+\bar{f}_x(s,e)^\top \tilde{q}_{(s,e)}\right)\nu(de)\bigg], \\
\Psi^*(s):=&\bigg[I_{n\times n},p_s^*,\bar{\sigma}_x(s)^\top p_s^*+q_s^*,\int_{\mathcal{E}}\left(\bar{f}_x(s,e)^\top p_s^*+\tilde{q}_{(s,e)}^*+\bar{f}_x(s,e)^\top \tilde{q}_{(s,e)}^*\right)\nu(de)\bigg],
\end{aligned}
$$
and the following adjoint FBSDEP:
\begin{equation}
\left\{\begin{aligned}
-dp^*_s& =\left[h_s\bar{g}_x(s)+\bar{b}_x(s)^\top p^*_s+\bar{\sigma}_x(s)^\top q^*_s+\int_{\mathcal{E}}\bar{f}_x(s,e)^\top \tilde{q}^*_{(s,e)}\nu(de)\right]ds \\
& \quad \ -q^*_sd W_s-\int_{\mathcal{E}}\tilde{q}^*_{(s,e)}\tilde{N}(d e,d s),s\in[t,T], \\
dh_s& =h_s\bar{g}_y(s)ds+h_s\bar{g}_z(s)d W_s+\int_{\mathcal{E}}h_s\bar{g}_{\tilde{z}}(s)\tilde{N}(d e,d s),\\
h_t& =1,\ \ \ \ \ p^*_T=h_T\phi_x(\bar{X}_T),
\end{aligned}\right.
 \label{first-order dual equation}
\end{equation}
whose solution is the process quadruple $\left(h_\cdot, p_\cdot^*, q_\cdot^*, \tilde{q}_{(\cdot,\cdot)}^*\right)\in L_{\mathcal{F}}^2([t,T];\mathbf{R})\times L_{\mathcal{F}}^2([t,T];\mathbf{R}^n)\times L_{\mathcal{F},p}^2([t,T];\mathbf{R}^n)\times F_p^2([t,T]\times\mathcal{E};\mathbf{R}^n)$.

We also introduce the following two Hamiltonian functions $H^*:[0,T]\times \mathbf{R}^n \times \mathbf{R} \times \mathbf{R} \times \mathcal{L}^2 \times \mathbf{R}^n \times \mathbf{R} \times \mathbf{R}^n \times \mathbf{R}^n \times \mathcal{S}^n \times \mathbf{U} \rightarrow \mathbf{R}$ (see (3.25) of \cite{ZhengShi2023}) as
\begin{equation}
\begin{aligned}
&  H^*(s,x,y,z,\tilde{z};p,h,p^*,q^*,P^*,u) \\
 :=&\left\langle-h g_z p+q^*,\sigma(s,x,u)\right\rangle+\frac {1}{2}\sigma(s,x,u)^\top P^*\sigma(s,x,u)-\sigma(s,x,u)^\top P^*\bar{\sigma}(s) \\
&  +hg\left(s,x,y,z+\langle p,\sigma(s,x,u)-\bar{\sigma}(s)\rangle,\tilde{z},u\right)+\langle p^*,b(s,x,u)\rangle,
\end{aligned}
 \label{ZhengShi's Hamilton function: special}
\end{equation}
and $H:[0,T]\times \mathbf{R}^n \times \mathbf{R} \times \mathbf{R} \times \mathcal{L}^2 \times \mathbf{R}^n \times \mathbf{R}^n \times \mathcal{S}^n \times \mathbf{U}\rightarrow \mathbf{R} $ as
\begin{equation}
\begin{aligned}
&  H(s,x,y,z,\tilde{z};p, q, P, u) \\
 :=&\left\langle p,b(s,x,u) \right\rangle+\langle q,\sigma(s,x,u) \rangle+\frac {1}{2}\sigma(s,x,u)^\top P\sigma(s,x,u)-\sigma(s,x,u)^\top P\bar{\sigma}(s) \\
&  +g\left(s,x,y,z+\langle p,\sigma(s,x,u)-\bar{\sigma}(s)\rangle,\tilde{z},u\right).
\end{aligned}
 \label{WangShi's Hamilton function}
\end{equation}

\begin{Remark}
We can get the adjoint equation (\ref{first-order adjoint equation}), (\ref{first-order dual equation}), (\ref{second-order dual equation}) from (3.9), (3.21), (3.23) in \cite{ZhengShi2023}, respectively. However, there are three important differences among those adjoint equations. Firstly, all the solutions to the adjoint equations in \cite{ZhengShi2023} are one-dimensional, and we expand it to multi-dimensional case. Secondly, due to some technological difficulty, we only consider the predictable frame, not the progressive structure. So we can see that terms, like $\int_{\mathcal{E}}\bar{f}_x(s,e)\tilde{q}_{(s,e)}\nu(de)$ in (\ref{first-order adjoint equation}), are different with those, like $\int_{\mathcal{E}} \mathbb{E}\left[f_{x}(s, e) | \mathcal{P} \otimes \mathcal{B}\left(\mathcal{E}\right)\right] \tilde{q}_{(s, e)} \nu(d e)$, in (3.9) of \cite{ZhengShi2023}. As stated in the \cite{SongWu2023}, \cite{ZhengShi2023}, in fact, $\mathbb{E}$ is not an expectation (since $m$ is not a probability measure), but it owns similar properties to the expectation. And in the predictable frame, we have  $\mathbb{E}\left[f_{x}(s, e) | \mathcal{P} \otimes \mathcal{B}\left(\mathcal{E}\right)\right]={f}_x(s,e)$. Thirdly, we introduce a new second-order adjoint BSDEP (\ref{second-order adjoint equation}) involves $\left(p_\cdot,q_\cdot,\tilde{q}_{(\cdot,\cdot)}\right)$ in (\ref{first-order adjoint equation}), and a new Hamiltonian function $H$ in (\ref{WangShi's Hamilton function}). They not appear in \cite{ZhengShi2023}.
\end{Remark}

\begin{Remark}
We can also see that BSDEPs (\ref{first-order adjoint equation}), (\ref{second-order adjoint equation}) are the generalization of BSDEs (15), (16) in \cite{Hu2017} to jumps, and (\ref{first-order dual equation}) is the one of FBSDE (4.10)-(4.11) in \cite{Peng1993} to jumps. In addition, we can give some relations among these adjoint processes $p,q,\tilde{q},p^*,q^*,\tilde{q}^*,P,Q,\tilde{Q},P^*,Q^*,\tilde{Q}^*,h$ and Hamiltonian functions $H,H^*$. In fact, for all $s\in[t,T]$, we have
\begin{equation}
\left\{\begin{aligned}
p^*_s=& \ h_sp_s, \\
q^*_s=& \ h_s\bar{g}_z(s)p_s+h_sq_s, \\
\tilde{q}^*_{(s,e)}=& \ h_s\bar{g}_{\tilde{z}}(s)p_s+h_s\tilde{q}_{(s,e)}+h_s\bar{g}_{\tilde{z}}(s)\tilde{q}_{(s,e)}, \\
P^*_s=& \ h_sP_s, \\
Q^*_s=& \ h_s\bar{g}_z(s)P_s+h_sQ_s, \\
\tilde{Q}^*_{(s,e)}=& \ h_s\bar{g}_{\tilde{z}}(s)P_s+h_s\tilde{Q}_{(s,e)}+h_s\bar{g}_{\tilde{z}}(s)\tilde{Q}_{(s,e)}, \\
H^*(s,x,y,z,\tilde{z})=&\ h_sH(s,x,y,z,\tilde{z}).
\end{aligned}\right.
 \label{relation of adjoint equation}
\end{equation}
\end{Remark}

The following result is the general MP for {\bf Problem (SROCPJ)}, which is new of its own.
\begin{mythm}\label{general MP}
Let (H1)-(H3) hold, and $(t, x) \in[0, T) \times \mathbf{R}^n$ be fixed. Suppose that $\bar{u}_\cdot$ is an optimal control for {\bf Problem (SROCPJ)}, and $\left(\bar{X}^{t, x ; \bar{u}}_\cdot, \bar{Y}^{t, x ; \bar{u}}_\cdot, \bar{Z}^{t, x ; \bar{u}}_\cdot,\bar{\tilde{Z}}^{t, x ; \bar{u}}_{(\cdot,\cdot)}\right)\in L_{\mathcal{F}}^2([t,T];\mathbf{R}^n)\times L_{\mathcal{F}}^2([t,T];\mathbf{R})\times L_{\mathcal{F},p}^2([t,T];\mathbf{R})\times F_p^2([t,T]\times\mathcal{E};\mathbf{R})$ is the corresponding optimal trajectory. Let $(p_\cdot,q_\cdot,\tilde{q}_{(\cdot,\cdot)})\in L_{\mathcal{F}}^2([t,T];\mathbf{R}^n)\times L_{\mathcal{F},p}^2([t,T];\mathbf{R}^n)\times F_p^2([t,T]\times\mathcal{E};\mathbf{R}^n)$ and $(P_\cdot,Q_\cdot,\tilde{Q}_{(\cdot,\cdot)})\in L_{\mathcal{F}}^2([t,T];\\\mathcal{S}^n)\times L_{\mathcal{F},p}^2([t,T];\mathcal{S}^n)\times F_p^2([t,T]\times\mathcal{E};\mathcal{S}^n)$  be the solution to adjoint BSDEPs (\ref{first-order adjoint equation}), (\ref{second-order adjoint equation}), respectively. Then
\begin{equation}
\begin{aligned}
 &H\left(s,\bar{X}_s,\bar{Y}_s,\bar{Z}_s,\bar{\tilde{Z}}_{(s,\cdot)};p_s,q_s,P_s,u\right)- H\left(s,\bar{X}_s,\bar{Y}_s,\bar{Z}_s,\bar{\tilde{Z}}_{(s,\cdot)};p_s,q_s,P_s,\bar{u}_s\right)\geqslant 0,\\
 &\qquad\qquad\qquad\qquad \text {for all } u\in\mathbf{U},\quad \text { a.e. } s \in[t, T],\quad \mathbf{P}\text{-a.s.}.
\end{aligned}
 \label{general MP maximum condition}
\end{equation}
\end{mythm}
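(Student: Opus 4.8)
The plan is to derive the maximum condition (\ref{general MP maximum condition}) for the new Hamiltonian $H$ defined in (\ref{WangShi's Hamilton function}) by transporting the general MP of \cite{ZhengShi2023}, which is phrased through $H^*$ in (\ref{ZhengShi's Hamilton function: special}), to $H$ via the pointwise relations (\ref{relation of adjoint equation}). First I would invoke the reduction spelled out above: setting $\sigma_2,f_2,z_\cdot^{2,u},\tilde z_{(\cdot,\cdot)}^{2,u}\equiv0$, $b_2,\sigma_3,f_3\equiv0$, $l,\Phi\equiv0$ and $\Gamma(y_0^u)\equiv y_0^u$ in \cite{ZhengShi2023} turns their partially observed problem into {\bf Problem (SROCPJ)} with dynamics (\ref{FBSDEP equation}) and cost (\ref{cost functional}). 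Applying their Theorem 3.6, now in the $\mathbf{R}^n$-valued setting for the state and the adjoint processes, along the optimal quadruple then yields, for all $u\in\mathbf{U}$, a.e. $s\in[t,T]$, $\mathbf{P}$-a.s., the variational inequality $H^*(s,\bar X_s,\bar Y_s,\bar Z_s,\bar{\tilde Z}_{(s,\cdot)};p_s,h_s,p^*_s,q^*_s,P^*_s,u)\geqslant H^*(s,\bar X_s,\bar Y_s,\bar Z_s,\bar{\tilde Z}_{(s,\cdot)};p_s,h_s,p^*_s,q^*_s,P^*_s,\bar u_s)$, where $(h_\cdot,p^*_\cdot,q^*_\cdot,\tilde q^*_{(\cdot,\cdot)})$ and $(P^*_\cdot,Q^*_\cdot,\tilde Q^*_{(\cdot,\cdot)})$ solve the dual systems (\ref{first-order dual equation}) and (\ref{second-order dual equation}).

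The heart of the argument is to establish the relations (\ref{relation of adjoint equation}) between these dual processes and the solutions $(p_\cdot,q_\cdot,\tilde q_{(\cdot,\cdot)})$, $(P_\cdot,Q_\cdot,\tilde Q_{(\cdot,\cdot)})$ of our adjoint BSDEPs (\ref{first-order adjoint equation}) and (\ref{second-order adjoint equation}). I would proceed by a guess-and-verify argument relying on uniqueness: define the candidates $\hat p^*_s:=h_sp_s$, $\hat q^*_s:=h_s\bar g_z(s)p_s+h_sq_s$, $\hat{\tilde q}^*_{(s,e)}:=h_s\bar g_{\tilde z}(s)p_s+h_s\tilde q_{(s,e)}+h_s\bar g_{\tilde z}(s)\tilde q_{(s,e)}$, together with the matrix analogues $\hat P^*_s:=h_sP_s$, $\hat Q^*_s:=h_s\bar g_z(s)P_s+h_sQ_s$ and $\hat{\tilde Q}^*_{(s,e)}:=h_s\bar g_{\tilde z}(s)P_s+h_s\tilde Q_{(s,e)}+h_s\bar g_{\tilde z}(s)\tilde Q_{(s,e)}$, and then apply the It\^o product formula for jump-diffusions to $h_sp_s$ and $h_sP_s$, using the generators of (\ref{first-order adjoint equation}), (\ref{second-order adjoint equation}) and the dynamics $dh_s=h_s\bar g_y(s)ds+h_s\bar g_z(s)dW_s+\int_{\mathcal{E}}h_s\bar g_{\tilde z}(s)\tilde N(de,ds)$. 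Collecting the $ds$-, $dW_s$- and $\tilde N(de,ds)$-parts, I expect the drifts to coincide exactly with the generators of (\ref{first-order dual equation}), (\ref{second-order dual equation}) and the terminal data to match $h_T\phi_x(\bar X_T)$ and $h_T\phi_{xx}(\bar X_T)$; uniqueness of solutions to these linear FBSDEP/BSDEPs under (H1)--(H3) then forces $\hat p^*=p^*$, $\hat P^*=P^*$, and so on, which is precisely (\ref{relation of adjoint equation}). Substituting $p^*_s=h_sp_s$, $q^*_s=h_s\bar g_z(s)p_s+h_sq_s$ and $P^*_s=h_sP_s$ into (\ref{ZhengShi's Hamilton function: special})---whereby $-h_s\bar g_z(s)p_s+q^*_s=h_sq_s$---and factoring out $h_s$ then yields the scalar identity $H^*(s,\cdot;u)=h_sH(s,\cdot;u)$ for every $u\in\mathbf{U}$.

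The final step is to divide out $h_s$. Since $h$ solves the linear SDEP in (\ref{first-order dual equation}) with $h_t=1$, it is a stochastic (Dol\'eans-Dade) exponential and hence strictly positive $\mathbf{P}$-a.s., its multiplicative jump factors $1+\bar g_{\tilde z}(s,e)$ being positive under the standard structural condition on $g_{\tilde z}$ that underlies the comparison theorem for BSDEPs and the well-posedness of $V$ as an infimum. Because $h_s$ does not depend on $u$, the $H^*$-inequality reads $h_s\big[H(s,\bar X_s,\bar Y_s,\bar Z_s,\bar{\tilde Z}_{(s,\cdot)};p_s,q_s,P_s,u)-H(s,\bar X_s,\bar Y_s,\bar Z_s,\bar{\tilde Z}_{(s,\cdot)};p_s,q_s,P_s,\bar u_s)\big]\geqslant0$, and dividing by $h_s>0$ gives (\ref{general MP maximum condition}).

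The main obstacle will be the It\^o bookkeeping of the second step, in particular the jump cross-terms. Applying the product rule to $h_sp_s$ and $h_sP_s$ produces, beyond the continuous cross-variation $h_s\bar g_z(s)q_s$, the compensated-measure contributions $h_s\bar g_{\tilde z}(s)\tilde q_{(s,e)}$ and $h_s\bar g_{\tilde z}(s)\tilde Q_{(s,e)}$ stemming from the joint jumps $\Delta h\,\Delta p$ and $\Delta h\,\Delta P$; these are exactly the nonlinear terms appearing in the third and sixth lines of (\ref{relation of adjoint equation}) and, after compensation, the source of the $\bar g_{\tilde z}(s)\bar f_x(s,e)$- and $\bar g_{\tilde z}(s)\tilde Q_{(s,e)}$-type entries in the generator of (\ref{second-order dual equation}). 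Matching every such term against its counterpart in (\ref{first-order dual equation}) and (\ref{second-order dual equation})---rather than against the $\mathbb{E}[\,\cdot\,|\,\mathcal{P}\otimes\mathcal{B}(\mathcal{E})]$-averaged versions that arise in the progressive setting of \cite{ZhengShi2023}---is where the computation is delicate and must be carried out carefully.
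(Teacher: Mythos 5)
Your proposal is correct and follows essentially the same route as the paper: the paper's proof consists precisely of invoking Theorem 3.6 of \cite{ZhengShi2023} (after the stated reduction to the completely observed recursive case) and then passing from $H^*$ to $H$ via the relations (\ref{relation of adjoint equation}), which the paper asserts in Remark 2.3 without proof. Your additional details---verifying (\ref{relation of adjoint equation}) by the It\^o product rule applied to $h_sp_s$ and $h_sP_s$ together with uniqueness of the linear adjoint equations, and noting that dividing the $H^*$-inequality by $h_s$ requires the strict positivity of the Dol\'eans-Dade exponential $h$ (hence $1+\bar g_{\tilde z}>0$)---only make explicit steps the paper leaves implicit.
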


\begin{proof}
From their theorem 3.6 of $\cite{ZhengShi2023}$, we get
\begin{equation*}
\begin{aligned}
 &H^*\left(s,\bar{X}_s,\bar{Y}_s,\bar{Z}_s,\bar{\tilde{Z}}_{(s,\cdot)};p_s,h_s,p_s^*,q_s^*,P_s^*,u\right)- H^*\left(s,\bar{X}_s,\bar{Y}_s,\bar{Z}_s,\bar{\tilde{Z}}_{(s,\cdot)};p_s,h_s,p_s^*,q_s^*,P_s^*,\bar{u}_s\right)\geqslant 0,\\
 &\qquad\qquad\qquad\qquad \text {for all } u\in\mathbf{U},\quad \text { a.e. } s \in[t, T],\quad \mathbf{P}\text{-a.s.}.
\end{aligned}
\end{equation*}
By (\ref{relation of adjoint equation}), we can obtain that (\ref{general MP maximum condition}) holds. The proof is complete.
\end{proof}

\section{Relationship between general MP and DPP: Smooth case }

\qquad In this section, we investigate the relationship between the above general MP (Theorem \ref{general MP}) and DPP (the generalized HJB equation (\ref{HJB equation})) for {\bf Problem (SROCPJ)}, under the assumption of a smooth value function. Specifically, we obtain the connection among the value function $V$, the generalized Hamiltonian function $G$ and adjoint variables $p, q, \tilde{q},P,Q,\tilde{Q}$. Our main result is the following theorem.

\begin{mythm}\label{smooth theorem}	
Let (H1)-(H3) hold, and $(t, x) \in[0, T) \times \mathbf{R}^n$ be fixed. Suppose that $\bar{u}_\cdot$ is an optimal control for {\bf Problem (SROCPJ)}, and $\left(\bar{X}^{t, x ; \bar{u}}_\cdot, \bar{Y}^{t, x ; \bar{u}}_\cdot, \bar{Z}^{t, x ; \bar{u}}_\cdot,\bar{\tilde{Z}}^{t, x ; \bar{u}}_{(\cdot,\cdot)}\right)\in L_{\mathcal{F}}^2([t,T];\mathbf{R}^n)\times L_{\mathcal{F}}^2([t,T];\mathbf{R})\times L_{\mathcal{F},p}^2([t,T];\mathbf{R})\times F_p^2([t,T]\times\mathcal{E};\mathbf{R})$ is the corresponding optimal trajectory. If $V \in C^{1,2}\left([0, T] \times \mathbf{R}^n\right)$, then
\begin{equation}
\begin{aligned}
V_s\left(s, \bar{X}^{t, x ; \bar{u}}_s\right)& =G\left(s, \bar{X}^{t, x ; \bar{u}}_s,-V\left(s, \bar{X}^{t, x ; \bar{u}}_s\right),-V_x\left(s, \bar{X}^{t, x ; \bar{u}}_s\right),-V_{x x}\left(s, \bar{X}^{t, x ; \bar{u}}_s\right), \bar{u}_s\right) \\
& =\max _{u \in \mathbf{U}} G\left(s, \bar{X}^{t, x ; \bar{u}}_s,-V\left(s, \bar{X}^{t, x ; \bar{u}}_s\right),-V_x\left(s, \bar{X}^{t, x ; \bar{u}}_s\right),-V_{x x}\left(s, \bar{X}^{t, x ; \bar{u}}_s\right), u\right),\\
& \hspace{3cm} \text{ a.e. } s \in[t, T],\quad \mathbf{P}\text{-a.s.},
\end{aligned}
 \label{relation of V and G}
\end{equation}
where $G$ is defined as (\ref{generalized Hamiltonian function G}). Moreover, if $V \in C^{1,3}([0, T] \times$ $\left.\mathbf{R}^n\right)$ and $V_{s x}(\cdot, \cdot)$ is continuous, then
\begin{equation}
\left\{\begin{array}{lr}
p_s=-V_x\left(s, \bar{X}^{t, x ; \bar{u}}_s\right), & \forall \  s \in[t, T],\ \mathbf{P}\text{-a.s.}, \\
q_s=-V_{x x}\left(s, \bar{X}^{t, x ; \bar{u}}_s\right) {\sigma}\left(s, \bar{X}_s^{t, x ; \bar{u}}, \bar{u}_s\right), & \text { a.e. } s \in[t, T],\ \mathbf{P}\text{-a.s.},\\
\tilde{q}_{(s,e)}=-\Big[V_x\left(s, \bar{X}^{t, x ; \bar{u}}_{s-}+f(s, \bar{X}^{t, x ; \bar{u}}_{s-},\bar{u}_s,e)\right)-V_x\left(s, \bar{X}^{t, x ; \bar{u}}_{s-}\right)\Big], & \text { a.e. } s \in[t, T],\ \mathbf{P}\text{-a.s.},
\end{array}\right.
 \label{relation of p and V}
\end{equation}
where $(p_\cdot,q_\cdot,\tilde{q}_{(\cdot,\cdot)})\in L_{\mathcal{F}}^2([t,T];\mathbf{R}^n)\times L_{\mathcal{F},p}^2([t,T];\mathbf{R}^n)\times F_p^2([t,T]\times\mathcal{E};\mathbf{R}^n)$ satisfies (\ref{first-order adjoint equation}). Furthermore, if $V \in C^{1,4}\left([0, T] \times \mathbf{R}^n\right)$ and $V_{s x x}(\cdot, \cdot)$ is continuous, then
\begin{equation}
-V_{x x}\left(s, \bar{X}_s^{t, x ; \bar{u}}\right) \geqslant P_s, \quad \forall s \in[t, T],\ \mathbf{P} \text {-a.s., }
 \label{relation of P and V}
\end{equation}
where $(P_\cdot,Q_\cdot,\tilde{Q}_{(\cdot,\cdot)})\in L_{\mathcal{F}}^2([t,T];\mathcal{S}^n)\times L_{\mathcal{F},p}^2([t,T];\mathcal{S}^n)\times F_p^2([t,T]\times\mathcal{E};\mathcal{S}^n)$ satisfies (\ref{second-order adjoint equation}).
\end{mythm}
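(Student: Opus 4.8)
The plan is to prove the three assertions in order, each time comparing a functional of the value function along the optimal trajectory with the corresponding adjoint process by means of It\^o's formula. First I would record the basic consequence of the DPP: for the optimal pair, the flow property of {\bf Problem (SROCPJ)} gives $\bar Y_s^{t,x;\bar u}=-V(s,\bar X_s^{t,x;\bar u})$ for all $s\in[t,T]$. Applying It\^o's formula to $V(s,\bar X_s)$ along (\ref{SDEP equation}) and comparing the resulting It\^o decomposition with the BSDEP (\ref{BSDEP equation}) satisfied by $\bar Y$ identifies the martingale integrands,
\[
\bar Z_s=-\langle V_x(s,\bar X_s),\bar\sigma(s)\rangle,\qquad
\bar{\tilde Z}_{(s,e)}=-\big[V(s,\bar X_{s-}+\bar f(s,e))-V(s,\bar X_{s-})\big],
\]
and forces the drift identity $V_s(s,\bar X_s)=G(s,\bar X_s,-V,-V_x,-V_{xx},\bar u_s)$, i.e.\ (\ref{generalized Hamiltonian function G}) evaluated at $\bar u_s$. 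Since $V$ solves the generalized HJB equation (\ref{HJB equation}), $V_s(s,\bar X_s)=\sup_{u\in\mathbf U}G(s,\bar X_s,-V,-V_x,-V_{xx},u)$, and the identity above shows this supremum is attained at $\bar u_s$; this yields (\ref{relation of V and G}).

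The engine for (\ref{relation of p and V}) and (\ref{relation of P and V}) is a first/second-order optimality condition extracted from (\ref{relation of V and G}). Writing $\mathcal G(s,x,u):=G(s,x,-V(s,x),-V_x(s,x),-V_{xx}(s,x),u)$, the HJB equation reads $V_s(s,x)=\sup_{u}\mathcal G(s,x,u)$ for every $x$, while (\ref{relation of V and G}) gives equality at $x=\bar X_s,\,u=\bar u_s$. Hence the map $x\mapsto \mathcal G(s,x,\bar u_s)-V_s(s,x)$ attains the interior maximum value $0$ at $x=\bar X_s$, so its gradient vanishes there, $\mathcal G_x(s,\bar X_s,\bar u_s)=V_{sx}(s,\bar X_s)$, and its Hessian is nonpositive, $\mathcal G_{xx}(s,\bar X_s,\bar u_s)\le V_{sxx}(s,\bar X_s)$ in the sense of symmetric matrices. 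For (\ref{relation of p and V}), under $V\in C^{1,3}$ and continuity of $V_{sx}$ I would apply It\^o's formula to the $\mathbf R^n$-valued process $-V_x(s,\bar X_s)$; its diffusion and jump integrands are read off at once as $-V_{xx}\bar\sigma$ and $-[V_x(s,\bar X_{s-}+\bar f)-V_x(s,\bar X_{s-})]$, which are the claimed formulas, while expanding $\mathcal G_x$ by the chain rule from (\ref{generalized Hamiltonian function G}) and inserting the first-order condition $\mathcal G_x=V_{sx}$ shows that $-V_x(s,\bar X_s)$ solves the \emph{same} first-order adjoint BSDEP (\ref{first-order adjoint equation}) as $p_s$ (the terminal values agree since $-V_x(T,\cdot)=\phi_x$). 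Uniqueness of the BSDEP solution then gives $p_s=-V_x(s,\bar X_s)$ together with the formulas for $q_s$ and $\tilde q_{(s,e)}$.

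For (\ref{relation of P and V}), under $V\in C^{1,4}$ and continuity of $V_{sxx}$ I would apply It\^o's formula to the symmetric-matrix-valued process $\Pi_s:=-V_{xx}(s,\bar X_s)$, noting the terminal match $\Pi_T=-V_{xx}(T,\bar X_T)=\phi_{xx}(\bar X_T)=P_T$. Substituting the already-established identities $p_s=-V_x$, $q_s=-V_{xx}\bar\sigma$, $\tilde q_{(s,e)}=-[V_x(s,\cdot+\bar f)-V_x]$ into the drift and using the second-order inequality $\mathcal G_{xx}\le V_{sxx}$, one finds that $\Pi$ satisfies the second-order adjoint BSDEP (\ref{second-order adjoint equation}) up to an \emph{extra nonnegative} generator term $V_{sxx}-\mathcal G_{xx}\ge 0$. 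Consequently $\Pi_s-P_s$ solves a linear matrix-valued BSDEP with jumps, with zero terminal value and a nonnegative source, and a comparison (positivity) argument for such equations yields $\Pi_s-P_s\ge 0$, i.e.\ (\ref{relation of P and V}).

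I expect the main obstacle to be this second-order step: expanding $\mathcal G_{xx}$ through the chain rule requires second derivatives of $G$ in all of its arguments, including the genuinely nonlocal jump contribution $\int_{\mathcal E}[V(\cdot+f)-V-\langle V_x,f\rangle]\nu(de)$ and the composition with $g$, and these must be reconciled term-by-term with the long list of terms in (\ref{second-order adjoint equation}). The jump terms are the delicate point---this is precisely where the missing integral $\int_{\mathcal E}\langle\tilde Q_{(r,e)}\bar f(r,e),\bar f(r,e)\rangle\nu(de)$ flagged in the introduction enters---and one must also justify the comparison/positivity property for matrix-valued BSDEPs in the presence of the $\tilde Q$ integrand. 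By contrast, the first-order step is comparatively routine once the It\^o expansion of $-V_x$ is organized around the first-order condition.
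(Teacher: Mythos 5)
Your proposal is correct and follows essentially the same route as the paper's proof: the DPP identity $\bar Y_s=-V(s,\bar X_s)$ plus It\^o/martingale comparison for (\ref{relation of V and G}), the first- and second-order maximum conditions at $x=\bar X_s$ combined with It\^o's formula applied to $-V_x$ and $-V_{xx}$, uniqueness of the first-order adjoint BSDEP for (\ref{relation of p and V}), and the matrix-valued BSDEP comparison theorem (the paper cites Zhu) for (\ref{relation of P and V}). One small note: the missing term $\int_{\mathcal E}\langle\tilde Q_{(r,e)}\bar f(r,e),\bar f(r,e)\rangle\nu(de)$ you flag pertains to the nonsmooth Theorem \ref{nonsmooth time theorem}, not to this smooth case, but this does not affect your argument.
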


\begin{proof}
By the generalized DPP (see Theorem 3.1 of \cite{LiPeng2009}, or Theorem 1 of \cite{MoonBasar2022}), it is easy to obtain that
\begin{equation}
\begin{aligned}
V\left(s, \bar{X}_s^{t, x ; \bar{u}}\right) & =-\bar{Y}_s^{t, x ; \bar{u}}
 =-\mathbf{E}\left[\int_s^T \bar{g}(r) d r+\phi\left(\bar{X}_T^{t, x ; \bar{u}}\right) \Big| \mathcal{F}_s^t\right],\ \forall s \in[t, T],\ \mathbf{P} \text {-a.s.. }
\end{aligned}
 \label{martingale of V}
\end{equation}
In fact, because
$$
\begin{aligned}
V(t, x) & =J(t, x ; \bar{u}_\cdot)=-\bar{Y}_t^{t, x ; \bar{u}}
 =-\mathbf{E}\left\{\int_t^s \bar{g}(r) d r+\mathbf{E}\left[\int_s^T \bar{g}(r) d r+\phi\left(\bar{X}_T^{t, x ; \bar{u}}\right) \Big| \mathcal{F}_s^t\right]\right\} \\
& =-\mathbf{E} \int_t^s \bar{g}(r) d r+\mathbf{E} J\left(s, \bar{X}_s^{t, x ; \bar{u}} ; \bar{u}_\cdot \right)
 \geqslant-\mathbf{E} \int_t^s \bar{g}(r) d r+\mathbf{E} V\left(s, \bar{X}_s^{t, x ; \bar{u}}\right) \geqslant V(t, x),
\end{aligned}
$$
where the last inequality is due to the property of backward semigroup (see Theorem 3.1 of \cite{LiPeng2009} or (7) of \cite{MoonBasar2022}), noting that $t \in[0, T)$ is fixed), and all the inequalities in the aforementioned become equalities. In particular,
$$
\mathbf{E} J\left(s, \bar{X}_s^{t, x ; \bar{u}} ; \bar{u}_\cdot \right)=\mathbf{E} V\left(s, \bar{X}_s^{t, x ; \bar{u}}\right) .
$$
However, by definition $V\left(s, \bar{X}_s^{t, x ; \bar{u}}\right) \leqslant J\left(s, \bar{X}_s^{t, x ; \bar{u}} ; \bar{u}_\cdot \right)$,\ $\mathbf{P}$-a.s. Thus,
$$
V\left(s, \bar{X}_s^{t, x ; \bar{u}}\right)=J\left(s, \bar{X}_s^{t, x ; \bar{u}} ; \bar{u}_\cdot \right), \ \mathbf{P}\text{-a.s.} ,
$$
which gives ($\ref{martingale of V}$). For $s \in[t, T]$, define
$$
m_s:=-\mathbf{E}\left[\int_t^T \bar{g}(r) d r+\phi\left(\bar{X}_T^{t, x ; \bar{u}}\right) \mid \mathcal{F}_s^t\right].
$$
Clearly, $m_\cdot$ is a square integrable $\mathcal{F}_s^t$-martingale. Thus, by the martingale representation theorem (see their Lemma 2.3 of \cite{TangLi1994}), there exists unique $M_\cdot$ and $\tilde{M}_{(\cdot,\cdot)}$ such that
$$
m_s=m_t+\int_t^s M_r d W_r+ \int_t^s \int_{\mathcal{E}}\tilde{M}_{(r,e)} \tilde{N}(d e, d r).
$$
So for $s \in[t, T]$,
$$
m_s=m_T-\int_s^T M_r d W_r- \int_s^T \int_{\mathcal{E}}\tilde{M}_{(r,e)} \tilde{N}(d e,d r).
$$
Then, by ($\ref{martingale of V}$), we have for $s \in[t, T]$,
\begin{equation}
\begin{aligned}
V\left(s, \bar{X}_s^{t, x ; \bar{u}}\right) & =m_s+\int_t^s \bar{g}(r) d r
 =m_T+\int_t^s \bar{g}(r) d r-\int_s^T M_r d W_r- \int_s^T\int_{\mathcal{E}} \tilde{M}_{(r,e)} \tilde{N}(d e,d r) \\
& =-\int_s^T \bar{g}(r) d r-\int_s^T M_r d W_r- \int_s^T\int_{\mathcal{E}} \tilde{M}_{(r,e)} \tilde{N}(d e,d r)+V\left(T, \bar{X}_T^{t, x ; \bar{u}}\right).
\end{aligned}
 \label{martingale representation of V}
\end{equation}
Then, applying It\^{o}'s formula of jump diffusions (see, for example, \cite{OksendalSulem2006}) to $V\left(s, \bar{X}_s^{t, x ; \bar{u}}\right)$, we have
\begin{equation}
\begin{aligned}
dV\left(s, \bar{X}_s^{t,x;\bar{u}}\right)& =\bigg\{V_s\left(s, \bar{X}_s^{t,x; \bar{u}}\right)+\left\langle V_x\left(s, \bar{X}_s^{t,x;\bar{u}}\right), \bar{b}(s)\right\rangle+\frac{1}{2} \operatorname{tr}\left(\bar{\sigma}(s)^\top V_{x x}\left(s, \bar{X}_s^{t,x;\bar{u}} \right) \bar{\sigma}(s)\right) \\
&\quad +\int_{\mathcal{E}}\Big[V\left(s, \bar{X}_s^{t,x; \bar{u}}+f(s, \bar{X}_{s}^{t,x; \bar{u}}, \bar{u}_s, e)\right)-V\left(s, \bar{X}_s^{t,x; \bar{u}}\right) \\
&\quad -\left\langle V_{x}(s, \bar{X}_s^{t,x; \bar{u}}),f(s, \bar{X}_{s}^{t,x; \bar{u}}, \bar{u}_s, e)\right\rangle \Big]\nu(de) \bigg\}ds+\bar{\sigma}(s)^\top V_x\left(s, \bar{X}_s^{t,x;\bar{u}}\right) d W_s \\
&\quad +\int_{\mathcal{E}}\left[V\left(s, \bar{X}_{s-}^{t, x; \bar{u}}+f(s, \bar{X}_{s-}^{t,x; \bar{u}}, \bar{u}_s, e)\right)-V\left(s, \bar{X}_{s-}^{t,x; \bar{u}}\right)\right] \tilde{N}(d e, d s).
\end{aligned}
 \label{ito for V}
\end{equation}
\indent Comparing this with (\ref{martingale representation of V}), we conclude that, for all $s\in[t,T]$,
\begin{equation}
\left\{\begin{aligned}
& V_s\left(s, \bar{X}_s^{t,x; \bar{u}}\right)+\left\langle V_x\left(s, \bar{X}_s^{t,x;\bar{u}}\right), \bar{b}(s)\right\rangle+\frac{1}{2} \operatorname{tr}\left(\bar{\sigma}(s)^\top V_{x x}\left(s, \bar{X}_s^{t,x;\bar{u}} \right) \bar{\sigma}(s)\right)\\
& +\int_{\mathcal{E}}\Big[V\left(s, \bar{X}_s^{t,x; \bar{u}}+f(s, \bar{X}_{s}^{t,x; \bar{u}}, \bar{u}_s, e)\right)-V\left(s, \bar{X}_s^{t,x; \bar{u}}\right)\\
&\quad -\left\langle V_{x}(s, \bar{X}_s^{t,x; \bar{u}}),f(s, \bar{X}_{s}^{t,x; \bar{u}}, \bar{u}_s, e)\right\rangle \Big]\nu(de)\\
&=\bar{g}(s)\equiv g\left(s,\bar{X}_s^{t,x;\bar{u}},\bar{Y}_s^{t,x;\bar{u}},\bar{Z}_s^{t,x;\bar{u}},\bar{\tilde{Z}}_{(s,\cdot)}^{t,x;\bar{u}},\bar{u}_s \right),\\
& \sigma\left(s, \bar{X}_s^{t,x;\bar{u}}, \bar{u}_s\right)^\top V_x\left(s, \bar{X}_s^{t,x;\bar{u}}\right) ={M}_s,\\
& V\left(s, \bar{X}_{s-}^{t, x; \bar{u}}+f(s, \bar{X}_{s-}^{t,x; \bar{u}}, \bar{u}_s, e)\right)-V\left(s, \bar{X}_{s-}^{t,x; \bar{u}}\right)={\tilde{M}}_{(s,e)}.
\end{aligned}\right.
 \label{relationship of V and martingale}
\end{equation}

However, by the uniqueness of solution to BSDEP (\ref{BSDEP equation}), we have for all $s\in[t,T]$,
\begin{equation}
\left\{\begin{aligned}
& \bar{Y}_s^{t, x ; \bar{u}}=-V\left(s, \bar{X}_s^{t,x;\bar{u}}\right), \\
& \bar{Z}_s^{t, x ; \bar{u}}=-\sigma\left(s, \bar{X}_s^{t,x;\bar{u}}, \bar{u}_s\right)^\top V_x\left(s, \bar{X}_s^{t,x;\bar{u}}\right) , \\
& \bar{\tilde{Z}}_{(s,e)}^{t, x ; \bar{u}}=-\left[V\left(s, \bar{X}_{s-}^{t, x; \bar{u}}+f(s, \bar{X}_{s-}^{t,x; \bar{u}}, \bar{u}_s, e)\right)-V\left(s, \bar{X}_{s-}^{t,x; \bar{u}}\right)\right].
\end{aligned}\right.
 \label{relationship of V and BSDEP}
\end{equation}
It follows from (\ref{relationship of V and BSDEP}) that the first equality of (\ref{relation of V and G}) holds.

Since $V \in C^{1,2}\left([0, T] \times \mathbf{R}^n\right)$, it satisfies the generalized HJB equation (\ref{HJB equation}), which implies the second equality of ($\ref{relation of V and G}$). Also, by (\ref{HJB equation}) we have
\begin{equation}
\begin{aligned}
0=- & V_s\left(s, \bar{X}_s^{t, x ; \bar{u}}\right)+G\left(s, \bar{X}_s^{t, x ; \bar{u}},-V\left(s, \bar{X}_s^{t, x ; \bar{u}}\right),-V_x\left(s, \bar{X}_s^{t, x ; \bar{u}}\right),-V_{x x}\left(s, \bar{X}_s^{t, x ; \bar{u}}\right), \bar{u}_s\right) \\
\geqslant- & V_s(s, x)+G\left(s, x,-V(s, x),-V_x(s, x),-V_{x x}(s, x), \bar{u}_s\right), \quad \forall x \in \mathbf{R}^n .
\end{aligned}
 \label{by HJB equation}
\end{equation}
Consequently, if $V \in C^{1,3}\left([0, T] \times \mathbf{R}^n\right)$ , then we have
\begin{equation}
\begin{aligned}
\left.\frac{\partial}{\partial x} \bigg\{-V_s(s, x)+G\left(s, x,-V(s, x),-V_x(s, x),-V_{x x}(s, x), \bar{u}_s\right) \bigg\}\right|_{x=\bar{X}_s^{t, x ; \bar{u}}}=0, \ \forall s \in[t, T],
\end{aligned}
 \label{first-order maximum condition}
\end{equation}
which is the first-order maximum condition. Furthermore, if $V \in C^{1,4}([0, T] \times$ $\left.\mathbf{R}^n\right)$, the following second-order maximum condition holds:
\begin{equation}
\left.\frac{\partial^2}{\partial x^2}\bigg\{-V_s(s, x)+G\left(s, x,-V(s, x),-V_x(s, x),-V_{x x}(s, x), \bar{u}_s\right)\bigg\}\right|_{x=\bar{X}^{t, x ; \bar{u}}_s} \leqslant 0, \ \forall s \in[t, T].
 \label{second-order maximum condition}
\end{equation}
On the one hand, (\ref{first-order maximum condition}) yields that (recall (\ref{generalized Hamiltonian function G})), for all $s \in[t, T]$,
\begin{equation}
\begin{aligned}
0=& -V_{s x}\left(s, \bar{X}_s^{t, x ; \bar{u}}\right)-V_{x x}\left(s, \bar{X}_s^{t, x ; \bar{u}}\right) \bar{b}(s)-\bar{b}_x(s)^\top V_x\left(s, \bar{X}_s^{t, x ; \bar{u}}\right) \\
& -\frac{1}{2} \operatorname{tr}\left(\bar{\sigma}(s)^\top V_{x x x}\left(s, \bar{X}_s^{t, x ; \bar{u}}\right)\bar{\sigma}(s))\right)
 -\bar{\sigma}_x(s)^\top V_{x x}\left(s, \bar{X}_s^{t, x ; \bar{u}}\right) \bar{\sigma}(s)+\bar{g}_x(s) \\
& -\bar{g}_y(s) V_x\left(s, \bar{X}_s^{t, x ; \bar{u}}\right)-\bar{g}_z(s){\left[V_{x x}\left(s, \bar{X}_s^{t, x ; \bar{u}}\right) \bar{\sigma}(s)
 +\bar{\sigma}_x(s)^\top V_x\left(s, \bar{X}_s^{t, x ; \bar{u}}\right) \right] } \\
& -\bar{g}_{\tilde{z}}(s)\int_{\mathcal{E}}\Big[f_x(s,\bar{X}_{s}^{t,x; \bar{u}}, \bar{u}_s, e)^\top V_x\left(s, \bar{X}_s^{t,x; \bar{u}}+f(s, \bar{X}_{s}^{t,x; \bar{u}}, \bar{u}_s, e)\right) \\
& +V_x\left(s, \bar{X}_s^{t,x; \bar{u}}+f(s, \bar{X}_{s}^{t,x; \bar{u}}, \bar{u}_s, e)\right)-V_x\left(s, \bar{X}_{s}^{t,x; \bar{u}}\right)\Big]\nu(de) \\
& -\int_{\mathcal{E}}\Big\{V_x\left(s, \bar{X}_s^{t, x; \bar{u}}+f(s, \bar{X}_{s}^{t,x; \bar{u}}, \bar{u}_s, e)\right)-V_x\left(s, \bar{X}_s^{t,x; \bar{u}}\right)\\
& -V_{x x}\left(s, \bar{X}_s^{t,x; \bar{u}}\right)f(s, \bar{X}_{s}^{t,x; \bar{u}}, \bar{u}_s, e) \\
& +f_x(s, \bar{X}_{s}^{t,x; \bar{u}}, \bar{u}_s, e)^\top \Big[V_x\left(s, \bar{X}_s^{t, x; \bar{u}}+f(s, \bar{X}_{s}^{t,x; \bar{u}}, \bar{u}_s, e)\right)-V_x\left(s, \bar{X}_s^{t,x; \bar{u}}\right)\Big]\Big\}\nu(de).
\end{aligned}
 \label{eq:3.12}
\end{equation}
On the other hand, applying It\^{o}'s formula to $V_x\left(s, \bar{X}_s^{t, x ; \bar{u}}\right)$, we get
\begin{equation}
\begin{aligned}
&dV_x\left(s, \bar{X}_s^{t,x;\bar{u}}\right) =\bigg\{V_{x s}\left(s, \bar{X}_s^{t,x; \bar{u}}\right)+ V_{x x}\left(s, \bar{X}_s^{t,x;\bar{u}}\right) \bar{b}(s)
 +\frac{1}{2} \operatorname{tr}\left(\bar{\sigma}(s)^\top V_{x x x}\left(s, \bar{X}_s^{t,x;\bar{u}} \right) \bar{\sigma}(s)\right)\bigg. \\
&\quad +\int_{\mathcal{E}}\Big[V_x\left(s, \bar{X}_s^{t,x; \bar{u}}+f(s, \bar{X}_{s}^{t,x; \bar{u}}, \bar{u}_s, e)\right)-V_x\left(s, \bar{X}_s^{t,x; \bar{u}}\right) \\
&\quad -V_{x x}(s, \bar{X}_s^{t,x; \bar{u}}) f(s, \bar{X}_{s}^{t,x; \bar{u}}, \bar{u}_s, e)\Big]\nu(de) \bigg\}ds +V_{x x}\left(s, \bar{X}_s^{t,x;\bar{u}}\right) \sigma\left(s, \bar{X}_s^{t,x;\bar{u}}, \bar{u}_s\right) d W_s \\
&\quad +\int_{\mathcal{E}}\left[V_x\left(s, \bar{X}_{s-}^{t, x; \bar{u}}+f(s, \bar{X}_{s-}^{t,x; \bar{u}}, \bar{u}_s, e)\right)-V_x\left(s, \bar{X}_{s-}^{t,x; \bar{u}}\right)\right] \tilde{N}(d e, d s). \\
\end{aligned}
 \label{eq:3.13}
\end{equation}
Note that (\ref{eq:3.12}), we get
\begin{equation}
\begin{aligned}
&-dV_x\left(s, \bar{X}_s^{t,x;\bar{u}_s}\right) =\bigg\{\bar{b}_x(s)^\top V_x\left(s, \bar{X}_s^{t, x ; \bar{u}}\right)+\bar{\sigma}_x(s)^\top V_{x x}\left(s, \bar{X}_s^{t, x ; \bar{u}}\right) \bar{\sigma}(s)-\bar{g}_x(s) \\
&\quad +\bar{g}_y(s) V_x\left(s, \bar{X}_s^{t, x ; \bar{u}}\right)+\bar{g}_z(s)\left[V_{x x}\left(s, \bar{X}_s^{t, x ; \bar{u}}\right) \bar{\sigma}(s)
 +\bar{\sigma}_x(s)^\top V_x\left(s, \bar{X}_s^{t, x ; \bar{u}}\right) \right] \\
&\quad +\bar{g}_{\tilde{z}}(s)\int_{\mathcal{E}}\Big[V_x\left(s, \bar{X}_s^{t, x; \bar{u}}+f(s, \bar{X}_{s}^{t,x; \bar{u}}, \bar{u}_s, e)\right)-V_x\left(s, \bar{X}_s^{t,x; \bar{u}}\right) \\
&\quad +f_x(s, \bar{X}_{s}^{t,x; \bar{u}}, \bar{u}_s, e)^\top V_x\left(s, \bar{X}_s^{t, x; \bar{u}}+f(s, \bar{X}_{s}^{t,x; \bar{u}}, \bar{u}_s, e)\right)\Big]\nu(de)\bigg. \\
&\quad +\int_{\mathcal{E}}f_x(s, \bar{X}_{s}^{t,x; \bar{u}}, \bar{u}_s, e)^\top \left[V_x\left(s, \bar{X}_s^{t, x; \bar{u}}+f(s, \bar{X}_{s}^{t,x; \bar{u}}, \bar{u}_s, e)\right)
 -V_x\left(s, \bar{X}_s^{t,x; \bar{u}}\right)\right]\nu(de)\bigg\}ds \\
&\quad +V_{x x}\left(s, \bar{X}_s^{t,x;\bar{u}}\right) \sigma\left(s, \bar{X}_s^{t,x;\bar{u}}, \bar{u}_s\right) d W_s \\
&\quad +\int_{\mathcal{E}}\left[V_x\left(s, \bar{X}_{s-}^{t, x; \bar{u}}+f(s, \bar{X}_{s-}^{t,x; \bar{u}}, \bar{u}_s, e)\right)-V_x\left(s, \bar{X}_{s-}^{t,x; \bar{u}}\right)\right] \tilde{N}(d e, d s).
\end{aligned}
 \label{eq:3.14}
\end{equation}
Note that $V$ solves (\ref{HJB equation}), and thus $V_x\left(T, \bar{X}^{t, x ; \bar{u}}_T\right)=-\phi_x\left(\bar{X}^{t, x ; \bar{u}}_T\right)$. Then by the uniqueness of the solutions to (\ref{first-order adjoint equation}), we obtain (\ref{relation of p and V}).

Moreover, (\ref{second-order maximum condition}) yields that, for any $s \in[t, T]$,
\begin{equation*}
\begin{aligned}
& -V_{s x x}\left(s, \bar{X}_s^{t, x ; \bar{u}}\right)-V_{x x x}\left(s, \bar{X}_s^{t, x ; \bar{u}}\right) \bar{b}(s)-V_{x x}\left(s, \bar{X}_s^{t, x ; \bar{u}}\right) \bar{b}_x(s)
 -\bar{b}_{x x}(s)^\top V_x\left(s, \bar{X}_s^{t, x ; \bar{u}}\right) \\
& -\bar{b}_{x}(s)^\top V_{x x}\left(s, \bar{X}_s^{t, x ; \bar{u}}\right)-\frac{1}{2} \operatorname{tr}\left(\bar{\sigma}(s)^\top V_{x x x x}\left(s, \bar{X}_s^{t, x ; \bar{u}}\right)\bar{\sigma}(s)\right) \\
& -2\bar{\sigma}_x(s)^\top V_{x x x}\left(s, \bar{X}_s^{t, x ; \bar{u}}\right) \bar{\sigma}(s)-\bar{\sigma}_{x x}(s)^\top V_{x x}\left(s, \bar{X}_s^{t, x ; \bar{u}}\right) \bar{\sigma}(s)
 -\bar{\sigma}_x(s)^\top V_{x x}\left(s, \bar{X}_s^{t, x ; \bar{u}}\right) \bar{\sigma}_x(s) \\
& -\bar{g}_y(s) V_{x x}\left(s, \bar{X}_s^{t, x ; \bar{u}}\right)-\bar{g}_z(s)\bar{\sigma}_x(s)^\top V_{x x}\left(s, \bar{X}_s^{t, x ; \bar{u}}\right)
 -\bar{g}_z(s)V_{x x}\left(s, \bar{X}_s^{t, x ; \bar{u}}\right)\bar{\sigma}_x(s)\\
& -\bar{g}_z(s)\bar{\sigma}_{x x}(s)^\top V_x\left(s, \bar{X}_s^{t, x ; \bar{u}}\right)-\bar{g}_z(s)V_{x x x}\left(s, \bar{X}_s^{t, x ; \bar{u}}\right)\bar{\sigma}(s) \\
& +\bigg[I_{n\times n},-V_x\left(s, \bar{X}^{t, x ; \bar{u}}_s\right),-\bar{\sigma}_x(s)^\top V_x\left(s, \bar{X}^{t, x ; \bar{u}}_s\right)-V_{x x}\left(s, \bar{X}^{t, x ; \bar{u}}_s\right) \bar{\sigma}(s), \\
&\quad \int_{\mathcal{E}}\left[-f_x(s, \bar{X}_{s}^{t,x; \bar{u}}, \bar{u}_s, e)^\top V_x\left(s, \bar{X}^{t, x ; \bar{u}}_s\right)
 -\big[V_x\left(s, \bar{X}^{t, x ; \bar{u}}_s+f(s, \bar{X}^{t, x ; \bar{u}}_{s},\bar{u}_s,e)\right)-V_x\left(s, \bar{X}^{t, x ; \bar{u}}_s\right)\big]\right. \\
&\quad \left.-f_x(s, \bar{X}_{s}^{t,x; \bar{u}}, \bar{u}_s, e)^\top \big[V_x\left(s, \bar{X}^{t, x ; \bar{u}}_s+f(s, \bar{X}^{t, x ; \bar{u}}_{s},\bar{u}_s,e)\right)
 -V_x\left(s, \bar{X}^{t, x ; \bar{u}}_s\right)\big]\right]\nu(de)\bigg]D^{2}\bar{g}(s) \\
\end{aligned}
\end{equation*}
\begin{equation}
\begin{aligned}
&\quad \cdot\bigg[I_{n\times n},-V_x\left(s, \bar{X}^{t, x ; \bar{u}}_s\right),-\bar{\sigma}_x(s)^\top V_x\left(s, \bar{X}^{t, x ; \bar{u}}_s\right)-V_{x x}\left(s, \bar{X}^{t, x ; \bar{u}}_s\right) \bar{\sigma}(s), \\
&\quad \int_{\mathcal{E}}\left[-f_x(s, \bar{X}_{s}^{t,x; \bar{u}}, \bar{u}_s, e)^\top V_x\left(s, \bar{X}^{t, x ; \bar{u}}_s\right)
 -\big[V_x\left(s, \bar{X}^{t, x ; \bar{u}}_s+f(s, \bar{X}^{t, x ; \bar{u}}_{s},\bar{u}_s,e)\right)-V_x\left(s, \bar{X}^{t, x ; \bar{u}}_s\right)\big]\right. \\
&\quad \left.-f_x(s, \bar{X}_{s}^{t,x; \bar{u}}, \bar{u}_s, e)^\top \big[V_x\left(s, \bar{X}^{t, x ; \bar{u}}_s+f(s, \bar{X}^{t, x ; \bar{u}}_{s},\bar{u}_s,e)\right)
 -V_x\left(s, \bar{X}^{t, x ; \bar{u}}_s\right)\big]\right]\nu(de)\bigg]^\top \\
& -\int_{\mathcal{E}}\bigg\{\bar{g}_{\tilde{z}}(s) f_{x x}(s, \bar{X}_{s}^{t,x; \bar{u}}, \bar{u}_s, e)^\top V_x\left(s, \bar{X}^{t, x ; \bar{u}}_s+f(s, \bar{X}^{t, x ; \bar{u}}_{s},\bar{u}_s,e)\right) \\
& +\bar{g}_{\tilde{z}}(s) f_x(s, \bar{X}_{s}^{t,x; \bar{u}}, \bar{u}_s, e)^\top V_{x x}\left(s, \bar{X}^{t, x ; \bar{u}}_s+f(s, \bar{X}^{t, x ; \bar{u}}_{s},\bar{u}_s,e)\right) \\
& +\bar{g}_{\tilde{z}}(s)  V_{x x}\left(s, \bar{X}^{t, x ; \bar{u}}_s+f(s, \bar{X}^{t, x ; \bar{u}}_{s},\bar{u}_s,e)\right)f_x(s, \bar{X}_{s}^{t,x; \bar{u}}, \bar{u}_s, e) \\
& +\bar{g}_{\tilde{z}}(s) f_x(s, \bar{X}_{s}^{t,x; \bar{u}}, \bar{u}_s, e)^\top V_{x x}\left(s, \bar{X}^{t, x ; \bar{u}}_s
 +f(s, \bar{X}^{t, x ; \bar{u}}_{s},\bar{u}_s,e)\right) f_x(s, \bar{X}_{s}^{t,x; \bar{u}}, \bar{u}_s, e) \\
& +\bar{g}_{\tilde{z}}(s)\left[V_{x x}\left(s, \bar{X}_s^{t, x; \bar{u}}+f(s, \bar{X}_{s}^{t,x; \bar{u}}, \bar{u}_s, e)\right)-V_{x x}\left(s, \bar{X}_s^{t,x; \bar{u}}\right)\right] \\
& +V_{x x}\left(s, \bar{X}_s^{t, x; \bar{u}}+f(s, \bar{X}_{s}^{t,x; \bar{u}}, \bar{u}_s, e)\right)-V_{x x}\left(s, \bar{X}_s^{t,x; \bar{u}}\right)
 -V_{x x x}\left(s, \bar{X}_s^{t,x; \bar{u}}\right)f(s, \bar{X}_{s}^{t,x; \bar{u}}, \bar{u}_s, e) \\
& +f_{x x}(s, \bar{X}_{s}^{t,x; \bar{u}}, \bar{u}_s, e)^\top \left[V_x\left(s, \bar{X}_s^{t, x; \bar{u}}+f(s, \bar{X}_{s}^{t,x; \bar{u}}, \bar{u}_s, e)\right)-V_x\left(s, \bar{X}_s^{t,x; \bar{u}}\right)\right] \\
& +f_x(s, \bar{X}_{s}^{t,x; \bar{u}}, \bar{u}_s, e)^\top \left[V_{x x}\left(s, \bar{X}_s^{t, x; \bar{u}}+f(s, \bar{X}_{s}^{t,x; \bar{u}}, \bar{u}_s, e)\right)-V_{x x}\left(s, \bar{X}_s^{t,x; \bar{u}}\right)\right] \\
& +\left[V_{x x}\left(s, \bar{X}_s^{t, x; \bar{u}}+f(s, \bar{X}_{s}^{t,x; \bar{u}}, \bar{u}_s, e)\right)-V_{x x}\left(s, \bar{X}_s^{t,x; \bar{u}}\right)\right]f_x(s, \bar{X}_{s}^{t,x; \bar{u}}, \bar{u}_s, e) \\
& +f_x(s, \bar{X}_{s}^{t,x; \bar{u}}, \bar{u}_s, e)^\top V_{x x}\left(s, \bar{X}_s^{t, x; \bar{u}}+f(s, \bar{X}_{s}^{t,x; \bar{u}}, \bar{u}_s, e)\right)f_x(s, \bar{X}_{s}^{t,x; \bar{u}}, \bar{u}_s, e)\bigg\}\nu(de)\leq0.
\end{aligned}
 \label{eq:3.15}
\end{equation}
In the above and what follows, the notation of partial derivatives has its own definitions which we will not clarify on by one, because of limited space. (For simplicity, we can verify the calculus just using $n=1$, i.e., $x$ is one-dimensional.)

Applying It\^{o}'s formula again, to $V_{x x}\left(s, \bar{X}_s^{t, x ; \bar{u}}\right)$, we obtain
\begin{equation}
\begin{aligned}
&dV_{x x}\left(s, \bar{X}_s^{t,x;\bar{u}}\right) =\bigg\{V_{xx  s}\left(s, \bar{X}_s^{t,x; \bar{u}}\right)+ V_{x x x}\left(s, \bar{X}_s^{t,x;\bar{u}}\right) \bar{b}(s)
 +\frac{1}{2} \operatorname{tr}\left(\bar{\sigma}(s)^\top V_{x x x x}\left(s, \bar{X}_s^{t,x;\bar{u}} \right) \bar{\sigma}(s)\right) \\
&\quad +\int_{\mathcal{E}}\Big[V_{x x}\left(s, \bar{X}_s^{t,x; \bar{u}}+f(s, \bar{X}_{s}^{t,x; \bar{u}}, \bar{u}_s, e)\right)-V_{x x}\left(s, \bar{X}_s^{t,x; \bar{u}}\right) \\
&\quad -V_{x x x}(s, \bar{X}_s^{t,x; \bar{u}}) f(s, \bar{X}_{s}^{t,x; \bar{u}}, \bar{u}_s, e)\Big]\nu(de) \bigg\}ds
 +V_{x x x}\left(s, \bar{X}_s^{t,x;\bar{u}}\right) \sigma\left(s, \bar{X}_s^{t,x;\bar{u}}, \bar{u}_s\right) d W_s \\
&\quad +\int_{\mathcal{E}}\left[V_{x x}\left(s, \bar{X}_{s-}^{t, x; \bar{u}}+f(s, \bar{X}_{s-}^{t,x; \bar{u}}, \bar{u}_s, e)\right)-V_{x x}\left(s, \bar{X}_{s-}^{t,x; \bar{u}}\right)\right] \tilde{N}(d e, d s). \\
\end{aligned}
 \label{eq:3.16}
\end{equation}
For all $s\in[t,T]$, define
$$
\begin{aligned}
& \mathcal{P}_s:=-V_{x x}\left(s, \bar{X}_s^{t, x ; \bar{u}}\right), \\
& \mathcal{Q}_s:=-V_{x x x}\left(s, \bar{X}_s^{t, x ; \bar{u}}\right) \bar{\sigma}(s), \\
& \tilde{\mathcal{Q}}_{(s,e)}:=-\left[V_{x x}\left(s, \bar{X}_{s-}^{t, x; \bar{u}}+f(s, \bar{X}_{s-}^{t,x; \bar{u}}, \bar{u}_s, e)\right)-V_{x x}\left(s, \bar{X}_{s-}^{t,x; \bar{u}}\right)\right],
\end{aligned}
$$
and we will have
\begin{equation}
\begin{aligned}
-d \mathcal{P}_s=& \bigg\{V_{xx  s}\left(s, \bar{X}_s^{t,x; \bar{u}}\right)+ V_{x x x}\left(s, \bar{X}_s^{t,x;\bar{u}}\right) \bar{b}(s)
 +\frac{1}{2} \operatorname{tr}\left(\bar{\sigma}(s)^\top V_{x x x x}\left(s, \bar{X}_s^{t,x;\bar{u}} \right) \bar{\sigma}(s)\right)\bigg. \\
& +\int_{\mathcal{E}}\Big[V_{x x}\left(s, \bar{X}_s^{t,x; \bar{u}}+f(s, \bar{X}_{s}^{t,x; \bar{u}}, \bar{u}_s, e)\right)-V_{x x}\left(s, \bar{X}_s^{t,x; \bar{u}}\right) \\
& -V_{x x x}(s, \bar{X}_s^{t,x; \bar{u}}) f(s, \bar{X}_{s}^{t,x; \bar{u}}, \bar{u}_s, e)\Big]\nu(de) \bigg\}ds
 -\mathcal{Q}_s d W_s -\int_{\mathcal{E}}\tilde{\mathcal{Q}}_{(s,e)} \tilde{N}(d e, d s).
\end{aligned}
 \label{BSDEP with P}
\end{equation}

From ($\ref{eq:3.15}$) and the continuity of $V_{s x x}(\cdot, \cdot)$, as well as (\ref{relation of p and V}), we have
\begin{equation}
\begin{aligned}
&\quad V_{s x x}\left(s, \bar{X}_s^{t, x ; \bar{u}}\right)+V_{x x x}\left(s, \bar{X}_s^{t, x ; \bar{u}}\right) \bar{b}(s)
 +\frac{1}{2} \operatorname{tr}\left(\bar{\sigma}(s)^\top V_{x x x x}\left(s, \bar{X}_s^{t, x ; \bar{u}}\right)\bar{\sigma}(s)\right) \\
&\quad +\int_{\mathcal{E}}\Big\{V_{x x}\left(s, \bar{X}_s^{t, x; \bar{u}}+f(s, \bar{X}_{s}^{t,x; \bar{u}}, \bar{u}_s, e\right)-V_{x x}\left(s, \bar{X}_s^{t,x; \bar{u}}\right)\\
&\quad -V_{x x x}\left(s, \bar{X}_s^{t,x; \bar{u}}\right)f(s, \bar{X}_{s}^{t,x; \bar{u}}, \bar{u}_s, e)\Big\}\nu(de) \\
& \geqslant \mathcal{P}_s \bar{b}_x(s)+\bar{b}_x(s)^\top \mathcal{P}_s+\bar{b}_{x x}(s)^\top p_s+\bar{\sigma}_x(s)^\top \mathcal{Q}_s+\mathcal{Q}_s \bar{\sigma}_x(s)+\bar{\sigma}_{x x}(s)^\top q_s \\
&\quad +\bar{\sigma}_x(s)^\top \mathcal{P}_s\bar{\sigma}_x(s)+\bar{g}_y(s)\mathcal{P}_s+\bar{g}_z(s)\bar{\sigma}_x(s)^\top \mathcal{P}_s+\bar{g}_z(s)\mathcal{P}_s\bar{\sigma}_x(s) \\
&\quad +\bar{g}_z(s)\bar{\sigma}_{x x}(s)^\top p_s+\bar{g}_z(s)\mathcal{Q}_s+\Psi(s) D^2\bar{g}(s) \Psi(s)^\top +\int_{\mathcal{E}}\Big\{\bar{f}_x(s,e)^\top \tilde{\mathcal{Q}}_{(s,e)}\\
&\quad +\tilde{\mathcal{Q}}_{(s,e)}\bar{f}_x(s,e)+\bar{f}_x(s,e)^\top \tilde{\mathcal{Q}}_{(s,e)}\bar{f}_x(s,e)+\bar{f}_x(s,e)^\top \mathcal{P}_s\bar{f}_x(s,e)+\bar{f}_{x x}(s,e)^\top \tilde{q}_{(s,e)} \\
&\quad +\bar{f}_{x x}(s,e)^\top \bar{g}_{\tilde{z}}(s) p_s+\bar{f}_{x x}(s,e)^\top \bar{g}_{\tilde{z}}(s) \tilde{q}_{(s,e)}+\bar{g}_{\tilde{z}}(s)\bar{f}_x(s,e)^\top \mathcal{P}_s+\bar{g}_{\tilde{z}}(s)\mathcal{P}_s\bar{f}_x(s,e) \\
&\quad +\bar{g}_{\tilde{z}}(s)\bar{f}_x(s,e)^\top \tilde{\mathcal{Q}}_{(s,e)}+\bar{g}_{\tilde{z}}(s)\tilde{\mathcal{Q}}_{(s,e)}\bar{f}_x(s,e)+\bar{g}_{\tilde{z}}(s)\bar{f}_x(s,e)^\top \tilde{\mathcal{Q}}_{(s,e)}\bar{f}_x(s,e) \\
&\quad +\bar{g}_{\tilde{z}}(s)\bar{f}_x(s,e)^\top\mathcal{P}_s\bar{f}_x(s,e)+\bar{g}_{\tilde{z}}(s)\tilde{\mathcal{Q}}_{(s,e)}\Big\}\nu(de).
\end{aligned}
 \label{eq:3.18}
\end{equation}
Note that $P_T=\mathcal{P}_T=-V_{x x}\left(T, \bar{X}_T^{t, x ; \bar{u}}\right)=\phi_{x x}\left(\bar{X}_T^{t, x ; \bar{u}}\right)$. Using the above relation, one can applying the comparison theorem (see Zhu \cite{Zhu2010}) for the matrix-valued BSDEPs (\ref{second-order adjoint equation}) and (\ref{BSDEP with P}), and thus we obtain (\ref{relation of P and V}). The proof is complete.
\end{proof}

\begin{Remark}
Theorem \ref{smooth theorem} can cover the existing results in some special cases. If we do not contain the random jumps, i.e., $f\equiv0$, Theorem \ref{smooth theorem} reduces to Corollary 3.4 of \cite{NieShiWu2017}. Moreover, by  (\ref{relation of adjoint equation}), we can fill the gap in Remark 3.1 of \cite{Shi2014}.
\end{Remark}

\section{Relationship between general MP and DPP: Nonsmooth case}

\qquad In this section, we investigate the relationship between the above general MP and DPP for {\bf Problem (SROCPJ)}, where the value function is not smooth enough. To make this paper self-contained, we will present the definitions of viscosity solutions and semi-jets. At first, we give some lemmas.

\subsection{Some preliminary results}

\begin{mylem}\label{Lemma 4.1}
(\cite{ShiWu2011}) Let (H1) hold. For all $k=2,4$, there exists $C>0$ such that for any $0 \leqslant t, \hat{t} \leqslant T, x, \hat{x} \in \mathbf{R}^n, u_\cdot \in \mathcal{U}^w[s, T]$,
$$
\begin{aligned}
\mathbf{E}\left|X^{t, x; u}_s\right|^k & \leqslant C\left(1+|x|^k\right), \quad s \in[t, T], \\
\mathbf{E}\left|X^{t, x; u}_s-x\right|^k & \leqslant C\left(1+|x|^k\right)(T-t)^{\frac{k}{2}}, \quad s \in[t, T], \\
\mathbf{E}\left[\sup _{t \leqslant s \leqslant T}\left|X^{t, x; u}_s-x\right|\right]^k & \leqslant C\left(1+|x|^k\right)(T-t)^{\frac{k}{2}}, \\
\mathbf{E}\left|X^{t, x; u}_s-X^{t, \hat{x}; u}_s\right|^k & \leqslant C|x-\hat{x}|^k, \quad s \in[t, T], \\
\mathbf{E}\left|X^{t, x; u}_s-X^{\hat{t}, x; u}_s\right|^k & \leqslant C(1+|x|)|t-\hat{t}|^{\frac{k}{2}}, \quad s \in[t \vee \hat{t}, T].
\end{aligned}
$$
\end{mylem}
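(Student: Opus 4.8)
The plan is to prove the five estimates in the order listed, since the later ones build on the earlier ones, and to rely throughout on the same three tools: the Burkholder--Davis--Gundy (BDG) inequality, applied to both the Brownian stochastic integral and the compensated Poisson integral in (\ref{SDEP equation}); H\"older's inequality for the drift term; and Gr\"onwall's inequality. These are combined with the linear-growth and Lipschitz bounds of (H1) and the admissibility bound $\mathbf{E}[\sup_{t\le s\le T}|u_s|^k]<\infty$.

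First I would establish the second and third estimates together. Writing (\ref{SDEP equation}) in integral form and using $|a+b+c+d|^k\le C(|a|^k+|b|^k+|c|^k+|d|^k)$, I would bound the drift term by H\"older's inequality, the diffusion and jump terms by BDG, and invoke the growth bounds of (H1) to reach an inequality of the form
$$
\mathbf{E}\Big[\sup_{t\le r\le s}|X_r^{t,x;u}-x|\Big]^k \le C(1+|x|^k)(s-t)^{\frac{k}{2}} + C\int_t^s \mathbf{E}\Big[\sup_{t\le \tau\le r}|X_\tau^{t,x;u}-x|\Big]^k\,dr ,
$$
the small-time rate $(s-t)^{k/2}$ coming from the two martingale parts. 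Gr\"onwall's inequality then yields the third estimate, from which the second is immediate, and the first follows from the triangle inequality $|X_s^{t,x;u}|\le |x|+|X_s^{t,x;u}-x|$.

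Next, for the fourth estimate I would set $\Delta_s:=X_s^{t,x;u}-X_s^{t,\hat x;u}$, note that $\Delta_\cdot$ solves the SDEP driven by the differences of the coefficients with $\Delta_t=x-\hat x$, and observe that because the control $u_\cdot$ is common to both processes the $|u-\hat u|$ terms in the Lipschitz bounds of (H1) drop out, leaving all coefficient differences dominated by $C|\Delta_s|$; the same BDG--Gr\"onwall scheme then gives $\mathbf{E}\sup_{t\le r\le s}|\Delta_r|^k\le C|x-\hat x|^k$. For the fifth estimate I would assume without loss of generality $t\le \hat t$ and exploit the flow property of (\ref{SDEP equation}): by uniqueness of solutions, $X_s^{t,x;u}=X_s^{\hat t,\,X_{\hat t}^{t,x;u};\,u}$ for $s\in[\hat t,T]$. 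Applying the fourth estimate (in its random-initial-data form) and then the second estimate gives
$$
\mathbf{E}\big|X_s^{t,x;u}-X_s^{\hat t,x;u}\big|^k \le C\,\mathbf{E}\big|X_{\hat t}^{t,x;u}-x\big|^k \le C(1+|x|^k)|t-\hat t|^{\frac{k}{2}} .
$$

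I expect the main obstacle to be the correct handling of the jump term when $k=4$. Whereas the $k=2$ case of the compensated Poisson integral is settled by the It\^{o} isometry, the fourth-moment bound requires the BDG (Kunita-type) inequality for jump martingales, which produces both a term of order $\big(\int_t^s\|\bar f\|_{\mathcal{L}^2}^2\,dr\big)^{2}$ and a genuinely quartic term $\mathbf{E}\int_t^s\int_{\mathcal{E}}|f|^4\nu(de)\,dr$; controlling the latter through the structure of (H1), or alternatively by applying It\^{o}'s formula to $|X_s^{t,x;u}|^4$ and treating the jump-compensator remainder directly, is the delicate point. A secondary technical point is that the flow-property argument for the fifth estimate needs the fourth estimate for an $\mathcal{F}_{\hat t}$-measurable (random) initial condition; since determinism of the initial state is never used in its proof, the estimate extends verbatim, so this poses no real difficulty.
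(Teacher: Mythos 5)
Your proposal is correct and matches the paper's approach: the paper itself quotes Lemma \ref{Lemma 4.1} from \cite{ShiWu2011} without reproducing a proof, and the analogous argument it does write out---the proof of Corollary \ref{Corollary 4.1}---uses precisely your scheme of the B-D-G inequality, H\"older's inequality, Fubini's theorem and Gronwall's inequality, including the iterated application of B-D-G to the compensated Poisson integral that converts the higher-moment jump term into successively higher-power integrands until it can be absorbed. The delicate point you flag for $k=4$ (that (H1) controls only $\|f\|_{\mathcal{L}^2}$, so the genuinely quartic term $\int_{\mathcal{E}}|f|^4\,\nu(de)$ must be treated by this iteration rather than a direct bound) is exactly the step the paper handles that way in Corollary \ref{Corollary 4.1}, and your flow-property reduction of the fifth estimate to the fourth estimate with an $\mathcal{F}_{\hat t}$-measurable initial condition, combined with the second estimate, is the standard and sound route.
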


\begin{mylem}\label{Lemma 4.2}
Let (H1) hold. For all $k=2,4$, there exists $C>0$ such that $0 \leq t \leq T, x, \hat{x}\in \mathbf{R}^n, u_\cdot \in \mathcal{U}^w[s, T]$,
\begin{equation}
\mathbf{E}\left[\sup _{t \leqslant s \leqslant T}\left|X^{t, x; u}_s-X^{t, \hat{x}; u}_s\right|^k\right] \leqslant C|x-\hat{x}|^k, \quad s \in[t, T].
 \label{estimate of sup X-X}
\end{equation}
\end{mylem}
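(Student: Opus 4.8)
The plan is to derive an a priori estimate for the difference process and close it with Gronwall's inequality; the only point beyond Lemma \ref{Lemma 4.1} is that the supremum now sits inside the expectation, which forces maximal (Doob/Burkholder--Davis--Gundy/Kunita) inequalities on the Brownian and Poisson martingale parts. Put $\Xi_s := X^{t,x;u}_s - X^{t,\hat{x};u}_s$. Subtracting the integral forms of (\ref{SDEP equation}) for the two initial data gives, for $s\in[t,T]$,
\[
\Xi_s = (x-\hat{x}) + \int_t^s \Delta b(r)\,dr + \int_t^s \Delta\sigma(r)\,dW_r + \int_t^s\int_{\mathcal{E}} \Delta f(r,e)\,\tilde{N}(de,dr),
\]
where $\Delta b(r):=b(r,X^{t,x;u}_r,u_r)-b(r,X^{t,\hat{x};u}_r,u_r)$ and $\Delta\sigma,\Delta f$ are defined analogously; the common control $u_\cdot$ makes the $|u-\hat{u}|$ terms in (H1) drop out. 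Write $\Phi(\tau):=\mathbf{E}\big[\sup_{t\leqslant s\leqslant\tau}|\Xi_s|^k\big]$. Since $|\Xi_s|\leqslant |X^{t,x;u}_s-x|+|x-\hat{x}|+|X^{t,\hat{x};u}_s-\hat{x}|$, the sup-moment bounds of Lemma \ref{Lemma 4.1} already give $\Phi(T)<\infty$, so Gronwall may be applied directly (if one prefers, a localizing stopping time $\tau_n:=\inf\{s\geqslant t:|\Xi_s|>n\}\wedge T$ together with Fatou's lemma removes any remaining concern).

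Raising $|\Xi_s|$ to the power $k$, taking the supremum over $s\in[t,\tau]$ and then the expectation, I split the right-hand side into four contributions. The initial datum contributes $C|x-\hat{x}|^k$. For the drift, H\"{o}lder's inequality in time with the Lipschitz bound $|\Delta b(r)|\leqslant C|\Xi_r|$ from (H1) gives
\[
\mathbf{E}\Big[\sup_{t\leqslant s\leqslant\tau}\Big|\int_t^s \Delta b(r)\,dr\Big|^k\Big]\leqslant C(T-t)^{k-1}\,\mathbf{E}\int_t^\tau |\Xi_r|^k\,dr.
\]
For the diffusion term, the Burkholder--Davis--Gundy inequality followed by H\"{o}lder and $|\Delta\sigma(r)|\leqslant C|\Xi_r|$ yields a bound of the same type, $C\,\mathbf{E}\int_t^\tau|\Xi_r|^k\,dr$. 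For the Poisson term I invoke Kunita's (BDG-type) inequality for compensated jump integrals, using that the $\mathcal{L}^2$-Lipschitz hypothesis in (H1) furnishes exactly $\int_{\mathcal{E}}|\Delta f(r,e)|^2\nu(de)\leqslant C|\Xi_{r-}|^2$.

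The main obstacle is this jump term when $k=4$: Kunita's inequality produces, besides the quadratic-variation piece $C\,\mathbf{E}\big(\int_t^\tau\int_{\mathcal{E}}|\Delta f|^2\nu(de)\,dr\big)^{2}$ (bounded via H\"{o}lder and the $\mathcal{L}^2$-Lipschitz estimate at the cost of a factor $T-t$), a genuinely higher-order piece $C\,\mathbf{E}\int_t^\tau\int_{\mathcal{E}}|\Delta f(r,e)|^4\nu(de)\,dr$. Dominating the latter by $C\,\mathbf{E}\int_t^\tau|\Xi_r|^4\,dr$ is the delicate step; it is carried out exactly as in the fourth-moment part of Lemma \ref{Lemma 4.1} (\cite{ShiWu2011}), relying on the structure of $\nu$ and the Lipschitz continuity of $f$. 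Collecting the four contributions and using $\mathbf{E}|\Xi_r|^k\leqslant\Phi(r)$, I obtain
\[
\Phi(\tau)\leqslant C|x-\hat{x}|^k + C\int_t^\tau \Phi(r)\,dr,\qquad \tau\in[t,T],
\]
whence Gronwall's inequality gives $\Phi(T)\leqslant C|x-\hat{x}|^k e^{C(T-t)}\leqslant C|x-\hat{x}|^k$, which is (\ref{estimate of sup X-X}) for $k=2,4$.
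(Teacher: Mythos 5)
Your proof is correct and is essentially the paper's own argument (carried out there in the proof of Corollary \ref{Corollary 4.1}, which covers $k=2,4$ and beyond): the same difference-process decomposition, H\"{o}lder on the drift, B-D-G on the Brownian integral, a BDG/Kunita-type bound on the compensated jump integral, and Gronwall to close, with finiteness of the sup-moment from Lemma \ref{Lemma 4.1} justifying the Gronwall step. The only cosmetic difference is that you invoke Kunita's two-term inequality once for $k=4$, whereas the paper reaches the same two pieces by writing $N=\tilde{N}(de,d\alpha)+\nu(de)\,d\alpha$ and iterating B-D-G to halve the exponent; the quartic jump moment you flag as the delicate step is exactly the term the paper absorbs in its iterated chain of estimates, under the same reading of (H1).
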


\begin{mycor}\label{Corollary 4.1}
Let (H1) hold. For all $k=2^i, i=1,2,\cdots$, Lemma \ref{Lemma 4.1} and Lemma \ref{Lemma 4.2} also hold.
\end{mycor}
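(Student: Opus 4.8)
The plan is to observe that the proofs of Lemma \ref{Lemma 4.1} and Lemma \ref{Lemma 4.2} for $k=2,4$ use only It\^{o}'s formula for jump diffusions, the Doob and Burkholder--Davis--Gundy (BDG) inequalities, the growth and Lipschitz bounds in (H1), and Gronwall's lemma, none of which is special to the exponents $2$ and $4$. Accordingly, I would prove all six estimates simultaneously for each $k=2^{i}$ by induction on $i$, taking $i=1,2$ (that is, $k=2,4$) as the base cases furnished by Lemmas \ref{Lemma 4.1}--\ref{Lemma 4.2}, and carrying out the inductive step $i\to i+1$ with $k=2^{i+1}$.

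For the inductive step I would treat the six estimates one at a time, in each case applying It\^{o}'s formula to the $k$-th power of the relevant process: $|X^{t,x;u}_{s}|^{k}$ for the first estimate, $|X^{t,x;u}_{s}-x|^{k}$ for the second and third, $|X^{t,x;u}_{s}-X^{t,\hat x;u}_{s}|^{k}$ for the fourth and for Lemma \ref{Lemma 4.2}, and $|X^{t,x;u}_{s}-X^{\hat t,x;u}_{s}|^{k}$ for the last. In the resulting It\^{o} expansion the drift and diffusion contributions are controlled by the linear growth (respectively Lipschitz) bounds of (H1) together with Young's inequality, which absorbs all lower powers into $C\bigl(1+|X^{t,x;u}_{r}|^{k}+|u_{r}|^{k}\bigr)$; the admissibility requirement $\mathbf{E}\bigl[\sup_{r}|u_{r}|^{k}\bigr]<\infty$ guarantees that this control term is integrable. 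The jump compensator is handled by the elementary bound
\begin{equation*}
\big||y+w|^{k}-|y|^{k}-k|y|^{k-2}\langle y,w\rangle\big|\le C_{k}\big(|y|^{k-2}|w|^{2}+|w|^{k}\big),
\end{equation*}
after which the $\nu$-integrals are dominated using the growth bound on $f$ in (H1) and the finiteness of $\nu$. Taking expectations eliminates the two martingale parts and leaves, for the first and fourth estimates, a Gronwall inequality $\varphi(s)\le C+C\int_{t}^{s}\varphi(r)\,dr$; for the second, third and fifth estimates the same inequality carries an extra forcing term (of order $(T-t)^{k/2}$, respectively $|t-\hat t|^{k/2}$), which reproduces the stated short-time decay, the fifth estimate being obtained by combining such a short-time increment bound with the Lipschitz-in-$x$ estimate (the fourth).

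The two supremum estimates (the third of Lemma \ref{Lemma 4.1} and that of Lemma \ref{Lemma 4.2}) require one additional ingredient: before taking expectations I would keep the supremum inside and apply Doob's inequality to the Brownian martingale and the BDG inequality to both the continuous and the purely discontinuous martingale parts. This converts the supremum into predictable quadratic-variation integrals, which are then estimated exactly as the drift terms above, the induction hypothesis at level $2^{i}$ supplying the lower moments of $X$ that appear after one BDG step.

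I expect the genuine obstacle to be the jump contribution at the higher power, namely controlling the term produced by $\int_{\mathcal{E}}|f(r,\cdot,\cdot,e)|^{k}\,\nu(de)$ (equivalently the $|w|^{k}$ piece of the bound above, and the $|f|^{k}$ term in Kunita's form of the BDG inequality). This is the one point at which passing from $k=2$ to larger $k$ is not purely formal, since it is not dominated by the $\mathcal{L}^{2}$ bound alone and must be absorbed through the growth condition in (H1) and the finiteness of $\nu$; it is also precisely why the corollary is phrased for the dyadic exponents $k=2^{i}$, for which the quadratic-variation terms generated by successive BDG applications match the moment bounds delivered by the induction hypothesis. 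Everything else is a routine repetition of the computations already performed for $k=2,4$.
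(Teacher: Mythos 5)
Your overall toolbox (BDG, H\"older, Gronwall, dyadic exponents) overlaps the paper's, but your resolution of what you yourself correctly identify as the crux fails. The term $\int_{\mathcal{E}}|w|^k\nu(de)$ produced by your binomial expansion (equivalently $\int_{\mathcal{E}}|f|^k\nu(de)$, or the $|f|^k$ term in Kunita's inequality) cannot be "absorbed through the growth condition in (H1) and the finiteness of $\nu$": (H1) controls only the $\mathcal{L}^2(\nu)$-norms of $f$ and of its increments, and on a finite measure space H\"older's inequality runs the wrong way for you — it gives $\|g\|_{\mathcal{L}^2}\leqslant \nu(\mathcal{E})^{\frac12-\frac1k}\|g\|_{\mathcal{L}^k}$ for $k\geqslant 2$, i.e., the $\mathcal{L}^2$ bound is dominated by the $\mathcal{L}^k$ bound, never the reverse. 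So your It\^{o}-expansion-plus-induction route stalls exactly at the point you flagged, and the tools you invoke do not close it; the induction on $i$ does not help either, since the offending term involves higher moments of $f$ in $e$, not lower moments of $X$.

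The paper's proof is structured precisely to avoid that pointwise expansion. It proves only the estimate of Lemma \ref{Lemma 4.2} (declaring the rest analogous), works directly from the integral equation for $X^{t,x;u}_\cdot-X^{t,\hat{x};u}_\cdot$ with no It\^{o} formula applied to $k$-th powers and no induction, and treats the jump martingale by an \emph{iterated} BDG/compensator splitting: one BDG application bounds $\mathbf{E}\sup\big|\int\int\Delta f\,\tilde{N}(de,d\alpha)\big|^k$ by $\mathbf{E}\big|\int\int|\Delta f|^2 N(de,d\alpha)\big|^{k/2}$; writing $N=\tilde{N}+\nu(de)d\alpha$, the compensator piece involves $\|\Delta f(\alpha,\cdot)\|_{\mathcal{L}^2}^2$ and is handled by the Lipschitz bound of (H1) plus H\"older, while the residual martingale piece is hit with BDG again, halving the outer exponent and doubling the inner power of $\Delta f$. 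Because $k=2^i$ is dyadic, finitely many iterations exhaust the exponent and Gronwall finishes — this iteration, not matching of induction hypotheses, is the real reason for the dyadic restriction. (Even the paper is slightly loose here: at the second iterate the compensator is $\int_{\mathcal{E}}|\Delta f|^4\nu(de)$, which it writes as $\|\Delta f\|_{\mathcal{L}^2}^4$, tacitly using higher-$\mathcal{L}^p$ control of $f$; but at least that scheme localizes the difficulty to compensator terms instead of meeting the raw $\mathcal{L}^k$-norm head-on, as your expansion does.) To repair your proposal, replace the binomial-expansion treatment of the jump part by this iterated BDG splitting.
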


\begin{proof}
We only prove (\ref{estimate of sup X-X}) also holds. In fact, since for $x, \hat{x} \in \mathbf{R}^n$,
$$
\begin{aligned}
X^{t, x; u}_s-X^{t, \hat{x}; u}_s &= x-\hat{x} + \int_t^s\left(b(\alpha,X_\alpha^{t,x;u},u_\alpha)-b(\alpha,X_\alpha^{t,\hat{x};u},u_\alpha )\right)d\alpha \\
&\quad +\int_t^s\left(\sigma(\alpha,X_\alpha^{t,x;u},u_\alpha)-\sigma(\alpha,X_\alpha^{t,\hat{x};u},u_\alpha )\right)dW_\alpha \\
&\quad +\int_t^s \int_\mathcal{E}\left(f(\alpha,X_{\alpha-}^{t,x;u},u_\alpha,e)-f(\alpha,X_{\alpha-}^{t,\hat{x};u},u_\alpha,e)\right)\tilde{N}(d e,d\alpha),
\end{aligned}
$$
we have
$$
\begin{aligned}
\sup _{t \leqslant s \leqslant T}\left|X^{t, x; u}_s-X^{t, \hat{x}; u}_s\right|^k
&\leqslant C\bigg[|x-\hat{x}|^k + \sup _{t \leqslant s \leqslant T}\left|\int_t^s\left|b(\alpha,X_\alpha^{t,x;u},u_\alpha)-b(\alpha,X_\alpha^{t,\hat{x};u},u_\alpha )\right|d\alpha\right|^k \\
&\quad +\sup _{t \leqslant s \leqslant T}\left|\int_t^s\left|\sigma(\alpha,X_\alpha^{t,x;u},u_\alpha)-\sigma(\alpha,X_\alpha^{t,\hat{x};u},u_\alpha )\right|dW_\alpha\right|^k \\
&\quad +\sup _{t \leqslant s \leqslant T}\left|\int_t^s \int_\mathcal{E}\left|f(\alpha,X_{\alpha-}^{t,x;u},u_\alpha,e)-f(\alpha,X_{\alpha-}^{t,\hat{x};u},u_\alpha,e)\right|\tilde{N}(d e,d\alpha)\right|^k\bigg].
\end{aligned}
$$
We denote
$$
\begin{aligned}
& \Delta b(\alpha):=b(\alpha,X_\alpha^{t,x;u},u_\alpha)-b(\alpha,X_\alpha^{t,\hat{x};u},u_\alpha ), \\
& \Delta \sigma(\alpha):=\sigma(\alpha,X_\alpha^{t,x;u},u_\alpha)-\sigma(\alpha,X_\alpha^{t,\hat{x};u},u_\alpha ), \\
& \Delta f(\alpha,e):=f(\alpha,X_\alpha^{t,x;u},u_\alpha,e)-f(\alpha,X_\alpha^{t,\hat{x};u},u_\alpha,e ).
\end{aligned}
$$
By B-D-G inequality, H\"{o}lder's inequality, Fubini's theorem and (H1), for all $k=2^i, i=1,2,\cdots$, we have
$$
\begin{aligned}
&\quad \mathbf{E}\sup _{t \leqslant s \leqslant T}\left|X^{t, x; u}_s-X^{t, \hat{x}; u}_s\right|^k \\
&\leqslant C\bigg[|x-\hat{x}|^k + \mathbf{E}\sup _{t \leq s \leq T}\left|\int_t^s\left| \Delta b(\alpha)\right|d\alpha\right|^k
 + \mathbf{E}\sup _{t \leq s \leq T}\left|\int_t^s\left| \Delta \sigma(\alpha)\right|dW_\alpha\right|^k \\
&\quad + \mathbf{E}\sup _{t \leq s \leq T}\left|\int_t^s \int_\mathcal{E}\left| \Delta f(\alpha,e)\right|\tilde{N}(d e,d\alpha)\right|^k\bigg] \\
&\leqslant C\bigg[|x-\hat{x}|^k + \mathbf{E}\sup _{t \leq s \leq T}\left|\int_t^s\left| \Delta b(\alpha)\right|d\alpha\right|^k + \mathbf{E}\left|\int_t^T\left| \Delta \sigma(\alpha)\right|^2d\alpha\right|^{\frac{k}{2}} \\
&\quad + \mathbf{E}\left|\int_t^T \int_\mathcal{E}\left| \Delta f(\alpha,e)\right|^2{N}(d e,d\alpha)\right|^{\frac{k}{2}}\bigg] \\
&\leqslant C\bigg[|x-\hat{x}|^k + \mathbf{E}\sup _{t \leq s \leq T}\left|\int_t^s\left| \Delta b(\alpha)\right|d\alpha\right|^k + \mathbf{E}\left|\int_t^T\left| \Delta \sigma(\alpha)\right|^2d\alpha\right|^{\frac{k}{2}} \\
&\quad + \mathbf{E}\left|\int_t^T \left\| \Delta f(\alpha,e)\right\|_{\mathcal{L}^2}^2d\alpha\right|^{\frac{k}{2}}\bigg]
 + \mathbf{E}\left|\int_t^T \int_\mathcal{E}\left| \Delta f(\alpha,e)\right|^2\tilde{N}(d e,d\alpha)\right|^{\frac{k}{2}}\bigg] \\
&\leqslant C\bigg[|x-\hat{x}|^k + \mathbf{E}\sup _{t \leq s \leq T}\left|\int_t^s\left|X^{t, x; u}_\alpha-X^{t, \hat{x}; u}_\alpha\right|d\alpha\right|^k \\
&\quad +\mathbf{E}\left|\int_t^T\left|X^{t, x; u}_\alpha-X^{t, \hat{x}; u}_\alpha\right|^2d\alpha\right|^{\frac{k}{2}}
 + \mathbf{E}\sup _{t \leq s \leq T}\left|\int_t^s \int_\mathcal{E}\left| \Delta f(\alpha,e)\right|^2\tilde{N}(d e,d\alpha)\right|^{\frac{k}{2}}\bigg] \\
&\leqslant C\bigg[|x-\hat{x}|^k + \mathbf{E}\sup _{t \leq s \leq T}\int_t^s\left|X^{t, x; u}_\alpha-X^{t, \hat{x}; u}_\alpha\right|^kd\alpha+\mathbf{E}\int_t^T\left|X^{t, x; u}_\alpha-X^{t, \hat{x}; u}_\alpha\right|^kd\alpha \\
&\quad + \mathbf{E}\left|\int_t^T \left\| \Delta f(\alpha,e)\right\|_{\mathcal{L}^2}^4d\alpha\right|^{\frac{k}{4}}
 + \mathbf{E}\left|\int_t^T \int_\mathcal{E}\left| \Delta f(\alpha,e)\right|^4\tilde{N}(d e,d\alpha)\right|^{\frac{k}{4}}\bigg] \\
&\leqslant \cdots \quad \quad \quad \quad  \\
&\leqslant C\bigg[|x-\hat{x}|^k + \mathbf{E}\int_t^T\left|X^{t, x; u}_\alpha-X^{t, \hat{x}; u}_\alpha\right|^kd\alpha\bigg] \\
&\leqslant C\bigg[|x-\hat{x}|^k + \mathbf{E}\int_t^T \sup _{t \leq r \leq \alpha}\left|X^{t, x; u}_r-X^{t, \hat{x}; u}_r \right|^kd\alpha\bigg] \\
&= C\bigg[|x-\hat{x}|^k + \int_t^T \mathbf{E} \sup _{t \leq r \leq \alpha}\left|X^{t, x; u}_r-X^{t, \hat{x}; u}_r\right|^kd\alpha\bigg].
\end{aligned}
$$
By Gronwall's inequality, (\ref{estimate of sup X-X}) holds. The proof is complete.
\end{proof}

Using the notation introduced in (\ref{BSDEP equation}), we now suppose that, for $i=1,2$ and any $s\in[t,T]$,
$$
g_i(s,y_s^i,z_s^i,\tilde{z}_s^i):=g(s,y_s^i,z_s^i,\tilde{z}_s^i)+\varphi^i(s),
$$
where $\varphi^i(\cdot)\in L_{\mathcal{F}}^2([t,T];\mathbf{R})$ and $g$ is defined in (\ref{BSDEP equation}). Then we have the following result, which belongs to \cite{BBP1997} and \cite{LiPeng2009}.

\begin{mylem}\label{Lemma 4.3}
Let (H1)-(H2) hold. For given $(t,x)\in [0,T)\times \mathbf{R}^n, u_\cdot\in \mathcal{U}^w[t,T]$, the difference of two solutions $(Y_\cdot^{t, x ; u ; 1},Z_\cdot^{t, x ; u ; 1},\tilde{Z}_{(\cdot,\cdot)}^{t, x ; u ; 1})\in L_{\mathcal{F}}^2([t,T];\mathbf{R})\times L_{\mathcal{F},p}^2([t,T];\mathbf{R})\times F_p^2([t,T]\times\mathcal{E};\mathbf{R})$ and $(Y_\cdot^{t, x ; u ; 2},Z_\cdot^{t, x ; u ; 2},\tilde{Z}_{(\cdot,\cdot)}^{t, x ; u ; 2})\in L_{\mathcal{F}}^2([t,T];\mathbf{R})\times L_{\mathcal{F},p}^2([t,T];\mathbf{R})\times F_p^2([t,T]\times\mathcal{E};\mathbf{R})$ of BSDEP (\ref{BSDEP equation}) with the data $(\phi^1,g^1)$ and $(\phi^2,g^2)$, respectively, satisfies the following estimate:
$$
\begin{aligned}
&\quad \left|Y_s^{t, x ; u ; 1}-Y_s^{t, x ; u ; 2}\right|^2+\frac{1}{2} \mathbf{E}\left[\int_s^T e^{\beta(r-s)}\left(\left|Y_r^{t, x ; u ; 1}-Y_r^{t, x ; u ; 2}\right|^2
 +\left|Z_r^{t, x ; u ; 1}-Z_r^{t, x ; u ; 2}\right|^2\right) d r \Big| \mathcal{F}_s^t\right] \\
&\quad +\frac{1}{2} \mathbf{E}\left[\int_s^T \int_\mathcal{E} e^{\beta(r-s)}\left|\tilde{Z}_{(r,e)}^{t, x ; u ; 1}-\tilde{Z}_{(r,e)}^{t, x ; u ; 2}\right|^2 \nu(de) dr \Big| \mathcal{F}_s^t\right] \\
& \leqslant \mathbf{E}\left[e^{\beta(T-s)}\left|\phi^1-\phi^2\right|^2 \Big| \mathcal{F}_s^t\right]+\mathbf{E}\left[\int_s^T e^{\beta(r-s)}\left|\varphi^1(r)-\varphi^2(r)\right|^2 d r \Big| \mathcal{F}_s^t\right],\quad \mathbf{P}\text{-a.s.},
\end{aligned}
$$
for all $t \leqslant s \leqslant T$, where $\beta \geqslant 2+2 C+4 C^2$. Moreover, the following estimate holds:
$$
\begin{aligned}
\mathbf{E}  {\left[\sup _{t \leqslant s \leqslant T}\left|Y^{t, x ; u}_s\right|^2 \Big| \mathcal{F}_s^t\right] }
 \leqslant C \mathbf{E}\left[\left|\phi\left(X^{t, x ; u}_T\right)\right|^2+\int_s^T\left|g\left(r, X^{t, x ; u}_r, 0,0,0, u_r\right)\right|^2 d r \Big| \mathcal{F}_s^t\right] .
\end{aligned}
$$
\end{mylem}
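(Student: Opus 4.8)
The plan is to prove both bounds by the classical weighted-It\^o (energy) method for BSDEPs, handling the supremum in the second bound with an additional Burkholder--Davis--Gundy (BDG) step. Write the differences of the two solutions as $\hat Y_r:=Y^{t,x;u;1}_r-Y^{t,x;u;2}_r$, $\hat Z_r:=Z^{t,x;u;1}_r-Z^{t,x;u;2}_r$ and $\hat{\tilde Z}_{(r,e)}:=\tilde Z^{t,x;u;1}_{(r,e)}-\tilde Z^{t,x;u;2}_{(r,e)}$. Subtracting the two copies of (\ref{BSDEP equation}), the triple $(\hat Y,\hat Z,\hat{\tilde Z})$ solves a BSDEP with terminal value $\phi^1-\phi^2$ and generator $\Delta g(r):=g(r,Y^1_r,Z^1_r,\tilde Z^1_{(r,\cdot)})-g(r,Y^2_r,Z^2_r,\tilde Z^2_{(r,\cdot)})+\varphi^1(r)-\varphi^2(r)$, whose first difference is controlled by (H2).

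For the first estimate I would apply It\^o's formula for jump diffusions to $e^{\beta(r-s)}|\hat Y_r|^2$ on $[s,T]$ and take $\mathbf{E}[\,\cdot\mid\mathcal{F}_s^t]$. The $dW$-integral and the compensated $\tilde N$-integral are martingales and drop out, while the Brownian and jump quadratic-variation terms contribute $|\hat Z_r|^2$ and, after re-expressing the purely-jump second-order term through its compensator $\nu(de)\,dr$, the quantity $\int_{\mathcal{E}}|\hat{\tilde Z}_{(r,e)}|^2\nu(de)$. This gives
\begin{equation*}
\begin{aligned}
&|\hat Y_s|^2+\mathbf{E}\bigg[\int_s^T e^{\beta(r-s)}\Big(\beta|\hat Y_r|^2+|\hat Z_r|^2+\int_{\mathcal{E}}|\hat{\tilde Z}_{(r,e)}|^2\nu(de)\Big)dr\,\Big|\,\mathcal{F}_s^t\bigg]\\
&\qquad=\mathbf{E}\big[e^{\beta(T-s)}|\phi^1-\phi^2|^2\,\big|\,\mathcal{F}_s^t\big]+\mathbf{E}\bigg[\int_s^T e^{\beta(r-s)}\,2\hat Y_r\,\Delta g(r)\,dr\,\Big|\,\mathcal{F}_s^t\bigg].
\end{aligned}
\end{equation*}
It then remains to bound the cross term: by the Lipschitz estimate in (H2) one has $|g(r,Y^1_r,\dots)-g(r,Y^2_r,\dots)|\le C(|\hat Y_r|+|\hat Z_r|+\|\hat{\tilde Z}_{(r,\cdot)}\|_{\mathcal{L}^2})$, and Young's inequality $2ab\le\frac12 b^2+2a^2$ yields $2\hat Y_r\Delta g(r)\le(1+2C+4C^2)|\hat Y_r|^2+\frac12|\hat Z_r|^2+\frac12\|\hat{\tilde Z}_{(r,\cdot)}\|_{\mathcal{L}^2}^2+|\varphi^1(r)-\varphi^2(r)|^2$. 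Substituting this and choosing $\beta\ge 2+2C+4C^2$ makes the net coefficient of $|\hat Y_r|^2$ on the left at least $\frac12$ and those of $|\hat Z_r|^2$ and $\|\hat{\tilde Z}_{(r,\cdot)}\|_{\mathcal{L}^2}^2$ equal to $\frac12$, which is exactly the claimed inequality.

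For the second estimate I would run the same It\^o computation on $|Y^{t,x;u}_r|^2$ (now unweighted), using (H2) in the form $|g(r,X_r,y,z,\tilde z,u_r)|\le|g(r,X_r,0,0,0,u_r)|+C(|y|+|z|+\|\tilde z\|_{\mathcal{L}^2})$ to first obtain $L^2$-bounds on $\mathbf{E}\int_s^T|Z_r|^2dr$ and $\mathbf{E}\int_s^T\|\tilde Z_{(r,\cdot)}\|_{\mathcal{L}^2}^2dr$. I would then take the supremum over $s$ before taking expectations and apply BDG to $\sup_s\big|\int 2Y_rZ_r\,dW_r\big|$ and to the compensated-Poisson integral, dominating each by $\frac12\mathbf{E}\sup_s|Y_s|^2$ plus the already-controlled energy terms; absorbing the $\frac12\mathbf{E}\sup_s|Y_s|^2$ into the left side and closing with Gronwall's inequality yields the stated bound.

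The step I expect to be the main obstacle is the correct bookkeeping of the jump terms in It\^o's formula --- in particular verifying that the second-order jump contribution, after subtracting its compensator, enters the drift as $+\int_{\mathcal{E}}|\hat{\tilde Z}_{(r,e)}|^2\nu(de)$ with the right sign, and that the cross term $\hat Y_{r-}\hat{\tilde Z}_{(r,e)}$ integrated against $\tilde N$ is a genuine martingale that vanishes under $\mathbf{E}[\,\cdot\mid\mathcal{F}_s^t]$. The analogous delicate point in the sup-norm estimate is the BDG bound for the compensated Poisson integral, where one must keep the absorbing constant strictly below one so that the supremum term can be moved to the left-hand side.
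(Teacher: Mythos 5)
Your proposal is correct and is exactly the canonical argument: the weighted It\^o computation applied to $e^{\beta(r-s)}|\hat Y_r|^2$ (with the jump quadratic variation re-expressed through its compensator), the Young-inequality bookkeeping giving the coefficient $1+2C+4C^2$ absorbed by $\beta\geqslant 2+2C+4C^2$, and the BDG-plus-Gronwall treatment of the supremum estimate are all as they should be. Note that the paper itself offers no proof of Lemma \ref{Lemma 4.3} but attributes it to \cite{BBP1997} and \cite{LiPeng2009}, and your argument faithfully reproduces the standard proof found in those references, including the two genuinely delicate points you flag (the sign of the compensated second-order jump term and the strict-less-than-one absorbing constant in the BDG step).
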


Let us recall the definitions of viscosity solution to (\ref{HJB equation}), and of semi-jets of a continuous function on $[0, T] \times \mathbf{R}^n$ (see Yong and Zhou \cite{YongZhou1999} or Barles and Imbert \cite{BarlesImbert2008}).

\begin{mydef}
(i) A function $v \in C\left([0, T] \times \mathbf{R}^n\right)$ is called a viscosity subsolution of (\ref{HJB equation}) if $v(T, x) \leq -\phi(x), \forall x \in \mathbf{R}^n$, and for any test function $\psi \in C^{1,2}\left([0, T] \times \mathbf{R}^n\right)$, whenever $v-\psi$ attains a global maximum at $(t, x) \in[0, T) \times \mathbf{R}^n$, then
$$
-\psi_t(t, x)+\sup _{u \in \mathbf{U}} G\left(t, x,-\psi(t, x),-\psi_x(t, x),-\psi_{x x}(t, x), u\right) \leqslant 0.
$$
(ii) A function $v \in C\left([0, T] \times \mathbf{R}^n\right)$ is called a viscosity supersolution of (\ref{HJB equation}) if $v(T, x) \geq -\phi(x), \forall x \in \mathbf{R}^n$, and for any test function $\psi \in C^{1,2}\left([0, T] \times \mathbf{R}^n\right)$, whenever  $v-\psi$ attains a global minimum at $(t, x) \in[0, T) \times \mathbf{R}^n$, then
$$
-\psi_t(t, x)+\sup _{u \in \mathbf{U}} G\left(t, x,-\psi(t, x),-\psi_x(t, x),-\psi_{x x}(t, x), u\right) \geqslant 0.
$$
(iii) If $v \in C\left([0, T] \times \mathbf{R}^n\right)$ is both a viscosity subsolution and viscosity supersolution of (\ref{HJB equation}), then it is called a viscosity solution of (\ref{HJB equation}).
\end{mydef}

For $v \in C\left([0, T] \times \mathbf{R}^n\right)$ and $(\hat{t}, \hat{x}) \in[0, T) \times \mathbf{R}^n$, the right parabolic superjet of $v$ at $(\hat{t}, \hat{x})$ is the set triple
$$
\begin{aligned}
\mathcal{D}_{t+, x}^{1,2,+} v(\hat{t}, \hat{x}) &:=\bigg\{(q, p, P) \in \mathbf{R} \times \mathbf{R}^n \times \mathcal{S}^n \Big| v(t, x) \leq v(\hat{t}, \hat{x})+q(t-\hat{t})+\langle p, x-\hat{x}\rangle \\
&\qquad +\frac{1}{2}(x-\hat{x})^\top P(x-\hat{x})+o\left(|t-\hat{t}|+|x-\hat{x}|^2\right), \text { as } t \downarrow \hat{t}, x \rightarrow \hat{x}\bigg\},
\end{aligned}
$$
and the right parabolic subjet of $v$ at $(\hat{t}, \hat{x})$ is the set triple
$$
\begin{aligned}
\mathcal{D}_{t+, x}^{1,2,-} v(\hat{t}, \hat{x}) &:=\bigg\{(q, p, P) \in \mathbf{R} \times \mathbf{R}^n \times \mathcal{S}^n \Big| v(t, x) \geq v(\hat{t}, \hat{x})+q(t-\hat{t})+\langle p, x-\hat{x}\rangle \\
&\qquad +\frac{1}{2}(x-\hat{x})^\top P(x-\hat{x})+o\left(|t-\hat{t}|+|x-\hat{x}|^2\right), \text { as } t \downarrow \hat{t}, x \rightarrow \hat{x}\bigg\} .
\end{aligned}
$$
From the above definitions, we see immediately that
$$
\left\{\begin{array}{l}
\mathcal{D}_{t+, x}^{1,2,+} v(\hat{t}, \hat{x})+[0, \infty) \times\{0\} \times \mathcal{S}_{+}^n=\mathcal{D}_{t+, x}^{1,2,+} v(\hat{t}, \hat{x}), \\
\mathcal{D}_{t+, x}^{1,2,-} v(\hat{t}, \hat{x})-[0, \infty) \times\{0\} \times \mathcal{S}_{+}^n=\mathcal{D}_{t+, x}^{1,2,-} v(\hat{t}, \hat{x}),
\end{array}\right.
$$
where $\mathcal{S}_{+}^n:=\left\{S \in \mathcal{S}^n | S \geqslant 0\right\}$, and $A \pm B:=\{a \pm b \mid a \in A, b \in B\}$ for any subsets $A$ and $B$ in a same Euclidean space.

We will also make use of the partial super-subjets with respect to one of the variables $t$ and $x$. Therefore, we need the following definitions.
\begin{equation}
\left\{\begin{aligned}
\mathcal{D}_x^{2,+} v(\hat{t}, \hat{x}):= & \bigg\{(p, P) \in  \mathbf{R}^n \times \mathcal{S}^n \Big| v(\hat{t}, x) \leq v(\hat{t}, \hat{x})+\langle p, x-\hat{x}\rangle \\
& +\frac{1}{2}(x-\hat{x})^\top P(x-\hat{x})+o\left(|x-\hat{x}|^2\right), \text { as } x \rightarrow \hat{x}\bigg\}, \\
\mathcal{D}_x^{2,-} v(\hat{t}, \hat{x}):= & \bigg\{(p, P) \in  \mathbf{R}^n \times \mathcal{S}^n \Big| v(\hat{t}, x) \geq v(\hat{t}, \hat{x})+\langle p, x-\hat{x}\rangle \\
& +\frac{1}{2}(x-\hat{x})^\top P(x-\hat{x})+o\left(|x-\hat{x}|^2\right), \text { as } x \rightarrow \hat{x}\bigg\},
\end{aligned}\right.
 \label{definition of partial super-subjets to x}
\end{equation}
and
\begin{equation}
\left\{\begin{array}{l}
\mathcal{D}_{t+}^{1,+} v(\hat{t}, \hat{x}) :=\Big\{q \in \mathbf{R} \big| v(t, \hat{x}) \leq v(\hat{t}, \hat{x})+q(t-\hat{t})+o(|t-\hat{t}|), \text { as } t \downarrow \hat{t}\Big\}, \\
\mathcal{D}_{t+}^{1,-} v(\hat{t}, \hat{x}) :=\Big\{q \in \mathbf{R} \big| v(t, \hat{x}) \geq v(\hat{t}, \hat{x})+q(t-\hat{t})+o(|t-\hat{t}|), \text { as } t \downarrow \hat{t}\Big\}.
\end{array}\right.
 \label{definition of partial super-subjets to t}
\end{equation}

The following results are also useful (\cite{YongZhou1999}).
\begin{mydef}\label{Definition 4.2}
Let $Z$ be a Banach space and let $z:[a, b] \rightarrow Z$ be a measurable function that is Bochner integrable. We say that $t$ is a right Lesbesgue point of $z$ if
$$
\lim _{h \rightarrow 0^{+}} \frac{1}{h} \int_t^{t+h}|z(r)-z(t)|_Z d r=0 .
$$
\end{mydef}

\begin{mylem}\label{Lebesgue point}
Let $z:[a, b] \rightarrow Z$ be as in Definition \ref{Definition 4.2}. Then the set of right Lesbesgue points of $z$ is of full measure in $[a, b]$.
\end{mylem}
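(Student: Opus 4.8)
The plan is to reduce this Banach-space valued statement to countably many applications of the classical scalar Lebesgue differentiation theorem, exploiting the fact that a Bochner integrable function has an (essentially) separable range. First I would record that, since $z$ is measurable and Bochner integrable, it is strongly measurable, and hence by the Pettis measurability theorem it is almost separably valued: there exist a Lebesgue-null set $N_0\subset[a,b]$ and a countable set $\{v_n\}_{n\geqslant 1}\subset Z$ such that $z(t)$ lies in the closure of $\{v_n\}$ for every $t\in[a,b]\setminus N_0$. For each fixed $n$, the scalar function $g_n(r):=|z(r)-v_n|_Z$ is measurable, and since $|z(\cdot)|_Z\in L^1([a,b])$ and $[a,b]$ is bounded, we have $g_n\in L^1([a,b];\mathbf{R})$.

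Next I would apply the one-sided (right) scalar Lebesgue differentiation theorem to each $g_n$, obtaining a full-measure set $E_n\subset[a,b]$ with
\[
\lim_{h\to 0^+}\frac{1}{h}\int_t^{t+h}g_n(r)\,dr=g_n(t),\qquad t\in E_n.
\]
Setting $E:=\big([a,b]\setminus N_0\big)\cap\bigcap_{n\geqslant 1}E_n$, the set $E$ is of full measure in $[a,b]$, being the intersection of countably many full-measure sets. It then remains to show that every $t\in E$ is a right Lebesgue point of $z$. Fix $t\in E$ and $\varepsilon>0$; since $z(t)$ lies in the closure of $\{v_n\}$, choose $n$ with $|z(t)-v_n|_Z<\varepsilon$. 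The triangle inequality gives, for every $r$,
\[
|z(r)-z(t)|_Z\leqslant|z(r)-v_n|_Z+|v_n-z(t)|_Z=g_n(r)+|z(t)-v_n|_Z,
\]
hence
\[
\frac{1}{h}\int_t^{t+h}|z(r)-z(t)|_Z\,dr\leqslant\frac{1}{h}\int_t^{t+h}g_n(r)\,dr+|z(t)-v_n|_Z.
\]
Letting $h\to 0^+$ and using $t\in E_n$ together with $g_n(t)=|z(t)-v_n|_Z$, I obtain
\[
\limsup_{h\to 0^+}\frac{1}{h}\int_t^{t+h}|z(r)-z(t)|_Z\,dr\leqslant 2|z(t)-v_n|_Z<2\varepsilon.
\]
Since $\varepsilon>0$ is arbitrary, the limit equals $0$, so $t$ is a right Lebesgue point, which proves the claim.

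I expect the main obstacle to be the first step, namely the passage from the vector-valued average to the scalar theory: this relies on the almost separable range property, and one must be careful that the fixed countable set $\{v_n\}$ approximates the actual value $z(t)$ at almost every $t$ (as provided by Pettis measurability), rather than only in an integral sense. Once this separability reduction is secured and the scalar right-sided differentiation theorem is invoked, the remainder is a routine $\varepsilon$-approximation and presents no real difficulty.
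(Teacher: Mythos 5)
Your proposal is correct and complete. Note that the paper itself gives no proof of this lemma: it is stated as a known result with a citation to Yong and Zhou (1999), so there is no internal argument to compare against. What you have written is precisely the standard proof that underlies that citation: strong measurability plus the Pettis measurability theorem gives an almost separably valued function, the scalar one-sided Lebesgue differentiation theorem is applied to the countable family $g_n(\cdot)=|z(\cdot)-v_n|_Z$ (each of which is measurable, since $z$ is strongly measurable, and integrable, since $|z(\cdot)|_Z\in L^1$ and $[a,b]$ is bounded), and the triangle inequality transfers the conclusion to $z$ on the full-measure intersection $E=\big([a,b]\setminus N_0\big)\cap\bigcap_n E_n$. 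You correctly identified the one genuinely nontrivial point, namely that the countable set $\{v_n\}$ must approximate $z(t)$ pointwise almost everywhere (which Pettis provides) rather than merely in an integral sense; with that secured, the $\varepsilon$-argument giving $\limsup_{h\to 0^+}\frac{1}{h}\int_t^{t+h}|z(r)-z(t)|_Z\,dr\leqslant 2|z(t)-v_n|_Z<2\varepsilon$ is exactly right, and the restriction to right-sided averages causes no difficulty since the classical differentiation theorem holds in its one-sided form. In short, your blind proof supplies the argument the paper delegates to the reference, with no gaps.
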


\subsection{Main results}

\qquad The following theorem shows that the adjoint variables $p, P$ and the value function $V$ relate to each other within the framework of the superjet and the subjet in the state variable $x$ along an optimal trajectory.

\begin{mythm}\label{nonsmooth state theorem}
Suppose (H1)-(H3) hold and let $(t,x)\in [0,T)\times \mathbf{R}^n$ be fixed. Let $(\bar{X}_\cdot^{t,x;\bar{u}},\bar{u}_\cdot)$ be an optimal pair of {\bf Problem (SROCPJ)}. Let $(p_\cdot,q_\cdot,\tilde{q}_{(\cdot,\cdot)})\in L_{\mathcal{F}}^2([t,T];\mathbf{R}^n)\times L_{\mathcal{F},p}^2([t,T];\mathbf{R}^n)\times F_p^2([t,T]\times\mathcal{E};\mathbf{R}^n)$ and $(P_\cdot,Q_\cdot,\tilde{Q}_{(\cdot,\cdot)})\in L_{\mathcal{F}}^2([t,T];\mathcal{S}^n)\times L_{\mathcal{F},p}^2([t,T];\mathcal{S}^n)\times F_p^2([t,T]\times\mathcal{E};\mathcal{S}^n)$ be the first-order and second-order adjoint processes satisfying (\ref{first-order adjoint equation}) and (\ref{second-order adjoint equation}), respectively. Then
\begin{equation}
\begin{array}{lr}
\{-p_s\}\times [-P_s,\infty) \subseteq \mathcal{D}_x^{2,+}V(s,\bar{X}_s^{t,x;\bar{u}}), & \forall  s \in[t, T],\ \mathbf{P}\text{-a.s.}, \\
\mathcal{D}_x^{2,-}V(s,\bar{X}_s^{t,x;\bar{u}}) \subseteq \{-p_s\}\times (-\infty,-P_s], & \forall  s \in[t, T],\ \mathbf{P}\text{-a.s.}.
\end{array}
 \label{relation of p,P and V}
\end{equation}
We also have
\begin{equation}
\mathcal{D}_x^{1,-}V(s,\bar{X}_s^{t,x;\bar{u}}) \subseteq \{-p_s\} \subseteq \mathcal{D}_x^{1,+}V(s,\bar{X}_s^{t,x;\bar{u}}),\quad \forall  s \in[t, T],\ \mathbf{P}\text{-a.s.}.
 \label{relation of p and DV}
\end{equation}
\end{mythm}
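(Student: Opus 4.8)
The plan is to reduce the two jet inclusions to a single second-order expansion of the cost functional along the optimal trajectory, and to read off its coefficients by duality against the two adjoint systems (\ref{first-order adjoint equation}) and (\ref{second-order adjoint equation}). Fix $s\in[t,T]$. By the generalized DPP, exactly as in the derivation of (\ref{martingale of V}), we have $V(s,\bar X_s)=-\bar Y_s$, and for any deterministic direction $h\in\mathbf R^n$ the backward-semigroup sub-optimality of $\bar u_\cdot$ on $[s,T]$ for the initial datum $\bar X_s+h$ gives, $\mathbf P$-a.s., $V(s,\bar X_s+h)\leqslant -Y_s^{s,\bar X_s+h;\bar u}$, where $Y^{s,\bar X_s+h;\bar u}$ is the backward component started from $\bar X_s+h$ at time $s$ under $\bar u_\cdot$. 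Subtracting the equality at $h=0$, the first step is the inequality
\begin{equation*}
V(s,\bar X_s+h)-V(s,\bar X_s)\leqslant -\big(Y_s^{s,\bar X_s+h;\bar u}-\bar Y_s\big),
\end{equation*}
so it suffices to expand the right-hand side to second order in $h$ as $h\to 0$, which will produce the superjet inclusion of (\ref{relation of p,P and V}) directly.

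The second step is the variational expansion. I would introduce $X^1$ solving the first-order variational FSDEP (linear, with coefficients $\bar b_x,\bar\sigma_x,\bar f_x$ and initial value $h$) and $X^2$ solving the second-order one (initial value $0$, driven by the quadratic sources $\bar b_{xx}(X^1,X^1),\bar\sigma_{xx}(X^1,X^1),\bar f_{xx}(X^1,X^1)$), and show $X_\cdot^{s,\bar X_s+h;\bar u}-\bar X_\cdot-X^1_\cdot-X^2_\cdot=o(|h|^2)$; this is where the higher moment estimates of Corollary \ref{Corollary 4.1} (hence the exponents $k=2^i$) are needed. Linearizing $g$ in $(x,y,z,\tilde z)$ gives the corresponding first- and second-order variational BSDEPs, so that $Y^{s,\bar X_s+h;\bar u}-\bar Y=Y^1+Y^2+o(|h|^2)$, with the $o(|h|^2)$ control of the remainder furnished by the stability estimate of Lemma \ref{Lemma 4.3}.

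The third step is the duality that identifies $Y^1_s$ and $Y^2_s$. Applying It\^o's formula for jump diffusions to $\langle p_r,X^1_r\rangle$ on $[s,T]$, using the terminal conditions $p_T=\phi_x(\bar X_T)$ and $Y^1_T=\langle\phi_x(\bar X_T),X^1_T\rangle$, the precise form of (\ref{first-order adjoint equation}) (in particular the $\bar g_z,\bar g_{\tilde z}$ and jump contributions) forces all drift terms to cancel and yields $Y^1_s=\langle p_s,h\rangle$. Applying It\^o's formula to $\langle p_r,X^2_r\rangle+\tfrac12\langle P_rX^1_r,X^1_r\rangle$ on $[s,T]$, the terminal datum $P_T=\phi_{xx}(\bar X_T)$ together with the many cross terms of (\ref{second-order adjoint equation})—notably $\Psi(r)D^2\bar g(r)\Psi(r)^\top$ and all the jump terms $\int_{\mathcal E}\bar f_x^\top\tilde Q\bar f_x\,\nu(de)$ and the like—are designed so that everything collapses to $Y^2_s=\tfrac12\langle P_sh,h\rangle$. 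Combining with the reduction gives $-\big(Y_s^{s,\bar X_s+h;\bar u}-\bar Y_s\big)=\langle -p_s,h\rangle+\tfrac12 h^\top(-P_s)h+o(|h|^2)$, so $(-p_s,-P_s)\in\mathcal D_x^{2,+}V(s,\bar X_s)$; since adding any $S\in\mathcal S^n_+$ to the Hessian slot preserves the defining inequality in (\ref{definition of partial super-subjets to x}), the first line of (\ref{relation of p,P and V}) follows. The second line and (\ref{relation of p and DV}) are then elementary consequences of semi-jet compatibility: for $(p',P')\in\mathcal D_x^{2,-}V(s,\bar X_s)$, subtracting its lower bound from the upper bound provided by $(-p_s,-P_s)\in\mathcal D_x^{2,+}V(s,\bar X_s)$ and letting $h\to0$ first directionally then in magnitude yields $p'=-p_s$ and $P'\leqslant -P_s$; projecting these quadratic bounds to first order gives (\ref{relation of p and DV}).

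I expect the main obstacle to be the duality bookkeeping for the jump terms in the second-order step: one must account for every It\^o correction coming from the compensated Poisson integrals, especially the quadratic-variation contributions $\langle P_r\bar f,\bar f\rangle$ and $\langle\tilde Q_{(r,e)}\bar f,\bar f\rangle$ and their interaction with the jump part of $X^2$. This is exactly the place where, as flagged in the introduction, the term $\int_{\mathcal E}\int_s^\tau\langle\tilde Q_{(r,e)}\bar f(r,e),\bar f(r,e)\rangle\nu(de)dr$ must be retained; its presence is what forces the specific jump structure of (\ref{second-order adjoint equation}) and makes the collapse to $\tfrac12\langle P_sh,h\rangle$ exact. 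A secondary technical point is upgrading the fixed-$s$, $\mathbf P$-a.s. statements to hold for all $s\in[t,T]$ simultaneously, which follows from the RCLL regularity of $\bar X,p,P$ together with the closedness of the semi-jets under the relevant limits.
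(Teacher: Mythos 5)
Your overall plan---perturb the initial state at time $s$ while keeping $\bar u_\cdot$ fixed, use the DPP inequality $V(s,\bar X_s+h)\leqslant -Y^{s,\bar X_s+h;\bar u}_s$ together with $V(s,\bar X_s)=-\bar Y_s$, and identify the first- and second-order coefficients of the expansion by It\^o duality against (\ref{first-order adjoint equation}) and (\ref{second-order adjoint equation})---is the same as the paper's, up to bookkeeping: the paper does not split the perturbed state into $X^1+X^2$ and prove exact dualities $Y^1_s=\langle p_s,h\rangle$, $Y^2_s=\tfrac12\langle P_sh,h\rangle$; it keeps $\hat X_r=X^{s,z;\bar u}_r-\bar X_r$ exact, posits $\hat Y_r=\langle p_r,\hat X_r\rangle+\tfrac12\langle P_r\hat X_r,\hat X_r\rangle$, and pushes every nonlinearity into remainders ($\varepsilon_{z1},\dots,\varepsilon_{z9}$, $C_1(r)$, $\Pi\tilde D^2g\,\Pi^\top$, $\Psi_2[\tilde D^2g-\tfrac12 D^2\bar g]\Psi_2^\top$) controlled through the BSDEP stability estimate of Lemma \ref{Lemma 4.3}. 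Either organization can be made to work, and your use of Corollary \ref{Corollary 4.1} for the higher conditional moments is exactly where the paper uses it.

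The genuine gap is the quantifier over the perturbation. For each fixed $h$ your expansion yields an inequality $\mathbf P$-a.s., with the exceptional null set depending on $h$; but $(-p_s,-P_s)\in\mathcal D_x^{2,+}V(s,\bar X_{(s,\omega)})$ is a statement at a \emph{fixed} $\omega$ about \emph{all} $x$ near $\bar X_{(s,\omega)}$, so you must remove the $h$-dependence of the null set. The paper does this in its Step 6 by (i) working with the countable family of rational $z\in\mathbf R^n$, on which a common full-measure set $\Omega_0$ is chosen; (ii) verifying that the remainder is bounded by a \emph{deterministic} modulus $\delta(|z-\bar X_s|^2)$ depending only on the size $|z-\bar X_s|$ and independent of $z$---which is why all estimates are carried out as conditional expectations under the regular conditional probability $\mathbf P(\cdot\,|\,\mathcal F_s^t)(\omega)$, see the discussion following (\ref{estimate of z 1-6}); and (iii) extending from rational $z$ to all $z$ by continuity of $V(s,\cdot)$. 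Your closing paragraph addresses uniformity in $s$ but not this uniformity in $h$, and without it the superjet inclusion in (\ref{relation of p,P and V}) does not follow. A secondary, smaller point: the term $\int_{\mathcal E}\int_s^\tau\langle\tilde Q_{(r,e)}\bar f(r,e),\bar f(r,e)\rangle\nu(de)dr$ you single out arises from the stochastic initial condition $\hat X^\tau_\tau$ in the time-variable Theorem \ref{nonsmooth time theorem}, not in the present theorem: here the initial perturbation $z-\bar X_s$ is $\mathcal F_s^t$-measurable and generates no such quadratic-covariation contribution, while the jump covariations that do appear in the duality on $[s,T]$, such as $\int_{\mathcal E}\bar f_x(s,e)^\top\tilde Q_{(s,e)}\bar f_x(s,e)\nu(de)$, are already absorbed into the structure of (\ref{second-order adjoint equation}). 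Your treatment of the subjet inclusion and of (\ref{relation of p and DV}) via the liminf compatibility argument matches the paper and is correct.
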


\begin{proof}\quad We split the proof into several steps.

Step 1: Variational equation for the SDEP.

Fix an $s\in [t,T]$. For any $z \in \mathbf{R}^n$, denote by $X_\cdot^{s,z;\bar{u}}$ the solution to the following SDEP on $[s,T]$:
\begin{equation}
\begin{aligned}
X_r^{s,z;\bar{u}}& =z+\int_s^r b\left(\alpha,X_\alpha^{s,z;\bar{u}},\bar{u}_\alpha \right)d\alpha+\int_s^r \sigma\left(\alpha,X_\alpha^{s,z;\bar{u}},\bar{u}_\alpha \right)dW_\alpha\\
&\quad +\int_s^r \int_\mathcal{E}f\left(\alpha,X_{\alpha-}^{s,z;\bar{u}},\bar{u}_\alpha,e \right)\tilde{N}(d e,d\alpha).
\end{aligned}
 \label{SDEP with z}
\end{equation}
It is clear that (\ref{SDEP with z}) can be regarded as an SDEP on $(\Omega,\mathcal{F},\{\mathcal{F}_r^t\}_{r\geq t},\mathbf{P}(\cdot \mid \mathcal{F}_s^t)(\omega))$ for $\mathbf{P}$-a.s. $\omega$, where $\mathbf{P}(\cdot | \mathcal{F}_s^t)(\omega)$ is the regular conditional probability given $\mathcal{F}_s^t$ defined on $(\Omega,\mathcal{F})$ (see \cite{IkedaWatanabe1981}). For any $s \leq r \leq T$, set $\hat{X}_r:=X_r^{s,z;\bar{u}}-\bar{X}_r^{t,x;\bar{u}}$. Thus by Corollary \ref{Corollary 4.1}, we have for any  $k=2^i,i=1,2,\cdots$,
\begin{equation}
\mathbf{E}\left[\sup _{s \leqslant r \leqslant T}\left|\hat{X}_r\right|^k|\mathcal{F}_s^t\right] \leqslant C\left|z-\bar{X}_s^{t,x;\bar{u}}\right|^k, \quad s \in[t, T].
 \label{estimate of sup X}
\end{equation}
Now we rewrite the equation for the RCLL process $\hat{X}_\cdot$ in two different ways based on different orders of expansion, which called the first-order and second-order variational equations, respectively:
\begin{equation}
\left\{\begin{aligned}
d \hat{X}_r& = \left[\bar{b}_x(r) \hat{X}_r+\varepsilon_{z 1}(r)\right] d r+\left[\bar{\sigma}_x(r) \hat{X}_r+\varepsilon_{z 2}(r)\right] d W_r \\
&\quad +\int_{\mathcal{E}} \left[\bar{f}_x(r, e) \hat{X}_{r-}+\varepsilon_{z 3}(r, e)\right] \tilde{N}(d e, d r) , \quad r \in[s, T], \\
\hat{X}_s& =z-\bar{X}_s^{t, x ; \bar{u}},
\end{aligned}\right.
 \label{first-order variational equation}
\end{equation}
where
\begin{equation}
\left\{\begin{aligned}
\varepsilon_{z 1}(r):=& \int_0^1\left[b_x\left(r, \bar{X}^{t, x ; \bar{u}}_r+\theta \hat{X}_r, \bar{u}_r\right)-\bar{b}_x(r)\right] \hat{X}_r d \theta, \\
\varepsilon_{z 2}(r):=& \int_0^1\left[\sigma_x\left(r, \bar{X}^{t, x ; \bar{u}}_r+\theta \hat{X}_r, \bar{u}_r\right)-\bar{\sigma}_x(r)\right] \hat{X}_r d \theta, \\
\varepsilon_{z 3}(r, e):=& \int_0^1\left[f_x\left(r, \bar{X}^{t, x ; \bar{u}}_{r-}+\theta \hat{X}_{r-}, \bar{u}_r, e\right)-\bar{f}_x(r,e)\right] \hat{X}_{r-} d \theta,
\end{aligned}\right.
 \label{definiton of z 1-3}
\end{equation}
and
\begin{equation}
\left\{\begin{aligned}
d \hat{X}_r & = \left[\bar{b}_x(r) \hat{X}_r+\frac{1}{2} \hat{X}_r^\top \bar{b}_{x x}(r) \hat{X}_r+\varepsilon_{z 4}(r)\right] d r \\
&\quad +\left[\bar{\sigma}_x(r) \hat{X}_r+\frac{1}{2} \hat{X}_r^\top \bar{\sigma}_{x x}(r) \hat{X}_r+\varepsilon_{z 5}(r)\right] d W_r \\
&\quad +\int_{\mathcal{E}}\left[\bar{f}_x(r, e) \hat{X}_{r-}+\frac{1}{2} \hat{X}_{r-}^\top \bar{f}_{x x}(r, e) \hat{X}_{r-}+\varepsilon_{z 6}(r, e)\right] \tilde{N}(d e,d r) , \quad r \in[s, T], \\
\hat{X}_s & = z -\bar{X}^{t, x ; \bar{u}}_s,
\end{aligned}\right.
\label{second-order variational equation}
\end{equation}
where
\begin{equation}
\left\{\begin{aligned}
\varepsilon_{z 4}(r):= & \int_0^1(1-\theta)\hat{X}_r^\top \left[b _ { x x } \left(r, \bar{X}^{t, x ; \bar{u}}_r+\theta \hat{X}_r, \bar{u}_r\right)-\bar{b}_{x x}(r)\right] \hat{X}_r d \theta, \\
\varepsilon_{z 5}(r):= & \int_0^1(1-\theta) \hat{X}_r^\top \left[\sigma _ { x x } \left(r, \bar{X}^{t, x ; \bar{u}}_r+\theta \hat{X}_r, \bar{u}_r\right)-\bar{\sigma}_{x x}(r)\right] \hat{X}_r d \theta, \\
\varepsilon_{z 6}(r, e):= & \int_0^1(1-\theta) \hat{X}_{r-}^\top \left[f _ { x x } \left(r, \bar{X}^{t, x ; \bar{u}}_{r-}+\theta \hat{X}_{r-}, \bar{u}_r, e\right)-\bar{f}_{x x}(r, e)\right] \hat{X}_{r-} d \theta .
\end{aligned}\right.
 \label{definition of z 4-6}
\end{equation}

Step 2. Estimates of remainder terms of SDEPs.

We are going to show that, there exists a deterministic continuous and increasing function $\delta:[0, \infty) \rightarrow[0, \infty)$, independent of $z \in \mathbf{R}^n$, with $\frac{\delta(\alpha)}{\alpha} \rightarrow 0$ as $\alpha \rightarrow 0$, such that
\begin{equation}
\left\{\begin{aligned}
\mathbf{E}\left[\int_s^T\left|\varepsilon_{z 1}(r)\right|^2 d r \Big| \mathcal{F}_s^t\right] \leqslant \delta\left(\left|z-\bar{X}^{t, x ; \bar{u}}_s\right|^2\right), \quad \mathbf{P}\text {-a.s.},  \\
\mathbf{E}\left[\int_s^T\left|\varepsilon_{z 2}(r)\right|^2 d r \Big| \mathcal{F}_s^t\right] \leqslant \delta\left(\left|z-\bar{X}^{t, x ; \bar{u}}_s\right|^2\right), \quad \mathbf{P}\text {-a.s.}, \\
\mathbf{E}\left[\int_s^T\left\|\varepsilon_{z 3}(r, e)\right\|_{\mathcal{L}^2}^2 d r \Big| \mathcal{F}_s^t\right] \leqslant \delta\left(\left|z-\bar{X}^{t, x ; \bar{u}}_s\right|^2\right), \quad \mathbf{P}\text {-a.s.}, \\
\mathbf{E}\left[\int_s^T\left|\varepsilon_{z 4}(r)\right|^2 d r \Big| \mathcal{F}_s^t\right] \leq \delta\left(\left|z-\bar{X}^{t, x ; \bar{u}}_s\right|^4\right), \quad \mathbf{P}\text {-a.s.}, \\
\mathbf{E}\left[\int_s^T\left|\varepsilon_{z 5}(r)\right|^2 d r \Big| \mathcal{F}_s^t\right] \leqslant \delta\left(\left|z-\bar{X}^{t, x ; \bar{u}}_s\right|^4\right), \quad \mathbf{P}\text {-a.s.},  \\
\mathbf{E}\left[\int_s^T\left\|\varepsilon_{z 6}(r, e)\right\|_{\mathcal{L}^2}^2 d r \Big| \mathcal{F}_s^t\right] \leqslant \delta\left(\left|z-\bar{X}^{t, x ; \bar{u}}_s\right|^4\right), \quad \mathbf{P}\text {-a.s.}.
\end{aligned}\right.
\label{estimate of z 1-6}
\end{equation}
Here $\mathbf{E}\left[\int_s^T\left|\varepsilon_{z 1}(r)\right|^{2} d r \big| \mathcal{F}_s^t\right]=\delta(|z-\bar{X}^{t, x ; \bar{u}}_s|^{2})$, $\mathbf{P}$-a.s., means that for $\mathbf{P}$-a.s. $\omega$ fixed, $\\\mathbf{E}\left[\int_s^T\left|\varepsilon_{z 1}(r)\right|^{2} d r \big| \mathcal{F}_s^t\right](\omega)=\delta(|z-\bar{X}^{t, x ; \bar{u}}_{(s, \omega)}|^{2})$, where $\delta(\cdot)$ is almost surely a deterministic function under the regular conditional probability $\mathbf{P}\left(\cdot | \mathcal{F}_s^t\right)(\omega)$. Moreover, $\delta(|z-\bar{X}^{t, x ; \bar{u}}_{(s, \omega)}|^{2})$ depends only on the size of $\left|z-\bar{X}^{t, x ; \bar{u}}_{(s, \omega)}\right|$, and it is independent of $z$. Such notation has a similar meaning for other estimates in (\ref{estimate of z 1-6}) as well as in what follows in the paper.
To prove conveniently, we denote
$$
\begin{aligned}
b_x(r, \theta):=&\ b_x\left(r, \bar{X}^{t, x ; \bar{u}}_r+\theta \hat{X}_r, \bar{u}_r\right), \\
\sigma_x(r, \theta):=&\ \sigma_x\left(r, \bar{X}^{t, x ; \bar{u}}_r+\theta \hat{X}_r, \bar{u}_r\right), \\
f_x(r, \theta,e):=&\ f_x\left(r, \bar{X}^{t, x ; \bar{u}}_r+\theta \hat{X}_r, \bar{u}_r,e\right),
\end{aligned}
$$
and similar notations used for all their derivatives.

Now, we start to prove (\ref{estimate of z 1-6}). By the continuity and uniformly boundedness of $b_x, b_{x x}, \sigma_x, \sigma_{x x}$, $f_x, f_{x x}$ as well as (\ref{estimate of sup X}), we have
$$
\begin{aligned}
&\mathbf{E}\left[\int_s^T\left|\varepsilon_{z 1}(r)\right|^2 d r \mid \mathcal{F}_s^t\right]
  \leqslant \int_s^T \mathbf{E}\left\{\int_0^1\left|b_x(r, \theta)-\bar{b}_x(r)\right|^2 d \theta \cdot\big|\hat{X}_r\big|^2 \Big| \mathcal{F}_s^t\right\} d r \\
& \leqslant C \int_s^T \mathbf{E}\left[\big|\hat{X}_r\big|^4 \Big| \mathcal{F}_s^t\right] d r \leqslant C\left|z-\bar{X}^{t, x ; \bar{u}}_s\right|^4 = \delta_1 \left(\left|z-\bar{X}^{t, x ; \bar{u}}_s\right|^2\right).
\end{aligned}
$$
Thus the first inequality in (\ref{estimate of z 1-6}) holds and similarly for the second one if we choose $\delta_1(r) \equiv Cr^2, r \geq 0$. And we have
$$
\begin{aligned}
& \mathbf{E}\left[\int_s^T\left\|\varepsilon_{z 3}(r, e)\right\|_{\mathcal{L}^2}^2 d r \Big| \mathcal{F}_s^t\right]
 \leqslant \int_s^T \mathbf{E}\left\{\int_0^1\left\|f_x(r, \theta, e)-\bar{f}_x(r, e)\right\|_{\mathcal{L}^2}^2 d \theta \cdot\big|\hat{X}_{r-}\big|^2 \Big| \mathcal{F}_s^t\right\} d r \\
& \leqslant C \int_s^T \mathbf{E}\left[\big|\hat{X}_{r-}\big|^4 \Big| \mathcal{F}_s^t\right] d r = C \int_s^T \mathbf{E}\left[\big|\hat{X}_r\big|^4 \Big| \mathcal{F}_s^t\right] d r
 \leqslant C\left|z-\bar{X}^{t, x ; \bar{u}}_s\right|^4 = \delta_3 \left(\left|z-\bar{X}^{t, x ; \bar{u}}_s\right|^2\right),
\end{aligned}
$$
where first equality holds because the discontinuous points of $\hat{X}_\cdot$ are at most countable. Thus, the third inequality in (\ref{estimate of z 1-6}) follows. Moreover, from the modulus continuity of $b_{x x}$ (see (H3)), we can also show that
$$
\begin{aligned}
& \mathbf{E}  {\left[\int_s^T\left|\varepsilon_{z 4}(r)\right|^2 d r \Big| \mathcal{F}_s^t\right] }
 \leqslant \int_s^T \mathbf{E}\left[\int_0^1\left|b_{x x}(r, \theta)-\bar{b}_{x x}(r)\right|^2 d \theta \cdot\big|\hat{X}_r\big|^4 \Big| \mathcal{F}_s^t\right] d r \\
& \leqslant \int_s^T\left\{\mathbf{E}\left[\varpi\left(\big|\hat{X}_r\big|^4\right) \Big| \mathcal{F}_s^t\right]\right\}^{\frac{1}{2}}
 \left\{\mathbf{E}\left[\big|\hat{X}_r\big|^8 \Big| \mathcal{F}_s^t\right]\right\}^{\frac{1}{2}} d r \\
& \leqslant C \int_s^T\left\{\mathbf{E}\left[\varpi\left(\big|\hat{X}_r\big|^4\right) \Big| \mathcal{F}_s^t\right]\right\}^{\frac{1}{2}} d r \cdot\left|z-\bar{X}^{t, x ; \bar{u}}_s\right|^4
 \leqslant \delta_4 \left(\left|z-\bar{X}^{t, x ; \bar{u}}_s\right|^4\right).
\end{aligned}
$$
Thus the fourth inequality in (\ref{estimate of z 1-6}) holds if we choose $\delta_4(r) \equiv Cr\sqrt{\varpi(r)}, r \geq 0$. The fifth inequality in (\ref{estimate of z 1-6}) can be proved similarly. For the last inequality, we can obtain that
$$
\begin{aligned}
& \mathbf{E}\left[\int_s^T\left\|\varepsilon_{z 6}(r, e)\right\|_{\mathcal{L}^2}^2 d r \Big| \mathcal{F}_s^t\right]
 \leqslant \int_s^T \mathbf{E}\left[\int_0^1\left\|f_{x x}(r, \theta,e)-\bar{f}_{x x}(r, e)\right\|_{\mathcal{L}^2}^2 d \theta \cdot\big|\hat{X}_{r-}\big|^4 \Big| \mathcal{F}_s^t\right] d r \\
& \leqslant \int_s^T\left\{\mathbf{E}\left[\varpi\left(\big|\hat{X}_{r-}\big|^4\right) \Big| \mathcal{F}_s^t\right]\right\}^{\frac{1}{2}}
 \left\{\mathbf{E}\left[\left|\hat{X}_{r-}\right|^8 \Big| \mathcal{F}_s^t\right]\right\}^{\frac{1}{2}} d r \\
& \leqslant C \int_s^T\left\{\mathbf{E}\left[\varpi\left(\big|\hat{X}_r\big|\right) \Big| \mathcal{F}_s^t\right]\right\}^{\frac{1}{2}} d r \cdot\left|z-\bar{X}^{t, x ; \bar{u}}_s\right|^4
 \leqslant \delta_6 \left(\left|z-\bar{X}^{t, x ; \bar{u}}_s\right|^4\right).
\end{aligned}
$$
Finally, we can select the largest $\delta(\cdot)$ obtained in the above six calculations. For example, we can choose an enough large constant $C>0$ and define $\delta(r) \equiv Cr(r \vee \sqrt{\varpi(r)}), r \geq 0$. Then (\ref{estimate of z 1-6}) follows with a $\delta(\cdot)$ independent of $z \in \mathbf{R}^n$.

Step 3. Duality relation.

Applying It\^{o}'s formula to $\langle p_\cdot,\hat{X}_\cdot\rangle$, by (\ref{first-order adjoint equation}), (\ref{second-order variational equation}), we have for $r\in[s,T]$,
\begin{equation}
\begin{aligned}
&d\langle p_r,\hat{X}_r\rangle =\bigg\{-\Big\langle \hat{X}_r,\bar{g}_x(r)+\bar{g}_y(r)p_r+\bar{g}_z(r)\left[\bar{\sigma}_x(r)^\top p_r+q_r\right]  \\
& \quad +\bar{g}_{\tilde{z}}(r)\int_{\mathcal{E}}\left[\bar{f}_x(r,e)^\top p_r+\tilde{q}_{(r,e)}+\bar{f}_x(r,e)^\top \tilde{q}_{(r,e)}\right]\nu (d e)\Big\rangle \\
& \quad +\frac{1}{2} \left\langle p_r,\hat{X}_r^\top \bar{b}_{x x}(r)\hat{X}_r \right\rangle +\frac{1}{2} \left\langle q_r,\hat{X}_r^\top \bar{\sigma}_{x x}(r)\hat{X}_r \right\rangle
 +\frac{1}{2} \int_{\mathcal{E}}\left\langle \tilde{q}_{(r,e)},\hat{X}_r^\top \bar{f}_{x x}(r,e)\hat{X}_r \right\rangle \nu(d e) \\
& \quad + \left\langle p_r,\varepsilon_{z 4}(r)\right\rangle +\langle q_r,\varepsilon_{z 5}(r)\rangle+\int_{\mathcal{E}}\langle \tilde{q}_{(r,e)},\varepsilon_{z 6}(r,e)\rangle \nu(d e)\bigg\}d r \\
& \quad +\bigg\{ \left\langle q_r,\hat{X}_r\right\rangle + \left\langle p_r,\bar{\sigma}_x(r)\hat{X}_r+\frac{1}{2}\hat{X}_r^\top \bar{\sigma}_{x x}(r)\hat{X}_r+\varepsilon_{z 5}(r) \right\rangle\bigg\}d W_r \\
& \quad +\int_{\mathcal{E}}\bigg\{\left\langle \tilde{q}_{(r,e)},\hat{X}_{r-}\right\rangle + \Big\langle p_r,\bar{f}_x(r,e)\hat{X}_{r-}
 +\frac{1}{2}\hat{X}_{r-}^\top \bar{f}_{x x}(r,e)\hat{X}_{r-}+\varepsilon_{z 6}(r,e)\Big\rangle  \\
& \quad +\left\langle \tilde{q}_{(r,e)},\bar{f}_x(r,e)\hat{X}_{r-}+\frac{1}{2}\hat{X}_{r-}^\top \bar{f}_{x x}(r,e)\hat{X}_{r-}+\varepsilon_{z 6}(r,e)\right\rangle\bigg\}\tilde{N}(d e,d r).
\end{aligned}
 \label{Ito's formula to pX}
\end{equation}
For $r \in[s, T]$, setting $\Phi_r:=\hat{X}_r\hat{X}_r^\top$ and applying It\^{o}'s formula again, noting (\ref{first-order variational equation}), we get
\begin{equation}
\left\{\begin{aligned}
 d \Phi_r=& \left[\bar{b}_x(r) \Phi_r+\Phi_r \bar{b}_x(r)^\top +\bar{\sigma}_x(r) \Phi_r \bar{\sigma}_x(r)^{\top}+\int_{\mathcal{E}} \bar{f}_x(r, e) \Phi_r \bar{f}_x(r, e)^\top \nu(d e)+\varepsilon_{z 7}(r)\right] d r \\
&+\left[\bar{\sigma}_x(r) \Phi_r+\Phi_r \bar{\sigma}_x(r)^\top +\varepsilon_{z 8}(r)\right] d W_r \\
&+\int_{\mathcal{E}}\left[\bar{f}_x(r, e) \Phi_{r-} \bar{f}_x(r, e)^\top +\bar{f}_x(r, e) \Phi_{r-}+\Phi_{r-} \bar{f}_x(r, e)^\top +\varepsilon_{z 9}(r, e)\right] \tilde{N}(d e, d r), \\
\Phi_s=& \ \hat{X}_s \hat{X}_s^\top,
\end{aligned}\right.
 \label{ito for Phi}
\end{equation}
where
\begin{equation}
\left\{\begin{aligned}
\varepsilon_{z 7}(r)&:=  \varepsilon_{z 1}(r) \hat{X}_r^\top +\hat{X}_r \varepsilon_{z 1}(r)^\top +\bar{\sigma}_x(r) \hat{X}_r \varepsilon_{z 2}(r)^\top +\varepsilon_{z 2}(r) \hat{X}_r^\top \bar{\sigma}_x(r)^\top \\
&\quad +\varepsilon_{z 2}(r) \varepsilon_{z 2}(r)^\top +\int_{\mathcal{E}}\left\{\bar{f}_x(r, e) \hat{X}_r \varepsilon_{z 3}(r, e)^\top \right.\\
&\quad \left.+\varepsilon_{z 3}(r, e) \hat{X}_r^\top \bar{f}_x(r, e)^\top +\varepsilon_{z 3}(r, e) \varepsilon_{z 3}(r, e)^\top \right\} \nu(d e), \\
\varepsilon_{z 8}(r)&:= \varepsilon_{z 2}(r) \hat{X}_r^\top +\hat{X}_r \varepsilon_{z 2}(r)^\top, \\
\varepsilon_{z 9}(r, e)&:= \bar{f}_x(r, e) \hat{X}_{r-} \varepsilon_{z 3}(r, e)^\top +\varepsilon_{z 3}(r, e) \hat{X}_{r-}^\top \bar{f}_x(r, e)^\top +\hat{X}_{r-} \varepsilon_{z 3}(r, e)^\top \\
&\quad +\varepsilon_{z 3}(r, e) \hat{X}_{r-}^\top +\varepsilon_{z 3}(r, e) \varepsilon_{z 3}(r, e)^\top .
\end{aligned}\right.
 \label{definition of z 7-9}
\end{equation}
Once more applying It\^{o}'s formula to $\operatorname{tr}\big\{P_\cdot\Phi_\cdot\big\}$, using (\ref{second-order adjoint equation}), we obtain for $r\in[s,T]$,
\begin{equation}
\begin{aligned}
& d\operatorname{tr}\big\{P_r\Phi_r\big\}
  = \operatorname{tr}\bigg\{-\Phi_r\bar{g}_y(r)P_r-\Phi_r\bar{g}_z(r)\bar{\sigma}_x(r)^\top P_r-\Phi_rP_r\bar{g}_z(r)\bar{\sigma}_x(r)-\Phi_r\bar{g}_z(r)Q_r \\
& \quad -\Phi_r\bar{b}_{x x}(r)^\top p_r-\Phi_r\bar{\sigma}_{x x}(r)^\top \left[\bar{g}_z(r)p_r+q_r\right]+P_r\varepsilon_{z 7}(r)+Q_r\varepsilon_{z 8}(r)  \\
& \quad -\Phi_r \cdot \Big[I_{n\times n},p_r,\bar{\sigma}_x(r)^\top p_r+q_r,\int_{\mathcal{E}}\left(\bar{f}_x(r,e)^\top p_r+\tilde{q}_{(r,e)}+\bar{f}_x(r,e)^\top \tilde{q}_{r,e)}\right)\nu(de)\Big] \\
& \quad \cdot D^2\bar{g}(r)\Big[I_{n\times n},p_r,\bar{\sigma}_x(r)^\top p_r+q_r,\int_{\mathcal{E}}\left(\bar{f}_x(r,e)^\top p_r+\tilde{q}_{(r,e)}+\bar{f}_x(r,e)^\top \tilde{q}_{(r,e)}\right)\nu(de)\Big]^\top \\
& \quad -\int_{\mathcal{E}}\Big[\Phi_r\bar{f}_{x x}(r,e)^\top \tilde{q}_{(r,e)}+\Phi_r\bar{f}_{x x}(r,e)^\top \bar{g}_{\tilde{z}}(r)p_r+\Phi_r\bar{f}_{x x}(r,e)^\top \bar{g}_{\tilde{z}}(r)\tilde{q}_{(r,e)} \\
& \quad +\Phi_r\bar{g}_{\tilde{z}}(r)\bar{f}_x(r,e)^\top P_r+\Phi_rP_r\bar{g}_{\tilde{z}}(r)\bar{f}_x(r,e)+\Phi_r\bar{g}_{\tilde{z}}(r)\bar{f}_x(r,e)^\top \tilde{Q}_{(r,e)} \\
& \quad +\Phi_r\bar{g}_{\tilde{z}}(r)\tilde{Q}_{(r,e)}\bar{f}_x(r,e)+\Phi_r\bar{g}_{\tilde{z}}(r)\bar{f}_x(r,e)^\top \tilde{Q}_{(r,e)}\bar{f}_x(r,e) \\
& \quad +\Phi_r\bar{g}_{\tilde{z}}(r)\bar{f}_x(r,e)^\top P_r\bar{f}_x(r,e)+\Phi_r\bar{g}_{\tilde{z}}(r)\tilde{Q}_{(r,e)}-\tilde{Q}_{(r,e)}\varepsilon_{z 9}(r,e)\Big]\nu(de)\bigg\}dr \\
& \quad +\operatorname{tr}\bigg\{\Phi_rQ_r+P_r\Phi_r\bar{\sigma}_x(r)^\top +P_r\bar{\sigma}_x(r)\Phi_r+P_r\varepsilon_{z 8}(r)\bigg\}dW_r \\
& \quad +\operatorname{tr}\bigg\{\int_{\mathcal{E}}\Big[\Phi_{r-}\tilde{Q}_{(r,e)}+P_r\bar{f}_x(r,e)\Phi_{r-}+P_r\Phi_{r-}\bar{f}_x(r,e)^\top +P_r\bar{f}_x(r,e)\Phi_{r-}\bar{f}_x(r,e)^\top \\
& \quad +\tilde{Q}_{(r,e)}\bar{f}_x(r,e)\Phi_{r-}+\tilde{Q}_{(r,e)}\Phi_{r-}\bar{f}_x(r,e)^\top +\tilde{Q}_{(r,e)}\bar{f}_x(r,e)\Phi_{r-}\bar{f}_x(r,e)^\top \\
& \quad +P_r\varepsilon_{z 9}(r, e)+\tilde{Q}_{(r,e)}\varepsilon_{z 9}(r, e)\Big]\bigg\}\tilde{N}(d e,d r).
\end{aligned}
 \label{Ito's formula to PPhi}
\end{equation}
By (\ref{Ito's formula to pX}) and (\ref{Ito's formula to PPhi}), for $\hat{Y}_r:=\left\langle p_r,\hat{X}_r\right\rangle+\frac{1}{2}\left\langle P_r\hat{X}_r,\hat{X}_r\right\rangle$, we obtain
\begin{equation}
d\hat{Y}_r=C(r)dr+\hat{Z}_rd W_r+\int_{\mathcal{E}}\hat{\tilde{Z}}_{(r,e)}\tilde{N}(d e,d r),\ \ \ \ r\in [s,T],
 \label{definition of hatY}
\end{equation}
where
\begin{equation*}
\begin{aligned}
 C(r)& := -\left\langle \hat{X}_r,\bar{g}_x(r)+\bar{g}_y(r)p_r+\bar{g}_z(r)\left(\bar{\sigma}_x(r)^\top p_r+q_r\right)+\bar{g}_{\tilde{z}}(r)\int_{\mathcal{E}}\Big[\bar{f}_x(r,e)^\top p_r +\tilde{q}_{(r,e)}\right. \\
& \quad +\bar{f}_x(r,e)^\top \tilde{q}_{(r,e)}\Big]\nu (d e)\bigg\rangle+ \langle p_r,\varepsilon_{z 4}(r)\rangle +\langle q_r,\varepsilon_{z 5}(r)\rangle
 +\int_{\mathcal{E}}\langle \tilde{q}_{(r,e)},\varepsilon_{z 6}(r,e)\rangle \nu(d e)\\
& \quad +\operatorname{tr}\bigg\{-\frac{1}{2}\Phi_r\bar{g}_y(r)P_r-\frac{1}{2}\Phi_r\bar{g}_z(r)\bar{\sigma}_x(r)^\top P_r-\frac{1}{2}\Phi_rP_r\bar{g}_z(r)\bar{\sigma}_x(r)-\frac{1}{2}\Phi_r\bar{g}_z(r)Q_r\\
& \quad -\frac{1}{2}\Phi_r\bar{\sigma}_{x x}(r)^\top \bar{g}_z(r)p_r+\frac{1}{2}P_r\varepsilon_{z 7}(r)+\frac{1}{2}Q_r\varepsilon_{z 8}(r)-\frac{1}{2}\Phi_r\Psi(r) D^2\bar{g}(r)\Psi(r)^\top \\
& \quad -\int_{\mathcal{E}}\left[\frac{1}{2}\Phi_r\bar{f}_{x x}(r,e)^\top \bar{g}_{\tilde{z}}(r)p_r+\frac{1}{2}\Phi_r\bar{f}_{x x}(r,e)^\top \bar{g}_{\tilde{z}}(r)\tilde{q}_{(r,e)}
 +\frac{1}{2}\Phi_r\bar{g}_{\tilde{z}}(r)\bar{f}_x(r,e)^\top P_r \right.\\
& \quad +\frac{1}{2}\Phi_rP_r\bar{g}_{\tilde{z}}(r)\bar{f}_x(r,e)+\frac{1}{2}\Phi_r\bar{g}_{\tilde{z}}(r)\bar{f}_x(r,e)^\top \tilde{Q}_{(r,e)}+\frac{1}{2}\Phi_r\bar{g}_{\tilde{z}}(r)\tilde{Q}_{(r,e)}\bar{f}_x(r,e)\\
\end{aligned}
\end{equation*}
\begin{equation}
\begin{aligned}
& \quad +\frac{1}{2}\tilde{Q}_{(r,e)}\varepsilon_{z 9}(r,e)+\frac{1}{2}\Phi_r\bar{g}_{\tilde{z}}(r)\bar{f}_x(r,e)^\top \tilde{Q}_{(r,e)}\bar{f}_x(r,e)\\
& \quad \left.+\frac{1}{2}\Phi_r\bar{g}_{\tilde{z}}(r)\bar{f}_x(r,e)^\top {P}_r\bar{f}_x(r,e)+\frac{1}{2}\Phi_r\bar{g}_{\tilde{z}}(r)\tilde{Q}_{(r,e)}\right]\nu(de)\bigg\}, \\
 \hat{Z}_r &:=\left\langle q_r,\hat{X}_r\right\rangle + \left\langle p_r,\bar{\sigma}_x(r)\hat{X}_r+\frac{1}{2}\hat{X}_r^\top \bar{\sigma}_{x x}(r)\hat{X}_r+\varepsilon_{z 5}(r)\right\rangle \\
& \quad +\operatorname{tr}\bigg\{\frac{1}{2}\Phi_rQ_r+\frac{1}{2}P_r\Phi_r\bar{\sigma}_x(r)^\top +\frac{1}{2}P_r\bar{\sigma}_x(r)\Phi_r+\frac{1}{2}P_r\varepsilon_{z 8}(r)\bigg\}, \\
 \hat{\tilde{Z}}_{(r,e)} &:=\left\langle \tilde{q}_{(r,e)},\hat{X}_{r-}\right\rangle + \left\langle p_r,\bar{f}_x(r,e)\hat{X}_{r-}+\frac{1}{2}\hat{X}_{r-}^\top \bar{f}_{x x}(r,e)\hat{X}_{r-}+\varepsilon_{z 6}(r,e)\right\rangle \\
& \quad +\left\langle \tilde{q}_{(r,e)},\bar{f}_x(r,e)\hat{X}_{r-}\right\rangle +\left\langle \tilde{q}_{(r,e)},\frac{1}{2}\hat{X}_{r-}^\top \bar{f}_{x x}(r,e)\hat{X}_{r-}+\varepsilon_{z 6}(r,e)\right\rangle \\
& \quad +\operatorname{tr}\bigg\{\frac{1}{2}\Phi_{r-}\tilde{Q}_{(r,e)}+\frac{1}{2}P_r\bar{f}_x(r,e)\Phi_{r-}+\frac{1}{2}P_r\Phi_{r-}\bar{f}_x(r,e)^\top  \\
& \quad +\frac{1}{2}P_r\bar{f}_x(r,e)\Phi_{r-}\bar{f}_x(r,e)^\top +\frac{1}{2}\tilde{Q}_{(r,e)}\bar{f}_x(r,e)\Phi_{r-}+\frac{1}{2}\tilde{Q}_{(r,e)}\Phi_{r-}\bar{f}_x(r,e)^\top \\
& \quad +\frac{1}{2}\tilde{Q}_{(r,e)}\bar{f}_x(r,e)\Phi_{r-}\bar{f}_x(r,e)^\top +\frac{1}{2}P_r\varepsilon_{z 9}(r, e)+\frac{1}{2}\tilde{Q}_{(r,e)}\varepsilon_{z 9}(r, e) \bigg\}.
\end{aligned}
 \label{definition of C and hatZ}
\end{equation}

Step 4. Variational equation for the BSDEP.

For the above $z \in \mathbf{R}^n$, recall that $X^{s, z ; \bar{u}}_\cdot$ is given by (\ref{SDEP with z}) and denote by $\left(Y^{s, z ; \bar{u}}_\cdot, Z^{s, z ; \bar{u}}_\cdot,\tilde{Z}_{(\cdot,\cdot)}^{s,z;\bar{u}}\right)$ the solution to the following BSDEP on $[s, T]$ :
\begin{equation}
\begin{aligned}
Y_r^{s, z ; \bar{u}}&= \phi\left(X_T^{s,z;\bar{u}}\right)
 +\int_r^Tg\left(\alpha, X_\alpha^{s, z ; \bar{u}}, Y_\alpha^{s, z ; \bar{u}}, Z_\alpha^{s, z ; \bar{u}}, \tilde{Z}_{(\alpha,\cdot)}^{s,z;\bar{u}}, \bar{u}_\alpha\right) d \alpha  \\
&\quad -\int_r^TZ_\alpha^{s, z ; \bar{u}} d W_\alpha -\int_r^T\int_{\mathcal{E}}\tilde{Z}_{(\alpha,e)}^{s,z;\bar{u}} \tilde{N}(d e, d \alpha), \\
\end{aligned}
 \label{BSDEP with z}
\end{equation}
and similarly (\ref{BSDEP with z}) is a BSDEP on $\left(\Omega, \mathcal{F},\left\{\mathcal{F}_r^t\right\}_{r \geq t}, \mathbf{P}\left(\cdot | \mathcal{F}_s^t\right)(\omega)\right)$ for $\mathbf{P}$-a.s. $\omega$.
For any $s \leqslant r \leqslant T$, set
\begin{equation}
\begin{aligned}
\Delta{Y}_r:= Y^{s, z ; \bar{u}}_r-\hat{Y}_r, \quad \Delta{Z}_r:= Z^{s, z ; \bar{u}}_r-\hat{Z}_r, \quad \Delta{\tilde{Z}}_{(r,e)}:= \tilde{Z}_{(r,e)}^{s,z;\bar{u}}-\hat{\tilde{Z}}_{(r,e)}.
\end{aligned}
 \label{definition of DeltaYZZ}
\end{equation}
Thus by (\ref{definition of hatY}) and (\ref{BSDEP with z}), we get for all $r\in[s,T]$,
\begin{equation}
\left\{\begin{aligned}
d\left(\Delta{Y}_r-\bar{Y}_r^{t,x;\bar{u}}\right) =& -\left[C(r)+g\left(r, X_r^{s, z ; \bar{u}}, Y_r^{s, z ; \bar{u}}, Z_r^{s, z ; \bar{u}}, \tilde{Z}_{(r,\cdot)}^{s,z;\bar{u}}, \bar{u}_r\right) -\bar{g}(r)\right]d r \\
& +\Big(\Delta{Z}_r-\bar{Z}_r^{t,x;\bar{u}}\Big)dW_r+\int_{\mathcal{E}}\left(\Delta{\tilde{Z}}_{(r,e)}-\bar{\tilde{Z}}_{(r,e)}^{t,x;\bar{u}}\right)\tilde{N}(d e,d r), \\
\Delta{Y}_T-\bar{Y}_T^{t,x;\bar{u}} =& \int_0^1(1-\theta)\hat{X}_T^\top \left[\phi_{x x}(\bar{X}_T^{t,x;\bar{u}}+\theta\hat{X}_T)-\phi_{x x}(\bar{X}_T^{t,x;\bar{u}})\right]\hat{X}_Td\theta.
\end{aligned}\right.
 \label{definition of DeltaY-barY}
\end{equation}
By the modulus continuity of $\phi_{x x}$, we have
$$
\begin{aligned}
& \mathbf{E}\left[\left|\Delta{Y}_T-\bar{Y}_T^{t, x ; \bar{u}}\right| \Big| \mathcal{F}_s^t\right] \\
& \leqslant\left\{\mathbf{E}\left[\left(\int_0^1\left|\phi_{x x}\left(\bar{X}_T^{t, x ; \bar{u}}+\theta \hat{X}_T\right)
 -\phi_{x x}\left(\bar{X}_T^{t, x ; \bar{u}}\right)\right| d \theta\right)^2 \Big| \mathcal{F}_s^t\right]\right\}^{\frac{1}{2}}
 \cdot\left\{\mathbf{E}\left[\big|\hat{X}_T\big|^4 \Big| \mathcal{F}_s^t\right]\right\}^{\frac{1}{2}} \\
& =\delta \left(\left|z-\bar{X}_s^{t, x ; \bar{u}}\right|^2\right),\quad \mathbf{P}\text {-a.s.}.
\end{aligned}
$$
Noting (\ref{definition of DeltaYZZ}), we have for all $r\in[s,T]$,
$$
\begin{aligned}
& g\left(r, X_r^{s, z ; \bar{u}}, Y_r^{s, z ; \bar{u}}, Z_r^{s, z ; \bar{u}}, \tilde{Z}_{(r,\cdot)}^{s,z;\bar{u}}, \bar{u}_r\right) -\bar{g}(r) \\
\equiv &\ g\left(r, X_r^{s, z ; \bar{u}}, Y_r^{s, z ; \bar{u}}, Z_r^{s, z ; \bar{u}}, \tilde{Z}_{(r,\cdot)}^{s,z;\bar{u}}, \bar{u}_r\right)
 -g\left(r, \bar{X}_r^{t, x ; \bar{u}}, \bar{Y}_r^{t, x ; \bar{u}}, \bar{Z}_r^{t, x ; \bar{u}}, \bar{\tilde{Z}}_{(r,\cdot)}^{t,x;\bar{u}},\bar{u}_r\right) \\
=&\ g\left(r,\bar{X}_r^{t, x ; \bar{u}}+\hat{X}_r,\Delta{Y}_r+\hat{Y}_r,\Delta{Z}_r+\hat{Z}_r,\Delta{\tilde{Z}}_{(r,\cdot)}+\hat{\tilde{Z}}_{(r,\cdot)},\bar{u}_r\right) \\
& -g\left(r,\bar{X}_r^{t, x ; \bar{u}}+\hat{X}_r,\bar{Y}_r^{t, x ; \bar{u}}+\hat{Y}_r,\bar{Z}_r^{t, x ; \bar{u}}+\hat{Z}_r,\bar{\tilde{Z}}_{(r,\cdot)}^{t,x;\bar{u}}+\hat{\tilde{Z}}_{(r,\cdot)},\bar{u}_r\right) \\
& +g\left(r,\bar{X}_r^{t, x ; \bar{u}}+\hat{X}_r,\bar{Y}_r^{t, x ; \bar{u}}+\hat{Y}_r,\bar{Z}_r^{t, x ; \bar{u}}+\hat{Z}_r,\bar{\tilde{Z}}_{(r,\cdot)}^{t,x;\bar{u}}+\hat{\tilde{Z}}_{(r,\cdot)},\bar{u}_r\right) \\
& -g\left(r, \bar{X}_r^{t, x ; \bar{u}}, \bar{Y}_r^{t, x ; \bar{u}}, \bar{Z}_r^{t, x ; \bar{u}}, \bar{\tilde{Z}}_{(r,\cdot)}^{t,x;\bar{u}},\bar{u}_r\right) \\
=&\ \tilde{g}_y(r,\theta)\left(\Delta{Y}_r-\bar{Y}_r^{t, x ; \bar{u}}\right)+\tilde{g}_z(r,\theta)\left(\Delta{Z}_r-\bar{Z}_r^{t, x ; \bar{u}}\right) \\
& +\tilde{g}_{\tilde{z}}(r,\theta)\int_{\mathcal{E}}\left(\Delta{\tilde{Z}}_{(r,e)}-\bar{\tilde{Z}}_{(r,e)}^{t, x ; \bar{u}}\right)\nu(de) +\bar{g}_x(r)\hat{X}_r+\bar{g}_y(r)\hat{Y}_r+\bar{g}_z(r)\hat{Z}_r\\
& +\bar{g}_{\tilde{z}}(r)\int_{\mathcal{E}}\hat{\tilde{Z}}_{(r,\cdot)}\nu(d e)
 +\left[\hat{X}_r^\top,\hat{Y}_r,\hat{Z}_r,\int_{\mathcal{E}}\hat{\tilde{Z}}_{(r,\cdot)}\nu(d e)\right]\tilde{D}^2g(r)\left[\hat{X}_r^\top,\hat{Y}_r,\hat{Z}_r,\int_{\mathcal{E}}\hat{\tilde{Z}}_{(r,\cdot)}\nu(d e)\right]^\top,
\end{aligned}
$$
where
$$
\left\{\begin{aligned}
\tilde{g}_y(r,\theta):=& \int_0^1g_y\left(r,\bar{X}_r^{t, x ; \bar{u}}+\hat{X}_r,\bar{Y}_r^{t, x ; \bar{u}}+\hat{Y}_r+\theta\left(\Delta{Y}_r-\bar{Y}_r^{t, x ; \bar{u}}\right),\bar{Z}_r^{t, x ; \bar{u}}+\hat{Z}_r\right. \\
& \qquad \left.+\theta\left(\Delta{Z}_r-\bar{Z}_r^{t, x ; \bar{u}}\right),\bar{\tilde{Z}}_{(r,\cdot)}^{t,x;\bar{u}}+\hat{\tilde{Z}}_{(r,\cdot)}
 +\theta\left(\Delta{\tilde{Z}}_{(r,\cdot)}-\bar{\tilde{Z}}_{(r,\cdot)}^{t, x ; \bar{u}}\right),\bar{u}_r\right)d \theta, \\
\tilde{g}_z(r,\theta):=& \int_0^1g_z\left(r,\bar{X}_r^{t, x ; \bar{u}}+\hat{X}_r,\bar{Y}_r^{t, x ; \bar{u}}+\hat{Y}_r+\theta\left(\Delta{Y}_r-\bar{Y}_r^{t, x ; \bar{u}}\right),\bar{Z}_r^{t, x ; \bar{u}}+\hat{Z}_r\right. \\
& \qquad \left.+\theta\left(\Delta{Z}_r-\bar{Z}_r^{t, x ; \bar{u}}\right),\bar{\tilde{Z}}_{(r,\cdot)}^{t,x;\bar{u}}+\hat{\tilde{Z}}_{(r,\cdot)}
 +\theta\left(\Delta{\tilde{Z}}_{(r,\cdot)}-\bar{\tilde{Z}}_{(r,\cdot)}^{t, x ; \bar{u}}\right),\bar{u}_r\right)d \theta, \\
\tilde{g}_{\tilde{z}}(r,\theta):=& \int_0^1g_{\tilde{z}}\left(r,\bar{X}_r^{t, x ; \bar{u}}+\hat{X}_r,\bar{Y}_r^{t, x ; \bar{u}}+\hat{Y}_r
 +\theta\left(\Delta{Y}_r-\bar{Y}_r^{t, x ; \bar{u}}\right),\bar{Z}_r^{t, x ; \bar{u}}+\hat{Z}_r\right. \\
& \qquad \left.+\theta\left(\Delta{Z}_r-\bar{Z}_r^{t, x ; \bar{u}}\right),\bar{\tilde{Z}}_{(r,\cdot)}^{t,x;\bar{u}}+\hat{\tilde{Z}}_{(r,\cdot)}
 +\theta\left(\Delta{\tilde{Z}}_{(r,\cdot)}-\bar{\tilde{Z}}_{(r,\cdot)}^{t, x ; \bar{u}}\right),\bar{u}_r\right)d \theta, \\
\tilde{D}^2g(r):=& \int_0^1\int_0^1 \lambda D^2g\left(r,\bar{X}_r^{t, x ; \bar{u}}+\lambda\theta\hat{X}_r,\bar{Y}_r^{t, x ; \bar{u}}+\lambda\theta\hat{Y}_r,\right.\\
&\qquad\qquad \left.\bar{Z}_r^{t, x ; \bar{u}}+\lambda\theta\hat{Z}_r,\bar{\tilde{Z}}_{(r,\cdot)}^{t, x ; \bar{u}}+\lambda\theta\hat{\tilde{Z}}_{(r,\cdot)},\bar{u}_r\right)d\lambda d\theta.
\end{aligned}\right.
$$
Using the definition of $\hat{Y}_r, \hat{Z}_r$ and $\hat{\tilde{Z}}_{(r,e)}$, and denoting the ($n+3$)-dimensional random vectors as, for $r\in[s,T]$,
$$
\begin{aligned}
\Psi_1(r):=& \left[\hat{X}_r^\top,\hat{Y}_r,\hat{Z}_r,\int_{\mathcal{E}}\hat{\tilde{Z}}_{(r,\cdot)}\nu(d e)\right], \\
\Psi_2(r):=& \left[\hat{X}_r^\top,\hat{X}_r^\top p_r,\hat{X}_r^\top\left(\bar{\sigma}_x(r)^\top p_r
 +q_r\right),\hat{X}_r^\top \int_{\mathcal{E}}\left(\bar{f}_x(r,e)^\top p_r+\tilde{q}_{(r,e)}+\bar{f}_x(r,e)^\top \tilde{q}_{(r,e)}\right)\nu(de)\right],
\end{aligned}
$$
we have
\begin{equation}
\begin{aligned}
& \Delta{Y}_s-\bar{Y}_s^{t, x ; \bar{u}}= \delta\left(|z-\bar{X}_s^{t, x ; \bar{u}}|^2\right)+\int_s^T\bigg\{\tilde{g}_y(r,\theta)\left(\Delta{Y}_r-\bar{Y}_r^{t, x ; \bar{u}}\right)
 +\tilde{g}_z(r,\theta)\left(\Delta{Z}_r-\bar{Z}_r^{t, x ; \bar{u}}\right) \\
& \quad +\tilde{g}_{\tilde{z}}(r,\theta)\int_{\mathcal{E}}\left(\Delta{\tilde{Z}}_{(r,e)}-\bar{\tilde{Z}}_{(r,e)}^{t, x ; \bar{u}}\right)\nu(de)+\Psi_1(r)\tilde{D}^2g(r)\Psi_1(r)^\top
 -\frac{1}{2}\Psi_2(r)D^2\bar{g}(r)\Psi_2(r)^\top \\
& \quad +C_1(r)\bigg\}dr-\int_s^T\left(\Delta{Z}_r-\bar{Z}_r^{t, x ; \bar{u}}\right)d W_r
 -\int_s^T\int_{\mathcal{E}}\left(\Delta{\tilde{Z}}_{(r,e)}-\bar{\tilde{Z}}_{(r,e)}^{t, x ; \bar{u}}\right)\tilde{N}(d e,d r),\  \mathbf{P}\text{-a.s.}.
\end{aligned}
 \label{DeltaY-barY}
\end{equation}
where
$$
\begin{aligned}
C_1(r):=&\ \langle p_r,\varepsilon_{z 4}(r)\rangle+\langle q_r,\varepsilon_{z 5}(r)\rangle+\bar{g}_z(r)\langle p_r,\varepsilon_{z 5}(r)\rangle
 +\int_{\mathcal{E}}\left\langle \tilde{q}_{(r,e)},\varepsilon_{z 6}(r,e)\right\rangle\nu(de) \\
& +\frac{1}{2}\operatorname{tr}\Big\{P_r\varepsilon_{z 7}(r)+Q_r\varepsilon_{z 8}(r)+\bar{g}_z(r)P_r\varepsilon_{z 8}(r)\Big\}
 +\int_{\mathcal{E}}\frac{1}{2}\operatorname{tr}\Big\{\tilde{Q}_{(r,e)}\varepsilon_{z 9}(r,e)\Big\}\nu(de) \\
& +\int_{\mathcal{E}}\bigg\{\bar{g}_{\tilde{z}}(r)\langle p_r,\varepsilon_{z 6}(r,e)\rangle+\bar{g}_{\tilde{z}}(r)\left\langle \tilde{q}_{(r,e)},\varepsilon_{z 6}(r,e)\right\rangle \\
& \qquad +\frac{1}{2}\operatorname{tr}\left\{\bar{g}_{\tilde{z}}(r)P_r\varepsilon_{z 9}(r,e)+\bar{g}_{\tilde{z}}(r)\tilde{Q}_{(r,e)}\varepsilon_{z 9}(r,e)\right\}\bigg\}\nu(de).
\end{aligned}
$$

Step 5. Estimates of remainder terms of BSDEPs.

Noting that for all $r\in[s,T]$,
$$
\begin{aligned}
& \Psi_1(r)\tilde{D}^2g(r)\Psi_1(r)^\top-\frac{1}{2}\Psi_2(r)D^2\bar{g}(r)\Psi_2(r)^\top \\
\equiv&\ \Psi_1(r)\tilde{D}^2g(r)\Psi_1(r)^\top-\Psi_2(r)\tilde{D}^2g(r)\Psi_2(r)^\top+\Psi_2(r)\tilde{D}^2g(r)\Psi_2(r)^\top-\frac{1}{2}\Psi_2(r)D^2\bar{g}(r)\Psi_2(r)^\top \\
:=&\ \Pi(r)\tilde{D}^2g(r)\Pi(r)^\top +\Psi_2(r)\Big[\tilde{D}^2g(r)-\frac{1}{2}D^2\bar{g}(r)\Big]\Psi_2(r)^\top,
\end{aligned}
$$
where $\Pi(r):=\Psi_1(r)-\Psi_2(r)$, we can get
\begin{equation}
\left\{\begin{aligned}
& \mathbf{E}\left[\left(\int_s^T\left|C_1(r)\right| d r\right)^2 \bigg| \mathcal{F}_s^t\right]\leqslant\delta\left(\left|z-\bar{X}^{t, x ; \bar{u}}_s\right|^4\right),\ \mathbf{P} \text {-a.s.}, \\
& \mathbf{E}\left[\left(\int_s^T\left|\Pi(r) \tilde{D}^2 g(r) \Pi(r)^\top\right| d r\right)^2 \bigg| \mathcal{F}_s^t\right]\leqslant\delta\left(\left|z-\bar{X}^{t, x ; \bar{u}}_s\right|^4\right),\ \mathbf{P} \text {-a.s.}, \\
& \mathbf{E}\left[\left(\int_s^T\left|\Psi_2(r)\left[\tilde{D}^2 g(r)-\frac{1}{2} D^2 \bar{g}(r)\right] \Psi_2(r)^ \top \right| d r\right)^2 \bigg| \mathcal{F}_s^t\right]
 \leqslant \delta\left(\left|z-\bar{X}^{t, x ; \bar{u}}_s\right|^4\right),\ \mathbf{P} \text {-a.s.}.
\end{aligned}\right.
 \label{estimates of remainder terms of BSDEPs}
\end{equation}
Indeed, by the boundedness of $g_z, g_{\tilde{z}}$, we obtain
$$
\begin{aligned}
& \mathbf{E}\left[\left(\int_s^T\left|C_1(r)\right| d r\right)^2 \bigg| \mathcal{F}_s^t\right] \\
\leqslant &\ C\mathbf{E}\left[\left(\int_s^T\langle p_r,\varepsilon_{z 4}(r)\rangle d r\right)^2 \bigg| \mathcal{F}_s^t\right]
 +C\mathbf{E}\left[\left(\int_s^T\langle q_r,\varepsilon_{z 5}(r)\rangle d r\right)^2 \bigg| \mathcal{F}_s^t\right] \\
& +C\mathbf{E}\left[\left(\int_s^T\langle p_r,\varepsilon_{z 5}(r)\rangle d r\right)^2 \bigg| \mathcal{F}_s^t\right]
 +C\mathbf{E}\left[\left(\int_s^T\int_{\mathcal{E}}\left\langle \tilde{q}_{(r,e)},\varepsilon_{z 6}(r,e)\right\rangle\nu(de) d r\right)^2 \bigg| \mathcal{F}_s^t\right] \\
& +C\mathbf{E}\left[\left(\int_s^T\operatorname{tr}\Big\{P_r\varepsilon_{z 7}(r)\Big\} d r\right)^2 \bigg| \mathcal{F}_s^t\right]
 +C\mathbf{E}\left[\left(\int_s^T\operatorname{tr}\Big\{Q_r\varepsilon_{z 8}(r)\Big\} d r\right)^2 \bigg| \mathcal{F}_s^t\right] \\
& +C\mathbf{E}\left[\left(\int_s^T\operatorname{tr}\Big\{P_r\varepsilon_{z 8}(r)\Big\} d r\right)^2 \bigg| \mathcal{F}_s^t\right]
 +C\mathbf{E}\left[\left(\int_s^T\int_{\mathcal{E}}\operatorname{tr}\Big\{\tilde{Q}_{(r,e)}\varepsilon_{z 9}(r,e)\Big\}\nu(de) d r\right)^2 \bigg| \mathcal{F}_s^t\right] \\
& +C\mathbf{E}\left[\left(\int_s^T\int_{\mathcal{E}}\langle p_r,\varepsilon_{z 6}(r,e)\rangle \nu(de)d r\right)^2 \bigg| \mathcal{F}_s^t\right]
 +C\mathbf{E}\left[\left(\int_s^T\int_{\mathcal{E}}\langle \tilde{q}_{(r,e)},\varepsilon_{z 6}(r,e)\rangle \nu(de)d r\right)^2 \bigg| \mathcal{F}_s^t\right] \\
& +C\mathbf{E}\left[\left(\int_s^T\int_{\mathcal{E}}\operatorname{tr}\Big\{P_r\varepsilon_{z 9}(r,e)\Big\} d r\right)^2 \bigg| \mathcal{F}_s^t\right]
 +C\mathbf{E}\left[\left(\int_s^T\int_{\mathcal{E}}\operatorname{tr}\Big\{\tilde{Q}_{(r,e)}\varepsilon_{z 9}(r,e)\Big\} \nu(de)d r\right)^2 \bigg| \mathcal{F}_s^t\right] \\
:= & I_1+I_2+I_3+I_4+I_5+I_6+I_7+I_8+I_9+I_{10}+I_{11}+I_{12}.
\end{aligned}
$$
We give the estimates of $\varepsilon_{z 7}(r),\varepsilon_{z 8}(r)$ and $\varepsilon_{z 9}(r,e)$ defined by (\ref{definition of z 7-9}) in the first. By the continuity and uniformly boundedness of $\sigma_x, f_x$, we have
$$
\begin{aligned}
& \mathbf{E}\left[\int_s^T\left|\varepsilon_{z 7}(r)\right|^2 d r \bigg| \mathcal{F}_s^t\right] \leqslant \int_s^T\mathbf{E}\left[\left|\varepsilon_{z 7}(r)\right|^2 \bigg| \mathcal{F}_s^t\right]d r \\
\leqslant &\ C\int_s^T\bigg\{\mathbf{E}\left[\left|\varepsilon_{z 1}(r)\right|^2|\hat{X}(r)|^2 \bigg| \mathcal{F}_s^t\right]+\mathbf{E}\left[\left|\varepsilon_{z 2}(r)\right|^2|\hat{X}(r)|^2 \Big| \mathcal{F}_s^t\right] \\
&\ + \mathbf{E}\left[\left|\varepsilon_{z 2}(r)\right|^4 \Big| \mathcal{F}_s^t\right]+\mathbf{E}\left[\left\|\varepsilon_{z 3}(r,e)\right\|_{\mathcal{L}^2}^2|\hat{X}(r)|^2 \Big| \mathcal{F}_s^t\right]
 +\mathbf{E}\left[\left\|\varepsilon_{z 3}(r,e)\right\|_{\mathcal{L}^2}^4 \Big| \mathcal{F}_s^t\right]\bigg\}d r \\
\leqslant &\ C\bigg\{\Big\{\mathbf{E}\left[\left|\varepsilon_{z 1}(r)\right|^4 \Big| \mathcal{F}_s^t\right]\Big\}^{\frac{1}{2}}\cdot\Big\{\mathbf{E}\left[|\hat{X}(r)|^4 \Big| \mathcal{F}_s^t\right]\Big\}^{\frac{1}{2}}
  +\Big\{\mathbf{E}\left[\left|\varepsilon_{z 2}(r)\right|^4 \Big| \mathcal{F}_s^t\right]\Big\}^{\frac{1}{2}}\cdot\Big\{\mathbf{E}\left[|\hat{X}(r)|^4 \Big| \mathcal{F}_s^t\right]\Big\}^{\frac{1}{2}} \\
&\ + \mathbf{E}\left[\left|\varepsilon_{z 2}(r)\right|^4 \Big| \mathcal{F}_s^t\right]
 +\Big\{\mathbf{E}\left[\left\|\varepsilon_{z 3}(r,3)\right\|_{\mathcal{L}^2}^4 \Big| \mathcal{F}_s^t\right]\Big\}^{\frac{1}{2}}\cdot\Big\{\mathbf{E}\left[|\hat{X}(r)|^4 \Big| \mathcal{F}_s^t\right]\Big\}^{\frac{1}{2}}\\
&\ +\mathbf{E}\left[\left\|\varepsilon_{z 3}(r,e)\right\|_{\mathcal{L}^2}^4 \Big| \mathcal{F}_s^t\right]\bigg\}d r. \\
\end{aligned}
$$
Note that
$$
\begin{aligned}
&\mathbf{E}\left[\int_s^T\left|\varepsilon_{z 1}(r)\right|^4 d r \bigg| \mathcal{F}_s^t\right]
\leqslant \int_s^T \mathbf{E}\left\{\int_0^1\left|b_x(r, \theta)-\bar{b}_x(r)\right|^4 d \theta \cdot\left|\hat{X}_r\right|^4 \bigg| \mathcal{F}_s^t\right\} d r \\
&  \leqslant C \int_s^T \mathbf{E}\left[\big|\hat{X}_r\big|^8 \Big| \mathcal{F}_s^t\right] d r \leqslant C\left|z-\bar{X}^{t, x ; \bar{u}}_s\right|^8 = \delta \left(\left|z-\bar{X}^{t, x ; \bar{u}}_s\right|^4\right),
\end{aligned}
$$
as well as
$$
\begin{aligned}
&\mathbf{E}\left[\int_s^T\left\|\varepsilon_{z 3}(r, e)\right\|_{\mathcal{L}^2}^4 d r \bigg| \mathcal{F}_s^t\right]
 \leqslant \int_s^T \mathbf{E}\left\{\int_0^1\left\|f_x(r, \theta, e)-\bar{f}_x(r, e)\right\|_{\mathcal{L}^2}^4 d \theta \cdot\big|\hat{X}_{r-}\big|^4 \bigg| \mathcal{F}_s^t\right\} d r \\
& \leqslant C \int_s^T \mathbf{E}\left[\big|\hat{X}_{r-}\big|^8 \Big| \mathcal{F}_s^t\right] d r = C \int_s^T \mathbf{E}\left[\big|\hat{X}_r\big|^8 \Big| \mathcal{F}_s^t\right] d r
 \leqslant C\left|z-\bar{X}^{t, x ; \bar{u}}_s\right|^8
 = \delta \left(\left|z-\bar{X}^{t, x ; \bar{u}}_s\right|^4\right),
\end{aligned}
$$
and similarly for other estimates of remainder terms. Also by (\ref{estimate of sup X}), we get
\begin{equation}
\mathbf{E}\left[\int_s^T\left|\varepsilon_{z 7}(r)\right|^2 d r \bigg| \mathcal{F}_s^t\right] \leqslant \delta \left(\left|z-\bar{X}^{t, x ; \bar{u}}_s\right|^4\right).
 \label{estimate of z 7}
\end{equation}
We can get the following estimates by the similar proof.
\begin{equation}
\begin{aligned}
\mathbf{E}\left[\int_s^T\left|\varepsilon_{z 8}(r)\right|^2 d r \bigg| \mathcal{F}_s^t\right] & \leqslant \delta \left(\left|z-\bar{X}^{t, x ; \bar{u}}_s\right|^4\right), \\
\mathbf{E}\left[\int_s^T\left\|\varepsilon_{z 9}(r,e)\right\|_{\mathcal{L}^2}^2 d r \bigg| \mathcal{F}_s^t\right] & \leqslant \delta \left(\left|z-\bar{X}^{t, x ; \bar{u}}_s\right|^4\right).
\end{aligned}
 \label{estimate of z 8-9}
\end{equation}
In what follows, by (\ref{estimate of z 1-6}), (\ref{estimate of z 7}), (\ref{estimate of z 8-9}), we have
$$
\begin{aligned}
I_1 & \leqslant C \mathbf{E}\left[\sup _{s \leqslant r \leqslant T}|p_r|^2 \int_s^T\left|\varepsilon_{z 4}(r)\right|^2 d r \bigg| \mathcal{F}_s^t\right]
 \leqslant C \mathbf{E}\left[\int_s^T\left|\varepsilon_{z 4}(r)\right|^2 d r \bigg| \mathcal{F}_s^t\right] \\
& =\delta\left(\left|z-\bar{X}^{t, x ; \bar{u}}_s\right|^4\right),\ \mathbf{P} \text {-a.s.},
\end{aligned}
$$
and similarly for $I_3, I_9$; by H\"{o}lder's inequality, we get
$$
\begin{aligned}
I_2 \leqslant &\ C \mathbf{E}\left[\int_s^T|q_r|^2 d r \int_s^T\left|\varepsilon_{z 5}(r)\right|^2 d r \bigg| \mathcal{F}_s^t\right] \\
\leqslant &\ C\left\{\mathbf{E}\left[\left(\int_s^T|q_r|^2 d r\right)^2 \bigg| \mathcal{F}_s^t\right]\right\}^{\frac{1}{2}}
 \cdot\left\{\mathbf{E}\left[\int_s^T\left|\varepsilon_{z 5}(r)\right|^4 d r \bigg| \mathcal{F}_s^t\right]\right\}^{\frac{1}{2}} \\
\leqslant &\ C \left\{\mathbf{E}\left[\int_s^T\left|\varepsilon_{z 5}(r)\right|^4 d r \bigg| \mathcal{F}_s^t\right]\right\}^{\frac{1}{2}}
 = \delta\left(\left|z-\bar{X}^{t, x ; \bar{u}}_s\right|^4\right),\ \mathbf{P}\text {-a.s.},
\end{aligned}
$$
and similarly for $I_4, I_{10}$; by (\ref{estimate of z 7}), we obtain
$$
\begin{aligned}
I_5 & \leqslant C \mathbf{E}\left[\sup _{s \leqslant r \leqslant T}|P_r|^2 \int_s^T\left|\varepsilon_{z 7}(r)\right|^2 d r \bigg| \mathcal{F}_s^t\right]
 \leqslant C \mathbf{E}\left[\int_s^T\left|\varepsilon_{z 7}(r)\right|^2 d r \bigg| \mathcal{F}_s^t\right] \\
& =\delta\left(\left|z-\bar{X}^{t, x ; \bar{u}}_s\right|^4\right),\ \mathbf{P}\text {-a.s.},
\end{aligned}
$$
and similarly for $I_7, I_{11}$; by (\ref{estimate of z 8-9}), we obtain
$$
\begin{aligned}
I_6 \leqslant &\ C \mathbf{E}\left[\int_s^T|Q_r|^2 d r \int_s^T\left|\varepsilon_{z 8}(r)\right|^2 d r \bigg| \mathcal{F}_s^t\right] \\
\leqslant &\ C\left\{\mathbf{E}\left[\left(\int_s^T|Q_r|^2 d r\right)^2 \bigg| \mathcal{F}_s^t\right]\right\}^{\frac{1}{2}}
 \cdot\left\{\mathbf{E}\left[\int_s^T\left|\varepsilon_{z 8}(r)\right|^4 d r \bigg| \mathcal{F}_s^t\right]\right\}^{\frac{1}{2}} \\
\leqslant &\ C \left\{\mathbf{E}\left[\int_s^T\left|\varepsilon_{z 8}(r)\right|^4 d r \bigg| \mathcal{F}_s^t\right]\right\}^{\frac{1}{2}}
 = \delta\left(\left|z-\bar{X}^{t, x ; \bar{u}}_s\right|^4\right),\ \mathbf{P}\text {-a.s.},
\end{aligned}
$$
and similarly for $I_8, I_{12}$. Combining the above estimates, we obtain that the first inequality of (\ref{estimates of remainder terms of BSDEPs}) holds.

For the second one, since
$$
\begin{aligned}
 \Pi(r):= &\ \Psi_1(r)-\Psi_2(r) \\
:= &\ \Big[O_{n\times n},\hat{Y}_r-\hat{X}_r^\top p_r,\hat{Z}_r-\hat{X}_r^\top \left(\bar{\sigma}_x(r)^\top p_r+q_r\right), \\
& \quad \hat{\tilde{Z}}_r-\hat{X}_r^\top\int_{\mathcal{E}}\left(\bar{f}_x(r,e)^\top p_r+\tilde{q}_{(r,e)}+\bar{f}_x(r,e)^\top \tilde{q}_{(r,e)}\right)\nu(de)\Big] \\
= &\ \bigg[O_{n\times n},\frac{1}{2}\left\langle P_r\hat{X}_r,\hat{X}_r\right\rangle, \Big\langle p_r,\frac{1}{2}\hat{X}_r^\top \bar{\sigma}_{x x}(r)\hat{X}_r+\varepsilon_{z 5}(r)\Big\rangle \\
& \quad +\frac{1}{2}\operatorname{tr}\Big\{\Phi_rQ_r+P_r\Phi_r\bar{\sigma}_x(r)^\top+P_r\bar{\sigma}_x(r)\Phi_r+P_r\varepsilon_{z 8}(r)\Big\}, \\
& \quad \int_{\mathcal{E}}\bigg\{\Big\langle p_r,\frac{1}{2}\hat{X}_{r-}^\top \bar{f}_{x x}(r,e)\hat{X}_{r-}+\varepsilon_{z 6}(r,e)\Big\rangle
 +\Big\langle \tilde{q}_{(r,e)},\frac{1}{2}\hat{X}_{r-}^\top \bar{f}_{x x}(r,e)\hat{X}_{r-}+\varepsilon_{z 6}(r,e)\Big\rangle \nu(de) \\
& \qquad +\frac{1}{2}\operatorname{tr}\Big\{\Phi_{r-}\tilde{Q}_{(r,e)}+P_r\bar{f}_x(r,e)\Phi_{r-}+P_r\Phi_{r-}\bar{f}_x(r,e)^\top \\
& \qquad +P_r\bar{f}_x(r,e)\Phi_{r-}\bar{f}_x(r,e)^\top +\tilde{Q}_{(r,e)}\bar{f}_x(r,e)\Phi_{r-}+\tilde{Q}_{(r,e)}\Phi_{r-}\bar{f}_x(r,e)^\top \\
& \qquad +\tilde{Q}_{(r,e)}\bar{f}_x(r,e)\Phi_{r-}\bar{f}_x(r,e)^\top +P_r\varepsilon_{z 9}(r, e)+\tilde{Q}_{(r,e)}\varepsilon_{z 9}(r, e)\Big\}\bigg\}\nu(de)\Big], \\
\end{aligned}
$$
by the definitions of $\varepsilon_{z 5}(r), \varepsilon_{z 6}(r,e), \varepsilon_{z 8}(r), \varepsilon_{z 9}(r,e)$, the boundedness of $\bar{\sigma}_x, \bar{\sigma}_{x x}, \tilde{D}^2g$, and the square-integrability of $P_\cdot, Q_\cdot, \tilde{Q}_{(\cdot,e)}$, we have
$$
\begin{aligned}
& \mathbf{E}\left[\left(\int_s^T\left|\Pi(r) \tilde{D}^2 g(r) \Pi(r)^\top\right| d r\right)^2 \bigg| \mathcal{F}_s^t\right]
 \leqslant C \mathbf{E}\left[\left(\int_s^T\left|\Pi(r)^2\right| d r\right)^2 \bigg| \mathcal{F}_s^t\right] \\
& \leqslant C \mathbf{E}\Bigg[\bigg(\int _s^T\Big[|\hat{X}_r|^4\left(|P_r|^2+|Q_r|^2+\|\tilde{Q}_{(r,e)}\|_{\mathcal{L}^2}^2+|p_r|^2+\|q_{(r,e)}\|_{\mathcal{L}^2}^2\right)\\
& \quad +|p_r|^2\left(\left|\varepsilon_{z 5}(r)\right|^2+\|\varepsilon_{z 6}(r,e)\|_{\mathcal{L}^2}^2\right) +\|\tilde{q}_{(r,e)}\|_{\mathcal{L}^2}^2\|\varepsilon_{z 6}(r,e)\|_{\mathcal{L}^2}^2\\
& \quad +|P_r|^2\left(\left|\varepsilon_{z 8}(r)\right|^2+\|\varepsilon_{z 9}(r,e)\|_{\mathcal{L}^2}^2\right)
 +\|\tilde{Q}_{(r,e)}\|_{\mathcal{L}^2}^2\|\varepsilon_{z 9}(r,e)\|_{\mathcal{L}^2}^2\Big] d r\bigg)^2 \bigg| \mathcal{F}_s^t\Bigg] \\
& \leqslant C \mathbf{E}\left[\left(\int_s^T\left[|\hat{X}_r|^4\left(|P_r|^2+|Q_r|^2+\|\tilde{Q}_{(r,e)}\|_{\mathcal{L}^2}^2+|p_r|^2
 +\|q_{(r,e)}\|_{\mathcal{L}^2}^2\right)\right] d r\right)^2 \bigg| \mathcal{F}_s^t\right]\\
& \quad +\delta\left(\left|z-\bar{X}^{t, x ; \bar{u}}_s\right|^4\right) \\
& \leqslant C\left(\mathbf{E}\left[\sup _{s \leqslant r \leqslant T}|\hat{X}_r|^{16} \bigg| \mathcal{F}_s^t\right]\right)^{\frac{1}{2}}+\delta\left(\left|z-\bar{X}^{t, x ; \bar{u}}_s\right|^4\right)
 \leqslant\delta\left(\left|z-\bar{X}^{t, x ; \bar{u}}_s\right|^4\right),\ \mathbf{P}\text {-a.s.}.
\end{aligned}
$$
Thus the second inequality of (\ref{estimates of remainder terms of BSDEPs}) holds. And then, we prove the last one.
$$
\begin{aligned}
\mathbf{E} & {\left[\left(\int_s^T\left|\Psi_2(r)\left[\tilde{D}^2 g(r)-\frac{1}{2} D^2 \bar{g}(r)\right] \Psi_2(r)^\top \right| d r\right)^2 \bigg| \mathcal{F}_s^t\right] } \\
\leqslant &\ C \mathbf{E}\left[\left(\int_s^T|\hat{X}_r|^2\left(1+|p_r|^2+\left|\bar{\sigma}_x^\top (r) p_r+q_r\right|^2+\left\|\left(\bar{f}_x(r,e)^\top p_r+\tilde{q}_{(r,e)}\right.\right.\right.\right.\right.\\
&\qquad \left.\left.\left.\left.\left.+\bar{f}_x(r,e)^\top \tilde{q}_{(r,e)}\right)\right\|_{\mathcal{L}^2}^2\right)\left|\tilde{D}^2 g(r)-\frac{1}{2} D^2 \bar{g}(r) \right| d r\right)^2 \bigg| \mathcal{F}_s^t\right] \\
\leqslant &\ C \mathbf{E}\left[\sup_{s \leqslant r \leqslant T}|\hat{X}_r|^4\left(\int_s^T\left(1+|p_r|^2+\left|q_r\right|^2+\left\|\tilde{q}_{(r,e)}\right\|_{\mathcal{L}^2}^2\right)\left|\tilde{D}^2 g(r)-\frac{1}{2} D^2 \bar{g}(r) \right| d r\right)^2 \bigg| \mathcal{F}_s^t\right] \\
\leqslant &\ C\left\{\mathbf{E}\left[\sup _{s \leqslant r \leqslant T}|\hat{X}_r|^8 \bigg| \mathcal{F}_s^t\right]\right\}^{\frac{1}{2}}\cdot
 \left\{\mathbf{E}\left[\left(\int_s^T\left(1+|p_r|^2+\left|q_r\right|^2+\left\|\tilde{q}_{(r,e)}\right\|_{\mathcal{L}^2}^2\right)\right.\right.\right.\\
&\qquad \left.\left.\left.\cdot\left|\tilde{D}^2 g(r)-\frac{1}{2} D^2 \bar{g}(r)\right| d r\right)^4 \bigg| \mathcal{F}_s^t\right]\right\}^{\frac{1}{2}} \\
\leqslant &\ C\left\{\mathbf{E}\left[\sup _{s \leqslant r \leqslant T}|\hat{X}_r|^8 \bigg| \mathcal{F}_s^t\right]\right\}^{\frac{1}{2}} \cdot \left\{\mathbf{E}\left[\left(\int_s^T\left(1+|p_r|^2+\left|q_r\right|^2+\left\|\tilde{q}_{(r,e)}\right\|_{\mathcal{L}^2}^2\right)^2 d r\right)^4 \bigg| \mathcal{F}_s^t\right]\right\}^{\frac{1}{4}} \\
&\quad \cdot\left\{\mathbf{E}\left[\left(\int_s^T\left|\tilde{D}^2 g(r)-\frac{1}{2} D^2 \bar{g}(r)\right|^2 d r\right)^4 \bigg| \mathcal{F}_s^t\right]\right\}^{\frac{1}{4}},\ \mathbf{P}\text {-a.s.}.
\end{aligned}
$$
Since $p_\cdot, q_\cdot, \tilde{q}_{(\cdot,e)}$ are square-integrable, by the definition of $\tilde{D}^2 g, \hat{Y}_\cdot, \hat{Z}_\cdot$ and $\hat{\tilde{Z}}_{(\cdot,e)}$ and the modulus continuity of $D^2 g$, we obtain the last inequality of (\ref{estimates of remainder terms of BSDEPs}).

By (\ref{DeltaY-barY}), (\ref{estimates of remainder terms of BSDEPs}), and Lemma \ref{Lemma 4.3}, we have
\begin{equation}
\begin{aligned}
& \left|\Delta{Y}_s -\bar{Y}^{t, x ; \bar{u}}_s \right|^2 \leqslant C \mathbf{E}\left[\left(\int_s^T\left|C_1(r)\right| d r\right)^2 \bigg| \mathcal{F}_s^t\right] \\
&\quad +C \mathbf{E}\left[\left(\int_s^T\left|\Psi_1(r) \tilde{D}^2 g(r) \Psi_1(r)^\top -\frac{1}{2} \Psi_2(r) D^2 \bar{g}(r) \Psi_2(r)^\top \right| d r\right)^2 \bigg| \mathcal{F}_s^t\right] \\
&\quad +\delta\left(\left|z-\bar{X}^{t, x ; \bar{u}}_s\right|^4\right) \leqslant \delta\left(\left|z-\bar{X}^{t, x ; \bar{u}}_s\right|^4\right),\ \mathbf{P}\text {-a.s.}.
\end{aligned}
 \label{estimate of DeltaY-barY}
\end{equation}

Step 6. Completion of the proof.

Since the set of all rational vectors $z \in \mathbf{R}^n$ is countable, we can find a subset $\Omega_0 \subseteq \Omega$ with $\mathbf{P}\left(\Omega_0\right)=1$ such that for any $\omega_0 \in \Omega_0$,
$$
\left\{\begin{array}{l}
V\left(s, \bar{X}^{t, x ; \bar{u}}_{(s, \omega_0)}\right)=-\bar{Y}^{t, x ; \bar{u}}_{(s, \omega_0)},\ (\ref{estimate of sup X}),(\ref{estimate of z 1-6}), (\ref{definition of hatY}), (\ref{DeltaY-barY}), (\ref{estimates of remainder terms of BSDEPs}), (\ref{estimate of DeltaY-barY}) \text{ hold for}\\
 \text { any rational vector } z,\ \left(\Omega, \mathcal{F}, \mathbf{P}(\cdot | \mathcal{F}_s^t)(\omega_0), W_\cdot-W_s, \tilde{N}(\cdot,\cdot) ; u_\cdot|_{[s, T]}\right) \in \mathcal{U}^w[s, T],\\
 \text { and } \sup\limits_{s \leqslant r \leqslant T}\left[|p_{(r, \omega_0)}|+|P_{(r, \omega_0)}|\right]<\infty.
\end{array}\right.
$$
The first relation of the above is obtained by the DPP of \cite{LiPeng2009}. Let $\omega_0 \in \Omega_0$ be fixed, and then for any rational vector $z \in \mathbf{R}^n$, by (\ref{estimate of DeltaY-barY}), we have
$$
 \Delta{Y}_{(s, \omega_0)}-\bar{Y}^{t, x ; \bar{u}}_{(s, \omega_0)} \leqslant \delta \left(\left|z-\bar{X}^{t, x ; \bar{u}}_{(s, \omega_0)}\right|^2\right), \quad \text{for all} \ s \in[t, T].
$$
By the definition of $\Delta{Y}_\cdot$ (see (\ref{definition of DeltaYZZ})), we have
$$
\begin{aligned}
Y^{s, z ; \bar{u}}_{(s, \omega_0)}-\bar{Y}^{t, x ; \bar{u}}_{(s, \omega_0)}& =  \left\langle p_{(s, \omega_0)}, (z-\bar{X}^{t, x ; \bar{u}}_{(s, \omega_0)})\right\rangle \\
&\quad +\frac{1}{2}\left\langle P_{(s, \omega_0)} (z-\bar{X}^{t, x ; \bar{u}}_{(s, \omega_0)}), (z-\bar{X}^{t, x ; \bar{u}}_{(s, \omega_0)})\right\rangle+\delta\left(\left|z-\bar{X}^{t, x ; \bar{u}}_{(s, \omega_0)}\right|^2\right),
\end{aligned}
$$
for all $s \in[t, T]$. Thus for all $s \in[t, T]$,
\begin{equation}
\begin{aligned}
& V\left(s, z\right)-V(s, \bar{X}^{t, x ; \bar{u}}_{(s, \omega_0)})\leqslant-Y^{s, z ; \bar{u}}_{(s, \omega_0)}+\bar{Y}^{t, x ; \bar{u}}_{(s, \omega_0)}
 = -\left\langle p_{(s, \omega_0)}, (z-\bar{X}^{t, x ; \bar{u}}_{(s, \omega_0)})\right\rangle\\
&\quad -\frac{1}{2}\left\langle P_{(s, \omega_0)} (z-\bar{X}^{t, x ; \bar{u}}_{(s, \omega_0)}), (z-\bar{X}^{t, x ; \bar{u}}_{(s, \omega_0)})\right\rangle
 +\delta \left(\left|z-\bar{X}^{t, x ; \bar{u}}_{(s, \omega_0)}\right|^2\right).
\end{aligned}
 \label{estimate of V-V}
\end{equation}
Note that the term $\delta(|z-\bar{X}^{t, x ; \bar{u}}_{(s, \omega_0)}|^2)$ in the above depends only on the size of $\left|z-\bar{X}^{t, x ; \bar{u}}_{(s, \omega_0)}\right|$, and it is independent of $z$. Therefore, by the continuity of $V(s, \cdot)$, we see that (\ref{estimate of V-V}) holds for all $z \in \mathbf{R}^n$ (for more similar details see \cite{Zhou1990}), which by definition (\ref{definition of partial super-subjets to x}) proves
$$
\{-p_s\} \times[-P_s, \infty) \in \mathcal{D}_x^{2,+} V\left(s, \bar{X}^{t, x ; \bar{u}}_s\right),\quad \forall s \in[t, T],\ \mathbf{P} \text {-a.s.}.
$$

Finally, fix an $\omega \in \Omega$ such that (\ref{estimate of V-V}) holds for any $z \in \mathbf{R}^n$. For any $(\hat{p}, \hat{P}) \in$ $\mathcal{D}_x^{2,-} V\left(s, \bar{X}^{t, x ; \bar{u}}_s\right)$, by definition (\ref{definition of partial super-subjets to x}) we have
$$
\begin{aligned}
0 \leqslant & \liminf _{z \rightarrow \bar{X}^{t, x ; \bar{u}}_s}  \left\{\frac{V\left(s, z\right)-V\left(s, \bar{X}^{t, x ; \bar{u}}_s\right)
 -\left\langle\hat{p}, z-\bar{X}^{t, x ; \bar{u}}_s\right\rangle-\frac{1}{2}\left\langle\hat{P}(z-\bar{X}^{t, x ; \bar{u}}_s), z-\bar{X}^{t, x ; \bar{u}}_s\right\rangle}{\left|z-\bar{X}^{t, x ; \bar{u}}_s\right|}\right\} \\
\leqslant & \liminf _{z \rightarrow \bar{X}^{t, x ; \bar{u}}_s}  \left\{-\frac{\left\langle p_s+\hat{p}, z-\bar{X}^{t, x ; \bar{u}}_s\right\rangle
 +\frac{1}{2}\left\langle\left(P_s+\hat{P}\right)\left(z-\bar{X}^{t, x ; \bar{u}}_s\right), z-\bar{X}^{t, x ; \bar{u}}_s\right\rangle}{\left|z-\bar{X}^{t, x ; \bar{u}}_s\right|}\right\}.
\end{aligned}
$$
Then, it is necessary that
$$
\hat{p}=-p_s, \ \hat{P} \leqslant-P_s, \quad \forall s \in[t, T], \ \mathbf{P} \text {-a.s.}.
$$
Thus, (\ref{relation of p,P and V}) holds. (\ref{relation of p and DV}) is immediate. The proof is complete.
\end{proof}

The following result characterizes the super- and subjets of the value function in the time variable $t$ along an optimal trajectory, with the help of an additional $\mathcal{H}$-function.

\begin{mythm}\label{nonsmooth time theorem}
Suppose (H1)-(H3) hold and let $(t,x)\in [0,T)\times \mathbf{R}^n$ be fixed. Let $(\bar{X}_\cdot^{t,x;\bar{u}},\bar{u}_\cdot)$ be an optimal pair of {\bf Problem (SROCPJ)}. Let $(p_\cdot,q_\cdot,\tilde{q}_{(\cdot,\cdot)})\in L_{\mathcal{F}}^2([t,T];\mathbf{R}^n)\times L_{\mathcal{F},p}^2([t,T];\mathbf{R}^n)\times F_p^2([t,T]\times\mathcal{E};\mathbf{R}^n)$ and $(P_\cdot,Q_\cdot,\tilde{Q}_{(\cdot,\cdot)})\in L_{\mathcal{F}}^2([t,T];\mathcal{S}^n)\times L_{\mathcal{F},p}^2([t,T];\mathcal{S}^n)\times F_p^2([t,T]\times\mathcal{E};\mathcal{S}^n)$ be the first-order and second-order adjoint processes satisfying (\ref{first-order adjoint equation}) and (\ref{second-order adjoint equation}), respectively. Then
\begin{equation}
\begin{aligned}
\left[\mathcal{H}(s,\bar{X}_s^{t,x;\bar{u}},\bar{u}_s),\infty)\right. \subseteq \mathcal{D}_{t+}^{1,+}V(s,\bar{X}_s^{t,x;\bar{u}}),\quad &\text{a.e.}\ s\in[t,T],\ \mathbf{P}\text{-a.s.}, \\
\mathcal{D}_{t+}^{1,-}V(s,\bar{X}_s^{t,x;\bar{u}}) \subseteq \left.(-\infty,\mathcal{H}(s,\bar{X}_s^{t,x;\bar{u}},\bar{u}_s)\right],\quad &\text{a.e.}\ s\in[t,T],\ \mathbf{P}\text{-a.s.},
\end{aligned}
 \label{relation of H and DV}
\end{equation}
where $\mathcal{H}:[0,T] \times \mathbf{R}^n \times \mathbf{U} \rightarrow \mathbf{R}$ is defined as
\begin{equation}
\begin{aligned}
\mathcal{H}(s,x,u):=&\ H\left(s,x,-V(s,x),\bar{\sigma}(s)^\top p_s,-\int_{\mathcal{E}}\left[V(s,x+\bar{f}(s,e))-V(s,x)\right]\nu(de);p_s,q_s,P_s,u\right)\\
&\ +\int_{\mathcal{E}}\left\langle\tilde{q}_{(s,e)},\bar{f}(s,e)\right\rangle\nu(de)-\frac{1}{2}\left\langle P_s\bar{\sigma}(s),\bar{\sigma}(s) \right\rangle \\
&\ -\frac{1}{2}\int_{\mathcal{E}} \left\langle P_s\bar{f}(s,e),\bar{f}(s,e) \right\rangle \nu(de)-\frac{1}{2}\int_{\mathcal{E}} \left\langle \tilde{Q}_{(s,e)}\bar{f}(s,e),\bar{f}(s,e) \right\rangle \nu(de)\\
\equiv&\ \left\langle p_s,b(s,x,u)\right\rangle +\left\langle q_s-P_s\bar{\sigma}(s),\sigma(s,x,u)\right\rangle +\frac{1}{2}\left\langle P_s(\sigma(s,x,u),\sigma(s,x,u)\right\rangle \\
&\ +g\left(s,x,-V(s,x),\sigma(s,x,u)^\top p_s,-\int_{\mathcal{E}}\left[V(s,x+\bar{f}(s,e))-V(s,x)\right]\nu(de),u\right) \\
&\ +\int_{\mathcal{E}}\left\langle\tilde{q}_{(s,e)}-\frac{1}{2}(P_s+\tilde{Q}_{(s,e)})\bar{f}(s,e),\bar{f}(s,e)\right\rangle \nu(de)-\frac{1}{2}\left\langle P_s\bar{\sigma}(s),\bar{\sigma}(s) \right\rangle \\
\equiv&\ G\left(s,x,-V(t,x),p_s,P_s,u\right)+\left\langle q_s-P_s\bar{\sigma}(s),\sigma(s,x,u)\right\rangle-\frac{1}{2}\left\langle P_s\bar{\sigma}(s),\bar{\sigma}(s)\right\rangle \\
&\ +\int_{\mathcal{E}}\left[V(s,x+f(s,x,u,e))-V(s,x)+\left\langle p_s,f(s,x,u,e)\right\rangle\right]\nu(de) \\
&\ +\int_{\mathcal{E}}\left\langle\tilde{q}_{(s,e)}-\frac{1}{2}(P_s+\tilde{Q}_{(s,e)})\bar{f}(s,e),\bar{f}(s,e)\right\rangle \nu(de).
\end{aligned}
 \label{definition of HH}
\end{equation}
\end{mythm}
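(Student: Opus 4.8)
The plan is to reduce everything, as in Step 6 of Theorem \ref{nonsmooth state theorem}, to a single one-sided estimate at the right Lebesgue points $s$ of the relevant processes, namely
\[
V(\tau,\bar X_s^{t,x;\bar u})-V(s,\bar X_s^{t,x;\bar u})\leqslant \mathcal H\big(s,\bar X_s^{t,x;\bar u},\bar u_s\big)(\tau-s)+o(\tau-s),\qquad \tau\downarrow s .
\]
Both inclusions in (\ref{relation of H and DV}) follow from this one inequality: since $\mathcal D_{t+}^{1,+}V$ is upward closed the estimate yields $[\mathcal H,\infty)\subseteq\mathcal D_{t+}^{1,+}V$, while for any $q\in\mathcal D_{t+}^{1,-}V$ its defining lower bound, combined with the estimate, forces $q\leqslant\mathcal H$. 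Thus no separate lower estimate is needed, mirroring how (\ref{estimate of V-V}) alone produced both the super- and subjet statements in the state variable.

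To obtain the estimate I would freeze the spatial variable at $\bar X_s$ and split, after conditioning on $\mathcal F_s^t$,
\[
V(\tau,\bar X_s)-V(s,\bar X_s)=\mathbf E\big[V(\tau,\bar X_s)-V(\tau,\bar X_\tau)\,\big|\,\mathcal F_s^t\big]+\Big(\mathbf E\big[V(\tau,\bar X_\tau)\,\big|\,\mathcal F_s^t\big]-V(s,\bar X_s)\Big).
\]
The second (dynamic) bracket is handled by the DPP: since $V(r,\bar X_r)=-\bar Y_r$ for every $r$ by \cite{LiPeng2009} (as already used in Theorem \ref{nonsmooth state theorem}), integrating the BSDEP (\ref{BSDEP equation}) on $[s,\tau]$ and conditioning kills the martingale integrals, giving exactly $\mathbf E[\int_s^\tau\bar g(r)\,dr\mid\mathcal F_s^t]$, whose limit at a Lebesgue point is $\bar g(s)(\tau-s)+o(\tau-s)$. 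Here $\bar Y_s=-V(s,\bar X_s)$, and matching the jump parts of the pathwise identity $V(r,\bar X_r)=-\bar Y_r$ gives $\bar{\tilde Z}_{(s,e)}=-[V(s,\bar X_{s-}+\bar f(s,e))-V(s,\bar X_{s-})]$ along the optimal trajectory; these identifications feed the $g$-slots of $\mathcal H$.

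For the first (spatial) bracket I would use the quantitative form of Theorem \ref{nonsmooth state theorem}, i.e. (\ref{estimate of V-V}) written at $(\tau,\bar X_\tau)$ and evaluated at $z=\bar X_s$, which gives, with $\xi_r:=\bar X_r-\bar X_s$,
\[
V(\tau,\bar X_s)-V(\tau,\bar X_\tau)\leqslant \langle p_\tau,\xi_\tau\rangle-\tfrac12\langle P_\tau\xi_\tau,\xi_\tau\rangle+\delta(|\xi_\tau|^2),
\]
with $\delta$ deterministic and $\delta(\alpha)/\alpha\to0$. Applying It\^o's formula to $\langle p_r,\xi_r\rangle$ and to $\operatorname{tr}\{P_r\xi_r\xi_r^\top\}$ via the adjoint dynamics (\ref{first-order adjoint equation}), (\ref{second-order adjoint equation}) and $d\xi_r=\bar b(r)\,dr+\bar\sigma(r)\,dW_r+\int_{\mathcal E}\bar f(r,e)\,\tilde N(de,dr)$, then conditioning, the first-order term produces $\int_s^\tau(\langle p_r,\bar b\rangle+\langle q_r,\bar\sigma\rangle+\int_{\mathcal E}\langle\tilde q_{(r,e)},\bar f\rangle\nu(de))\,dr$ (all $\xi_r$-carrying terms being $O((r-s)^{1/2})$, hence $o(\tau-s)$ after integration). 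The decisive point is the second-order term: besides the drift contribution $\operatorname{tr}\{P_r(\bar\sigma\bar\sigma^\top+\int_{\mathcal E}\bar f\bar f^\top\nu(de))\}$, the jump integrand $\tilde Q_{(r,e)}$ of $dP_r$ has a non-vanishing predictable covariation with the $\bar f\bar f^\top$-jumps of $\xi_r\xi_r^\top$, yielding the extra term $\int_{\mathcal E}\langle\tilde Q_{(r,e)}\bar f(r,e),\bar f(r,e)\rangle\nu(de)$. This is exactly the contribution $\int_{\mathcal E}\int_s^\tau\langle\tilde Q_{(r,e)}\bar f,\bar f\rangle\nu(de)\,dr$ absent from (63) of \cite{ShiWu2011}; keeping it is what makes $\mathcal H$ in (\ref{definition of HH}), rather than a modified function, the correct limit. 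Dividing by $(\tau-s)$ and letting $\tau\downarrow s$ at a Lebesgue point (Lemma \ref{Lebesgue point}), then assembling the coefficients against the explicit forms of $\mathcal H$, gives the asserted estimate.

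The hardest part will be the uniform remainder control that turns the above into a genuine $o(\tau-s)$ after conditioning, together with the bookkeeping of every second-order contribution so that they collapse onto $\mathcal H$. Concretely one must show $\mathbf E[\delta(|\xi_\tau|^2)\mid\mathcal F_s^t]=o(\tau-s)$, using that $\delta$ is deterministic with $\delta(\alpha)/\alpha\to0$ and the conditional moment bounds $\mathbf E[|\xi_\tau|^k\mid\mathcal F_s^t]\leqslant C(\tau-s)^{k/2}$ from Corollary \ref{Corollary 4.1} (splitting on the size of $|\xi_\tau|$); control the mixed $\xi_r$-carrying drift and covariation terms in the same spirit; and handle the gap between $\bar g(r)$ and $\bar g(s)$, which—because $g$ is nonlinear in $(Y,Z,\tilde Z)$—must be treated through the BSDEP stability estimate of Lemma \ref{Lemma 4.3} as in Steps 4--5 of Theorem \ref{nonsmooth state theorem}, and is responsible for reproducing the diffusion-type second-order term inside $\mathcal H$. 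Finally, replacing the time-$\tau$ adjoint values $p_\tau,q_\tau,P_\tau,\tilde Q_{(\tau,\cdot)}$ by their time-$s$ values is licit only off an $s$-null set, which is why the conclusion holds for a.e. $s\in[t,T]$.
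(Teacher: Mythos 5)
Your proposal is correct in substance and arrives at the paper's key one-sided estimate (\ref{estimate of V-V with tau}), but through a genuinely different middle route. The paper does not reuse Theorem \ref{nonsmooth state theorem}: it restarts the whole forward--backward machinery at time $\tau$ from the position $\bar X_s^{t,x;\bar u}$ (equations (\ref{SDEP with tau})--(\ref{BSDEP with tau})), re-derives the variational equations with random initial datum $\hat X_\tau^\tau=-\int_s^\tau\bar b(r)\,dr-\int_s^\tau\bar\sigma(r)\,dW_r-\int_s^\tau\int_{\mathcal E}\bar f(r,e)\,\tilde N(de,dr)$, and repeats Steps 4--5 of Theorem \ref{nonsmooth state theorem} to obtain (\ref{estimate of DeltaY-barY with tau}) before invoking the DPP bound (\ref{estimate of V with tau}). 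You instead quote the already-proved spatial estimate (\ref{estimate of V-V}) at time $\tau$ with $z=\bar X_s^{t,x;\bar u}$; since $\hat X_\tau^\tau=\bar X_s^{t,x;\bar u}-\bar X_\tau^{t,x;\bar u}=-\xi_\tau$, your bound $V(\tau,\bar X_s)-V(\tau,\bar X_\tau)\leqslant\langle p_\tau,\xi_\tau\rangle-\frac12\langle P_\tau\xi_\tau,\xi_\tau\rangle+\delta(|\xi_\tau|^2)$ is exactly the paper's $-\hat Y_\tau^\tau$ contribution plus remainder, and there is no circularity because Theorem \ref{nonsmooth state theorem} holds for all $s$ and all $z$ on a single full-measure set. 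From there the two arguments coincide: your It\^o computations for $\langle p_r,\xi_r\rangle$ and $\operatorname{tr}\{P_r\xi_r\xi_r^\top\}$ reproduce (\ref{estimate of pX with tau}) and (\ref{estimate of PXX with tau}), including the jump-covariation term $\int_{\mathcal E}\langle\tilde Q_{(r,e)}\bar f,\bar f\rangle\nu(de)$ absent from (63) of \cite{ShiWu2011}, which you correctly single out, and both inclusions follow from the single estimate exactly as in the paper's endgame. Your route buys economy (no restarted FBSDEP expansion) at the cost of the step you flag, $\mathbf E[\delta(|\xi_\tau|^2)\,|\,\mathcal F_s^t]=o(\tau-s)$; this does work, because the $\delta$ built in Theorem \ref{nonsmooth state theorem} has polynomial form $\delta(r)=Cr(r\vee\sqrt{\varpi(r)})$, so your split-on-size argument with Corollary \ref{Corollary 4.1} closes it, whereas the paper's re-expansion produces via (\ref{estimate of sup X with tau}) a remainder deterministic in $|\tau-s|$ from the outset. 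Two minor points: in the dynamic bracket you justify the $\bar Y$- and $\bar{\tilde Z}$-slot identifications but silently use $\bar Z_s=\bar\sigma(s)^\top p_s$ as well -- the paper makes the same unproved substitution in (\ref{V-V with tau}), so you are no worse off, but a full write-up should isolate this as a lemma; and the passage $\bar g(r)\to\bar g(s)$ is handled by Lemma \ref{Lebesgue point} (as you first state), not by the stability estimate of Lemma \ref{Lemma 4.3}, so your closing attribution of a ``diffusion-type second-order term'' to that estimate is a misstatement, though nothing load-bearing depends on it.
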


\begin{proof}
For any $s\in(t,T)$, choose $\tau \in (s,T]$. Denote by $X_\cdot^{\tau,\bar{X}_s^{t,x;\bar{u}};\bar{u}}$ the solution to the following SDEP on $[\tau,T]$:
\begin{equation}
\begin{aligned}
X_r^{\tau,\bar{X}_s^{t,x;\bar{u}};\bar{u}}=&\ \bar{X}_s^{t,x;\bar{u}}+ \int_\tau^r b\left(\alpha,X_\alpha^{\tau,\bar{X}_s^{t,x;\bar{u}};\bar{u}},\bar{u}_\alpha\right)d \alpha+\int_\tau^r \sigma \left(\alpha,X_\alpha^{\tau,\bar{X}_s^{t,x;\bar{u}};\bar{u}},\bar{u}_\alpha\right)d W_\alpha \\
& +\int_\tau^r \int_{\mathcal{E}} f\left(\alpha,X_\alpha^{\tau,\bar{X}_s^{t,x;\bar{u}};\bar{u}},\bar{u}_\alpha,e\right)\tilde{N}(d e,d \alpha).
\end{aligned}
 \label{SDEP with tau}
\end{equation}
Set $\hat{X}_r^\tau:= X_r^{\tau,\bar{X}_s^{t,x;\bar{u}};\bar{u}}-\bar{X}_r^{t,x;\bar{u}}$, $\tau \leqslant r \leqslant T$. We have the following estimate for any $k=2^i,i=1,2\cdots$,
$$
\mathbf{E}\left[\sup_{\tau \leqslant r \leqslant T}|\hat{X}_r^\tau|^{k} \bigg| \mathcal{F}_\tau^t\right] \leqslant C|\bar{X}_\tau^{t,x;\bar{u}}-\bar{X}_s^{t,x;\bar{u}}|^{k},\quad  \mathbf{P} \text{-a.s.}.
$$
Taking $\mathbf{E}[\cdot | \mathcal{F}_s^t]$ on both sides, by a standard argument we get
\begin{equation}
\mathbf{E}\left[\sup_{\tau \leqslant r \leqslant T}|\hat{X}_r^\tau|^{k} \bigg| \mathcal{F}_s^t\right] \leqslant C|\tau-s|^{\frac{k}{2}}, \quad  \mathbf{P} \text{-a.s.}.
 \label{estimate of sup X with tau}
\end{equation}
The process $\hat{X}_\cdot^\tau$ satisfies the following variational equations:
\begin{equation}
\left\{\begin{aligned}
d \hat{X}_r^\tau& = \left[\bar{b}_x(r) \hat{X}_r^\tau+\varepsilon_{\tau 1}(r)\right] d r+\left[\bar{\sigma}_x(r) \hat{X}_r^\tau+\varepsilon_{\tau 2}(r)\right] d W_r \\
&\quad +\int_{\mathcal{E}} \left[\bar{f}_x(r, e) \hat{X}_{r-}^\tau+\varepsilon_{\tau 3}(r, e)\right] \tilde{N}(d e, d r) , \quad r \in[\tau, T], \\
\hat{X}_\tau^\tau& =-\int_s^\tau \bar{b}(r)d r -\int_s^\tau \bar{\sigma}(r)d W_r -\int_s^\tau\int_{\mathcal{E}}\bar{f}(r,e)\tilde{N}(d e,d r),
\end{aligned}\right.
 \label{first-order variational equation with tau}
\end{equation}
where
\begin{equation}
\left\{\begin{aligned}
\varepsilon_{\tau 1}(r):=& \int_0^1\left[b_x\left(r, \bar{X}^{t, x ; \bar{u}}_r+\theta \hat{X}_r^\tau, \bar{u}_r\right)-\bar{b}_x(r)\right] \hat{X}_r^\tau d \theta, \\
\varepsilon_{\tau 2}(r):=& \int_0^1\left[\sigma_x\left(r, \bar{X}^{t, x ; \bar{u}}_r+\theta \hat{X}_r^\tau, \bar{u}_r\right)-\bar{\sigma}_x(r)\right] \hat{X}_r^\tau d \theta, \\
\varepsilon_{\tau 3}(r, e):=& \int_0^1\left[f_x\left(r, \bar{X}^{t, x ; \bar{u}}_{r-}+\theta \hat{X}_{r-}^\tau, \bar{u}_r, e\right)-\bar{f}_x(r,e)\right] \hat{X}_{r-}^\tau d \theta,
\end{aligned}\right.
 \label{definition of tau 1-3}
\end{equation}
and
\begin{equation}
\left\{\begin{aligned}
d \hat{X}_r^\tau= & \left[\bar{b}_x(r) \hat{X}_r^\tau+\frac{1}{2} \hat{X}_r^{\tau\top} \bar{b}_{x x}(r) \hat{X}_r^\tau+\varepsilon_{\tau 4}(r)\right] d r \\
& +\left[\bar{\sigma}_x(r) \hat{X}_r^\tau+\frac{1}{2} \hat{X}_r^{\tau\top} \bar{\sigma}_{x x}(r) \hat{X}_r^\tau+\varepsilon_{\tau 5}(r)\right] d W_r \\
& +\int_{\mathcal{E}}\left[\bar{f}_x(r, e) \hat{X}_{r-}^\tau+\frac{1}{2} \hat{X}_{r-}^{\tau\top} \bar{f}_{x x}(r, e) \hat{X}_{r-}^\tau+\varepsilon_{\tau 6}(r, e)\right] \tilde{N}(d e,d r), \quad r \in[\tau, T], \\
\hat{X}_\tau^\tau= & -\int_s^\tau \bar{b}(r)d r -\int_s^\tau \bar{\sigma}(r)d W_r -\int_s^\tau\int_{\mathcal{E}}\bar{f}(r,e)\tilde{N}(d e,d r),
\end{aligned}\right.
 \label{second-order variational equation with tau}
\end{equation}
where
\begin{equation}
\left\{\begin{aligned}
\varepsilon_{\tau 4}(r):= & \int_0^1(1-\theta)\hat{X}_r^{\tau\top}\left[b _ { x x } \left(r, \bar{X}^{t, x ; \bar{u}}_r+\theta \hat{X}_r^\tau, \bar{u}_r\right)-\bar{b}_{x x}(r)\right] \hat{X}_r^\tau d \theta, \\
\varepsilon_{\tau 5}(r):= & \int_0^1(1-\theta) \hat{X}_r^{\tau\top}\left[\sigma _ { x x } \left(r, \bar{X}^{t, x ; \bar{u}}_r+\theta \hat{X}_r^\tau, \bar{u}_r\right)-\bar{\sigma}_{x x}(r)\right] \hat{X}_r^\tau d \theta, \\
\varepsilon_{\tau 6}(r, e):= & \int_0^1(1-\theta) \hat{X}_{r-}^{\tau\top}\left[f _ { x x } \left(r, \bar{X}^{t, x ; \bar{u}}_{r-}+\theta \hat{X}_{r-}^\tau, \bar{u}_r, e\right)-\bar{f}_{x x}(r, e)\right] \hat{X}_{r-}^\tau d \theta .
\end{aligned}\right.
 \label{definition of tau 4-6}
\end{equation}
Similar to the proof of (\ref{estimate of z 1-6}), using (\ref{estimate of sup X with tau}) we obtain, $\mathbf{P}$-a.s.,
\begin{equation}
\left\{\begin{aligned}
& \mathbf{E}\left[\int_s^T\left|\varepsilon_{\tau 1}(r)\right|^k d r \bigg| \mathcal{F}_s^t\right] \leqslant \delta\left(|\tau-s|^{\frac{k}{2}}\right), \quad
 \mathbf{E}\left[\int_s^T\left|\varepsilon_{\tau 2}(r)\right|^k d r \bigg| \mathcal{F}_s^t\right] \leqslant \delta\left(|\tau-s|^{\frac{k}{2}}\right),  \\
& \mathbf{E}\left[\int_s^T\left\|\varepsilon_{\tau 3}(r,e)\right\|_{\mathcal{L}^2}^k d r \bigg| \mathcal{F}_s^t\right] \leqslant \delta\left(|\tau-s|^{\frac{k}{2}}\right), \\
& \mathbf{E}\left[\int_s^T\left|\varepsilon_{\tau 4}(r)\right|^k d r \bigg| \mathcal{F}_s^t\right] \leqslant \delta\left(|\tau-s|^k\right), \quad
 \mathbf{E}\left[\int_s^T\left|\varepsilon_{\tau 5}(r)\right|^k d r \bigg| \mathcal{F}_s^t\right] \leqslant \delta\left(|\tau-s|^k\right),  \\
& \mathbf{E}\left[\int_s^T\left\|\varepsilon_{\tau 6}(r,e)\right\|_{\mathcal{L}^2}^k d r \bigg| \mathcal{F}_s^t\right] \leqslant \delta\left(|\tau-s|^k\right).
\end{aligned}\right.
 \label{estimate of tau 1-6}
\end{equation}
Denote by $(Y_\cdot^{\tau,\bar{X}_s^{t,x;\bar{u}};\bar{u}},Z_\cdot^{\tau,\bar{X}_s^{t,x;\bar{u}};\bar{u}},\tilde{Z}_{(\cdot,\cdot)}^{\tau,\bar{X}_s^{t,x;\bar{u}};\bar{u}})$ the solution to the following BSDEP on $\left(\Omega,\mathcal{F}\right.$, $\left.\{\mathcal{F}_\tau^t\}_{\tau\geq t},\mathbf{P}(\cdot | \mathcal{F}_\tau^t)\right)$ for $r \in [\tau,T]$:
\begin{equation}
\begin{aligned}
Y_r^{\tau,\bar{X}_s^{t,x;\bar{u}};\bar{u}}=&\ \phi\left(X_T^{\tau,\bar{X}_s^{t,x;\bar{u}};\bar{u}}\right)
 +\int_\tau^T g\left(\alpha,X_\alpha^{\tau,\bar{X}_s^{t,x;\bar{u}};\bar{u}},Y_\alpha^{\tau,\bar{X}_s^{t,x;\bar{u}};\bar{u}},
 Z_\alpha^{\tau,\bar{X}_s^{t,x;\bar{u}};\bar{u}},\tilde{Z}_{(\alpha,e)}^{\tau,\bar{X}_s^{t,x;\bar{u}};\bar{u}},\bar{u}\right)d \alpha \\
& -\int_\tau^T Z_\alpha^{\tau,\bar{X}_s^{t,x;\bar{u}};\bar{u}}d W_\alpha -\int_\tau^T \int_{\mathcal{E}}\tilde{Z}_{(\alpha,e)}^{\tau,\bar{X}_s^{t,x;\bar{u}};\bar{u}} \tilde{N}(d e,d \alpha).
\end{aligned}
 \label{BSDEP with tau}
\end{equation}
For any $\tau \leqslant r \leqslant T$, set $\hat{Y}_r^\tau:=\left\langle p_r, \hat{X}_r^\tau \right\rangle +\frac{1}{2} \left\langle P_r\hat{X}_r^\tau, \hat{X}_r^\tau \right\rangle$ and $\Phi_r^\tau:=\hat{X}_r^\tau\hat{X}_r^{\tau\top}$. Applying It\^{o}'s formula, we have
\begin{equation}
d\hat{Y}_r^\tau=C^\tau(r)d r+\hat{Z}_r^\tau d W_r + \int_{\mathcal{E}}\hat{\tilde{Z}}_{(r,e)}^\tau \tilde{N}(d e,d r),\quad r\in[\tau,T],
 \label{definition of hatY with tau}
\end{equation}
where
$$
\begin{aligned}
 C^\tau(r)&:= -\Big\langle \hat{X}_r^\tau,\bar{g}_x(r)+\bar{g}_y(r)p_r+\bar{g}_z(r)\left[\bar{\sigma}_x(r)^\top p_r+q_r\right]+\bar{g}_{\tilde{z}}(r)\int_{\mathcal{E}}\Big[\bar{f}_x(r,e)^\top p_r +\tilde{q}_{(r,e)}\\
& \quad +\bar{f}_x(r,e)^\top \tilde{q}_{(r,e)}\Big]\nu (d e)\Big\rangle+ \langle p_r,\varepsilon_{\tau 4}(r)\rangle +\langle q_r,\varepsilon_{\tau 5}(r)\rangle
 +\int_{\mathcal{E}}\left\langle \tilde{q}_{(r,e)},\varepsilon_{\tau 6}(r,e)\right\rangle \nu(d e)\\
& \quad  +\operatorname{tr}\bigg\{-\frac{1}{2}\Phi_r^\tau\bar{g}_y(r)P_r -\frac{1}{2}\Phi_r^\tau\bar{g}_z(r)\bar{\sigma}_x(r)^\top P_r -\frac{1}{2}\Phi_r^\tau P_r\bar{g}_z(r)\bar{\sigma}_x(r)
 -\frac{1}{2}\Phi_r^\tau\bar{g}_z(r)Q_r\\
& \quad -\frac{1}{2}\Phi_r^\tau\bar{\sigma}_{x x}(r)^\top\bar{g}_z(r)p_r+\frac{1}{2}P_r\varepsilon_{\tau 7}(r) +\frac{1}{2}Q_r\varepsilon_{\tau 8}(r) -\frac{1}{2}\Phi_r^\tau \Psi(r)D^2\bar{g}(r) \Psi(r)^\top\\
& \quad -\int_{\mathcal{E}}\left[\frac{1}{2}\Phi_r^\tau\bar{f}_{x x}(r,e)^\top \bar{g}_{\tilde{z}}(r)p_r +\frac{1}{2}\Phi_r^\tau\bar{f}_{x x}(r,e)^\top \bar{g}_{\tilde{z}}(r)\tilde{q}_{(r,e)}
 +\frac{1}{2}\Phi_r^\tau\bar{g}_{\tilde{z}}(r)\bar{f}_x(r,e)^\top P_r\right.\\
& \quad +\frac{1}{2}\Phi_r^\tau P_r\bar{g}_{\tilde{z}}(r)\bar{f}_x(r,e)+\frac{1}{2}\Phi_r^\tau\bar{g}_{\tilde{z}}(r)\bar{f}_x(r,e)^\top \tilde{Q}_{(r,e)}
 +\frac{1}{2}\Phi_r^\tau\bar{g}_{\tilde{z}}(r)\tilde{Q}_{(r,e)}\bar{f}_x(r,e)\\
& \quad +\frac{1}{2}\tilde{Q}_{(r,e)}\varepsilon_{\tau 9}(r,e)+\frac{1}{2}\Phi_r^\tau\bar{g}_{\tilde{z}}(r)\bar{f}_x(r,e)^\top \tilde{Q}_{(r,e)}\bar{f}_x(r,e) \\
& \quad \left.+\frac{1}{2}\Phi_r^\tau\bar{g}_{\tilde{z}}(r)\bar{f}_x(r,e)^\top P_r\bar{f}_x(r,e)+\frac{1}{2}\Phi_r^\tau\bar{g}_{\tilde{z}}(r)\tilde{Q}_{(r,e)}\right]\nu(de)\bigg\}, \\
 \hat{Z}_r^\tau&:= \left\langle q_r,\hat{X}_r^\tau\right\rangle + \Big\langle p_r,\bar{\sigma}_x(r)\hat{X}_r^\tau+\frac{1}{2}\hat{X}_r^{\tau\top} \bar{\sigma}_{x x}(r)\hat{X}_r^\tau+\varepsilon_{\tau 5}(r)\Big\rangle \\
& \quad +\operatorname{tr}\bigg\{\frac{1}{2}\Phi_r^\tau Q_r+\frac{1}{2}P_r\Phi_r^\tau \bar{\sigma}_x(r)^\top+\frac{1}{2}P_r\bar{\sigma}_x(r)\Phi_r^\tau+\frac{1}{2}P_r\varepsilon_{\tau 8}(r)\bigg\}, \\
 \hat{\tilde{Z}}_{(r,e)}^\tau&:= \left\langle \tilde{q}_{(r,e)},\hat{X}_{r-}^\tau\right\rangle +\left\langle p_r,\bar{f}_x(r,e)\hat{X}_{r-}^\tau+\frac{1}{2}\hat{X}_{r-}^{\tau\top} \bar{f}_{x x}(r,e)\hat{X}_{r-}^\tau+\varepsilon_{\tau 6}(r,e)\right\rangle\\
& \quad +\left\langle \tilde{q}_{(r,e)},\bar{f}_x(r,e)\hat{X}_{r-}^\tau\right\rangle +\left\langle \tilde{q}_{(r,e)},\frac{1}{2}\hat{X}_{r-}^{\tau\top} \bar{f}_{x x}(r,e)\hat{X}_{r-}^\tau+\varepsilon_{\tau 6}(r,e)\right\rangle \\
& \quad +\operatorname{tr}\bigg\{\int_{\mathcal{E}}\left[\frac{1}{2}\Phi_{r-}^\tau\tilde{Q}_{(r,e)}+\frac{1}{2}P_r\bar{f}_x(r,e)\Phi_{r-}^\tau +\frac{1}{2}P_r\Phi_{r-}^\tau\bar{f}_x(r,e)^\top \right.\\
& \quad +\frac{1}{2}P_r\bar{f}_x(r,e)\Phi_{r-}^\tau\bar{f}_x(r,e)^\top+\frac{1}{2}\tilde{Q}_{(r,e)}\bar{f}_x(r,e)\Phi_{r-}^\tau +\frac{1}{2}\tilde{Q}_{(r,e)}\Phi_{r-}^\tau\bar{f}_x(r,e)^\top \\
& \quad \left.+\frac{1}{2}\tilde{Q}_{(r,e)}\bar{f}_x(r,e)\Phi_{r-}^\tau\bar{f}_x(r,e)^\top+\frac{1}{2}P_r\varepsilon_{\tau 9}(r, e)+\frac{1}{2}\tilde{Q}_{(r,e)}\varepsilon_{\tau 9}(r, e)\right]\nu(de) \bigg\},
\end{aligned}
$$
and
$$
\left\{\begin{aligned}
\varepsilon_{\tau 7}(r):= &\ \varepsilon_{\tau 1}(r) \hat{X}_r^{\tau\top}+\hat{X}_r^\tau \varepsilon_{\tau 1}(r)^\top +\bar{\sigma}_x(r) \hat{X}_r^\tau \varepsilon_{\tau 2}(r)^\top
 +\varepsilon_{z 2}(r) \hat{X}_r^{\tau\top} \bar{\sigma}_x(r)^\top \\
& +\varepsilon_{\tau 2}(r) \varepsilon_{\tau 2}(r)^\top +\int_{\mathcal{E}}\left\{\bar{f}_x(r, e) \hat{X}_r^\tau \varepsilon_{\tau 3}(r, e)^\top \right.\\
& \left.+\varepsilon_{\tau 3}(r, e) \hat{X}_r^{\tau\top} \bar{f}_x(r, e)^\top +\varepsilon_{\tau 3}(r, e) \varepsilon_{\tau 3}(r, e)^\top \right\} \nu(d e), \\
\varepsilon_{\tau 8}(r):= &\ \varepsilon_{\tau 2}(r) \hat{X}_r^{\tau\top} +\hat{X}_r^\tau \varepsilon_{\tau 2}(r)^\top, \\
\varepsilon_{\tau 9}(r, e):= &\ \bar{f}_x(r, e) \hat{X}_{r-}^\tau \varepsilon_{\tau 3}(r, e)^\top +\varepsilon_{\tau 3}(r, e) \hat{X}_{r-}^{\tau\top} \bar{f}_x(r, e)^\top +\hat{X}_{r-}^\tau \varepsilon_{\tau 3}(r, e)^\top \\
& +\varepsilon_{\tau 3}(r, e) \hat{X}_{r-}^{\tau\top} +\varepsilon_{\tau 3}(r, e) \varepsilon_{\tau 3}(r, e)^\top .
\end{aligned}\right.
$$
Define
\begin{equation}
\Delta{Y}_r^\tau:= Y_r^{\tau,\bar{X}_s^{t,x;\bar{u}};\bar{u}}-\hat{Y}_r^\tau, \
\Delta{Z}_r^\tau:= Z_r^{\tau,\bar{X}_s^{t,x;\bar{u}};\bar{u}}-\hat{Z}_r^\tau, \
\Delta{\tilde{Z}}_{(r,e)}^\tau:= \tilde{Z}_{(r,e)}^{\tau,\bar{X}_s^{t,x;\bar{u}};\bar{u}}-\hat{\tilde{Z}}_{(r,e)}^\tau.
 \label{definition of DeltaYZZ with tau}
\end{equation}
Similar to the Step 4 and Step 5 in the proof of Theorem \ref{nonsmooth state theorem}, we obtain
\begin{equation}
\left|\Delta{Y}_\tau^\tau-\bar{Y}_\tau^{t,x;\bar{u}}\right|^2 \leqslant \delta(|\tau-s|^2).
 \label{estimate of DeltaY-barY with tau}
\end{equation}

Note that $\left(\Omega, \mathcal{F}, \mathbf{P}\left(\cdot | \mathcal{F}_\tau^t\right), W_\cdot-W_\tau ; \left.u(\cdot)\right|_{[\tau, T]}\right) \in \mathcal{U}^w[\tau, T]$, $\mathbf{P}$-a.s.. Thus by the definition of the value function $V$, we have
$$
V\left(\tau, \bar{X}_s^{t, x ; \bar{u}}\right) \leqslant-Y^{\tau, \bar{X}^{t, x ; \bar{u}}_s ; \bar{u}}_\tau, \quad \mathbf{P} \text {-a.s.}.
$$
Taking $\mathbf{E}\left(\cdot \mid \mathcal{F}_s^t\right)$ on both sides and noting that $\bar{X}^{t, x ; \bar{u}}_s$ is $\mathcal{F}_s^t$-measurable, we have
\begin{equation}
V\left(\tau, \bar{X}^{t, x ; \bar{u}}_s\right) \leqslant \mathbf{E}\left[-Y^{\tau, \bar{X}_s^{t, x ; \bar{u}} ; \bar{u}}_\tau \Big| \mathcal{F}_s^t\right], \quad \mathbf{P}\text {-a.s.}.
 \label{estimate of V with tau}
\end{equation}

By the DPP of \cite{LiPeng2009}, choose a common subset $\Omega_0 \subseteq \Omega$ with $\mathbf{P}\left(\Omega_0\right)=1$ such that for any $\omega_0 \in \Omega_0$, the following holds:
$$
\left\{\begin{array}{l}
V\left(s, \bar{X}^{t, x ; \bar{u}}_{\left(s, \omega_0\right)}\right)=-\bar{Y}^{t, x ; \bar{u}}_{(s, \omega_0)}, \text{ and }(\ref{estimate of sup X with tau}), (\ref{estimate of tau 1-6}), (\ref{estimate of V with tau})
 \text { are satisfied for any } \\
\text{ rational}\ \tau>s,\ \left(\Omega, \mathcal{F}, \mathbf{P}(\cdot | \mathcal{F}_s^t)(\omega_0), W_\cdot-W_s, \tilde{N}(\cdot,\cdot) ; \left.u(\cdot)\right|_{[s, T]}\right) \in \mathcal{U}^w[s, T], \\
\text { and } \sup\limits_{s \leqslant r \leqslant T}\left[|p_{\left(r, \omega_0\right)}|+|P_{\left(r, \omega_0\right)}|\right]<\infty.
\end{array}\right.
$$
Let $\omega_0 \in \Omega_0$ be fixed, and then for any rational number $\tau>s$, by (\ref{estimate of DeltaY-barY with tau}) we have
\begin{equation}
\begin{aligned}
& V\left(\tau, \bar{X}^{t, x ; \bar{u}}_{\left(s, \omega_0\right)}\right)-V\left(s, \bar{X}^{t, x ; \bar{u}}_{\left(s, \omega_0\right)}\right)
 \leqslant \mathbf{E}\left[-Y^{\tau, \bar{X}^{t, x ; \bar{u}}_s ; \bar{u}}_\tau+\bar{Y}^{t, x ; \bar{u}}_s \Big| \mathcal{F}_s^t\right]\left(\omega_0\right) \\
& \leqslant \mathbf{E}\left[-Y^{\tau, \bar{X}^{t, x ; \bar{u}}_s ; \bar{u}}_\tau+\bar{Y}^{t, x ; \bar{u}}_\tau-\bar{Y}^{t, x ; \bar{u}}_\tau+\bar{Y}^{t, x ; \bar{u}}_s \Big| \mathcal{F}_s^t\right]\left(\omega_0\right) \\
&= \mathbf{E}\left[-(\Delta{Y}_\tau^\tau-\bar{Y}_\tau^{t,x;\bar{u}})-\hat{Y}_\tau^\tau \Big| \mathcal{F}_s^t \right]\left(\omega_0\right)
 +\mathbf{E}\left[-\bar{Y}^{t, x ; \bar{u}}_\tau+\bar{Y}^{t, x ; \bar{u}}_s \Big| \mathcal{F}_s^t\right]\left(\omega_0\right) \\
&= \mathbf{E}\left[-\langle p_\tau, \hat{X}_\tau^\tau \rangle -\frac{1}{2} \langle P_\tau\hat{X}_\tau^\tau, \hat{X}_\tau^\tau \rangle \Big| \mathcal{F}_s^t\right]\left(\omega_0\right)
 +\delta(|\tau-s|)+\mathbf{E}\left[-\bar{Y}^{t, x ; \bar{u}}_\tau+\bar{Y}^{t, x ; \bar{u}}_s \Big| \mathcal{F}_s^t\right]\left(\omega_0\right) \\
&= \mathbf{E}\left[\int_s^\tau g\bigg(r,\bar{X}_r^{t,x;\bar{u}},-V(r,\bar{X}_r^{t,x;\bar{u}}),\sigma(r,\bar{X}_r^{t,x;\bar{u}},\bar{u})^\top
 p_r,-\int_{\mathcal{E}}\Big[V\left(r,\bar{X}_r^{t,x;\bar{u}}+\bar{f}(r,e)\right)\right. \\
& \qquad \qquad \left.-V(r,\bar{X}_r^{t,x;\bar{u}})\Big]\nu(de),\bar{u}\bigg) d r-\langle p_\tau, \hat{X}_\tau^\tau \rangle -\frac{1}{2} \langle P_\tau\hat{X}_\tau^\tau, \hat{X}_\tau^\tau \rangle \Big| \mathcal{F}_s^t\right]\left(\omega_0\right)+\delta(|\tau-s|).
\end{aligned}
 \label{V-V with tau}
\end{equation}

Now let us estimate the terms on the right-hand side of (\ref{V-V with tau}). To this end, noting that $\omega_0 \in \Omega_0$ is fixed, thus for any square-integrable functions $\phi(\cdot), \hat{\phi}(\cdot), \varphi(\cdot), \tilde{\varphi}(\cdot,e)\in L^2([0,T];\mathbf{R}^n), \forall e\in\mathcal{E}$, we have
$$
\begin{aligned}
& \mathbf{E}\left[\left\langle \int_s^\tau \phi(r)d r, \int_s^\tau \hat{\phi}(r)d r \right\rangle \bigg| \mathcal{F}_s^t\right](\omega_0) \\
\leqslant &\ \left\{\mathbf{E}\left[\left|\int_s^\tau \phi(r)d r\right|^2 \bigg| \mathcal{F}_s^t\right](\omega_0)\right\}^{\frac{1}{2}}
 \cdot \left\{\mathbf{E}\left[\left|\int_s^\tau \hat{\phi}(r)d r\right|^2 \bigg| \mathcal{F}_s^t\right](\omega_0)\right\}^{\frac{1}{2}} \\
\leqslant &\ (\tau-s)\left\{\int_s^\tau \mathbf{E}\left[|\phi(r)|^2 \big| \mathcal{F}_s^t\right](\omega_0)d r
 \cdot \int_s^\tau \mathbf{E}\left[|\hat{\phi}(r)|^2 \big| \mathcal{F}_s^t\right](\omega_0)d r\right\}^{\frac{1}{2}} \\
\leqslant &\ \delta(|\tau-s|),\quad \text{as } \tau \downarrow s, \ \forall s \in [t,T),
\end{aligned}
$$
$$
\begin{aligned}
& \mathbf{E}\left[\left\langle \int_s^\tau \phi(r)d r, \int_s^\tau \varphi(r)d W_r \right\rangle \bigg| \mathcal{F}_s^t\right](\omega_0) \\
\leqslant &\ \left\{\mathbf{E}\left[\left|\int_s^\tau \varphi(r)d r\right|^2 \bigg| \mathcal{F}_s^t\right](\omega_0)\right\}^{\frac{1}{2}}
 \cdot \left\{\mathbf{E}\left[\left|\int_s^\tau \psi(r)d W_r\right|^2 \bigg| \mathcal{F}_s^t\right](\omega_0)\right\}^{\frac{1}{2}} \\
\leqslant &\ (\tau-s)^{\frac{1}{2}}\left\{\int_s^\tau \mathbf{E}\left[|\varphi(r)|^2 \big| \mathcal{F}_s^t\right](\omega_0)d r
 \cdot \int_s^\tau \mathbf{E}\left[|\psi(r)|^2 \big| \mathcal{F}_s^t\right](\omega_0)d r\right\}^{\frac{1}{2}} \\
\leqslant &\ \delta(|\tau-s|),\quad \text{as } \tau \downarrow s, \ \text{a.e.} \ s \in [t,T),
\end{aligned}
$$
and
$$
\begin{aligned}
& \mathbf{E}\left[\left\langle \int_s^\tau \phi(r)d r, \int_s^\tau\int_{\mathcal{E}} \tilde{\varphi}(r,e)\tilde{N}(d e,d r) \right\rangle \bigg| \mathcal{F}_s^t\right](\omega_0) \\
\leqslant &\ \left\{\mathbf{E}\left[\left|\int_s^\tau \phi(r)d r\right|^2 \bigg| \mathcal{F}_s^t\right](\omega_0)\right\}^{\frac{1}{2}}
 \cdot\left\{\mathbf{E}\left[\left|\int_{\mathcal{E}}\int_s^\tau \tilde{\varphi}(r,e)\tilde{N}(d e,d r)\right|^2 \bigg| \mathcal{F}_s^t\right](\omega_0)\right\}^{\frac{1}{2}} \\
\leqslant &\ (\tau-s)^{\frac{1}{2}}\left\{\int_s^\tau \mathbf{E}\left[|\phi(r)|^2 \big| \mathcal{F}_s^t\right](\omega_0)d r
 \cdot \int_{\mathcal{E}}\int_s^\tau \mathbf{E}\left[|\xi(r,e)|^2 \big| \mathcal{F}_s^t\right](\omega_0)\nu (d e)d r\right\}^{\frac{1}{2}} \\
\leqslant &\ \delta(|\tau-s|),\quad \text{as } \tau \downarrow s, \ \text{a.e.} \ s \in [t,T).
\end{aligned}
$$
All the last inequalities in the above three inequalities is due to the fact that the sets of right Lebesgue points have full Lebesgue measures for integrable functions by Lemma \ref{Lebesgue point}, and $s \mapsto \mathcal{F}_s^t$ is right continuous in $s$. Thus by (\ref{first-order variational equation with tau}) and (\ref{first-order adjoint equation}), we obtain that
\begin{equation*}
\begin{aligned}
&\ \mathbf{E}\left[\langle p_\tau,\hat{X}_\tau^\tau \rangle \big| \mathcal{F}_s^t\right](\omega_0) \\
=&\ \mathbf{E}\left[\left\langle p_s,\hat{X}_\tau^\tau \right\rangle+ \left\langle p_\tau-p_s,\hat{X}_\tau^\tau\right\rangle \Big| \mathcal{F}_s^t\right](\omega_0) \\
=&\ \mathbf{E}\left[\left\langle p_s,-\int_s^\tau \bar{b}(r)d r -\int_s^\tau \bar{\sigma}(r)d W_r -\int_{\mathcal{E}}\int_s^\tau\bar{f}(r,e)\tilde{N}(d e,d r)\right\rangle \right.\\
& \quad +\left\langle -\int_s^\tau \bigg\{\bar{b}_x(r)^\top p_r+\bar{\sigma}_x(r)^\top q_r+\bar{g}_x(r)+\bar{g}_y(r)p_r+\bar{g}_z(r)\left[\bar{\sigma}_x(r)^\top p_r+q_r\right] \right.\\
\end{aligned}
\end{equation*}
\begin{equation}
\begin{aligned}
& \quad +\bar{g}_{\tilde{z}}(r)\int_{\mathcal{E}}\left[\bar{f}_x(r,e)^\top p_r+\tilde{q}_{(r,e)}+\bar{f}_x(r,e)^\top \tilde{q}_{(r,e)}\right]\nu(de)+\int_{\mathcal{E}}\bar{f}_x(r,e)^\top \tilde{q}_{(r,e)}\nu(de)\bigg\}dr \\
& \quad +\int_s^\tau q_r d W_r+\int_{\mathcal{E}}\int_s^\tau \tilde{q}_{(r,e)}\tilde{N}(d e,d r), \\
& \qquad \left.\left.-\int_s^\tau \bar{b}(r)d r -\int_s^\tau \bar{\sigma}(r)d W_r -\int_{\mathcal{E}}\int_s^\tau\bar{f}(r,e)\tilde{N}(d e,d r)\right\rangle \bigg| \mathcal{F}_s^t\right](\omega_0) \\
\leqslant&\ \mathbf{E}\left[\left\langle p_s,-\int_s^\tau \bar{b}(r)d r \right\rangle-\int_s^\tau \left\langle q_r,\bar{\sigma}(r)\right\rangle d r
 -\int_{\mathcal{E}}\int_s^\tau \left\langle \tilde{q}_{(r,e)},\bar{f}(r,e)\right\rangle \nu(d e)dr \bigg| \mathcal{F}_s^t \right](\omega_0) \\
& +\delta(|\tau-s|),\quad \text{as } \tau \downarrow s, \ \text{a.e.} \ s \in [t,T).
\end{aligned}
 \label{estimate of pX with tau}
\end{equation}
Similarly, by (\ref{second-order variational equation with tau}) and (\ref{second-order adjoint equation}), we obtain
\begin{equation}
\begin{aligned}
&\ \mathbf{E}\left[\left\langle P_\tau \hat{X}_\tau^\tau,\hat{X}_\tau^\tau \right\rangle \Big| \mathcal{F}_s^t\right](\omega_0)
 =\mathbf{E}\left[\left\langle P_s \hat{X}_\tau^\tau,\hat{X}_\tau^\tau \right\rangle +\left\langle (P_\tau-P_s) \hat{X}_\tau^\tau,\hat{X}_\tau^\tau \right\rangle \Big| \mathcal{F}_s^t\right](\omega_0) \\
\leqslant&\ \mathbf{E}\bigg[\int_s^\tau \left\langle P_s\bar{\sigma}(r),\bar{\sigma}(r) \right\rangle dr +\int_{\mathcal{E}}\int_s^\tau \left\langle P_s\bar{f}(r,e),\bar{f}(r,e) \right\rangle \nu(de)dr \\
&\ +\int_{\mathcal{E}}\int_s^\tau \left\langle \tilde{Q}_{(r,e)}\bar{f}(r,e),\bar{f}(r,e) \right\rangle \nu(de)dr \bigg| \mathcal{F}_s^t\bigg](\omega_0)\\
&\ +\delta(|\tau-s|),\quad \text{as } \tau \downarrow s, \ \text{a.e.} \ s \in [t,T).
\end{aligned}
 \label{estimate of PXX with tau}
\end{equation}
It follows from (\ref{V-V with tau}), (\ref{estimate of pX with tau}) and (\ref{estimate of PXX with tau}) that for any rational $\tau \in (s,T]$,
\begin{equation}
\begin{aligned}
&\ V\left(\tau, \bar{X}^{t, x ; \bar{u}}_{\left(s, \omega_0\right)}\right)-V\left(s, \bar{X}^{t, x ; \bar{u}}_{\left(s, \omega_0\right)}\right) \\
\leqslant&\ \mathbf{E}\left[\int_s^\tau g\bigg(r,\bar{X}_r^{t,x;\bar{u}},-V(r,\bar{X}_r^{t,x;\bar{u}}),\sigma(r,\bar{X}_r^{t,x;\bar{u}},\bar{u})^\top p_r,
 -\int_{\mathcal{E}}\Big[V\left(r,\bar{X}_r^{t,x;\bar{u}}+\bar{f}(r,e)\right)\right. \\
& \qquad \quad \left.-V(r,\bar{X}_r^{t,x;\bar{u}})\Big]\nu(de),\bar{u}\bigg) d r-\langle p_\tau, \hat{X}_\tau^\tau \rangle
 -\frac{1}{2} \langle P_\tau\hat{X}_\tau^\tau, \hat{X}_\tau^\tau \rangle \bigg| \mathcal{F}_s^t\right]\left(\omega_0\right) +\delta(|\tau-s|) \\
\leqslant&\ \mathbf{E}\left[\int_s^\tau g\bigg(r,\bar{X}_r^{t,x;\bar{u}},-V(r,\bar{X}_r^{t,x;\bar{u}}),
 \sigma(r,\bar{X}_r^{t,x;\bar{u}},\bar{u})^\top p_r,-\int_{\mathcal{E}}\Big[V\left(r,\bar{X}_r^{t,x;\bar{u}}+\bar{f}(r,e)\right)\right. \\
& \left.-V(r,\bar{X}_r^{t,x;\bar{u}})\Big]\nu(de),\bar{u}\bigg) d r+\left\langle p_s,\int_s^\tau \bar{b}(r)d r \right\rangle +\int_s^\tau \left\langle q_r,\bar{\sigma}(r)\right\rangle d r\right.\\
& \quad +\int_{\mathcal{E}}\int_s^\tau \left\langle \tilde{q}_{(r,e)},\bar{f}(r,e)\right\rangle \nu(d e)dr-\frac{1}{2}\int_s^\tau \left\langle P_s\bar{\sigma}(r),\bar{\sigma}(r) \right\rangle dr  \\
& \quad -\frac{1}{2}\int_{\mathcal{E}}\int_s^\tau \left\langle (P_s+\tilde{Q}_{(r,e)})\bar{f}(r,e),\bar{f}(r,e) \right\rangle \nu(de)dr \bigg| \mathcal{F}_s^t\bigg](\omega_0)+\delta(|\tau-s|) \\
=&\ (\tau-s)\mathcal{H}\left(s,\bar{X}_s^{t,x;\bar{u}},\bar{u}_s\right)+\delta(|\tau-s|).
\end{aligned}
 \label{estimate of V-V with tau}
\end{equation}
By definition (\ref{definition of partial super-subjets to t}), we obtain that the first relation of (\ref{relation of H and DV}) holds for any (not only rational numbers) $\tau \in (s,T]$. Finally, fix an $\omega \in \Omega$ such that (\ref{estimate of V-V with tau}) holds for any $\tau \in (s,T]$. Then for any $\hat{q} \in \mathcal{D}_{t+}^{1,-}V(s,\bar{X}_s^{t,x;\bar{u}})$, by definition (\ref{definition of partial super-subjets to t}) and (\ref{estimate of V-V with tau}) we have
$$
\begin{aligned}
0 & \leqslant \liminf _{\tau \downarrow s}\left\{\frac{V\left(\tau, \bar{X}_s^{t, x ; \bar{u}}\right)-V\left(s, \bar{X}_s^{t, x ; \bar{u}}\right)-\hat{q}(\tau-s)}{|\tau-s|}\right\} \\
& \leqslant \liminf _{\tau \downarrow s}\left\{\frac{\left(\mathcal{H}\left(s, \bar{X}_s^{t, x ; \bar{u}}, \bar{u}_s\right)-\hat{q}\right)(\tau-s)}{|\tau-s|}\right\} .
\end{aligned}
$$
Then, it is necessary that $\hat{q}\leqslant \mathcal{H}\left(s, \bar{X}_s^{t, x ; \bar{u}}, \bar{u}_s\right)$. Thus, the second relation of (\ref{relation of H and DV}) holds. The proof is complete.
\end{proof}

\begin{Remark}
\cite{ShiWu2011} studied the relationship between the general MP and DPP in the framework of viscosity solutions for systems which satisfies SDEP, and obtained Theorem 3.3, which is similar to our Theorem 4.2. However, (63) in \cite{ShiWu2011} lacks a term $\int_{\mathcal{E}}\int_s^\tau \left\langle \tilde{Q}_{(r,e)}\bar{f}(r,e),\bar{f}(r,e) \right\rangle \nu(de)dr$ compared to (\ref{estimate of PXX with tau}).

After we complete this term, (64) in \cite{ShiWu2011} will turn into the following inequality:
\begin{equation*}
\begin{aligned}
&V(\tau,\left.\bar{x}^{s, y ; \bar{u}}\left(t, \omega_0\right)\right)-V\left(t, \bar{x}^{s, y ; \bar{u}}\left(t, \omega_0\right)\right)\\
 \leq  &(\tau-t)\mathcal{H}^{\prime}(t,\bar{x}^{s,y;\bar{u}}(t,\omega_0),\bar{u})+o(|\tau-t|), \ \text{as}\ \tau \downarrow t,\ \text{a.e.}\ t\in[s,T),
\end{aligned}
 \label{new inequality in ShiWu2011}
\end{equation*}
which $\mathcal{H}^{\prime}$ is defined by (21) in \cite{ShiWu2011}. From definition (31) in \cite{ShiWu2011}, we can get the revised result of Theorem 3.3, which is
\begin{equation}
\begin{aligned}
\mathcal{H}^{\prime}(t,\bar{x}^{s,y;\bar{u}}(t),\bar{u})\in \mathcal{P}_{t +}^{1,+}V(t,\bar{x}^{s,y;\bar{u}}(t)),\ \text{a.e.}\ t\in [s,T], \mathbf{P}\text{-a.s.},
\end{aligned}
 \label{revised result in ShiWu2011}
\end{equation}
here $\mathcal{P}_{t +}^{1,+}$ is defined by (32) in \cite{ShiWu2011}.
\end{Remark}

\begin{mythm}
Under the condition of Theorem \ref{nonsmooth state theorem} or Theorem \ref{nonsmooth time theorem}, for a.e. $s \in [t,T],\mathbf{P}$-a.s.,
\begin{equation}
\begin{aligned}
& [\mathcal{H}(s,\bar{X}_s^{t,x;\bar{u}},\bar{u}_s),\infty)\times \{-p_s\}\times [-P_s,\infty)\subseteq \mathcal{D}_{t+, x}^{1,2,+} V(s, \bar{X}_s^{t,x;\bar{u}}), \\
& \mathcal{D}_{t+, x}^{1,2,-} V(s, \bar{X}_s^{t,x;\bar{u}}) \subseteq (-\infty,\mathcal{H}(s,\bar{X}_s^{t,x;\bar{u}},\bar{u}_s)]\times \{-p_s\}\times (-\infty,-P_s].
\end{aligned}
 \label{relation of p,P,H and V}
\end{equation}
\end{mythm}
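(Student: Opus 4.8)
The subjet inclusion in (\ref{relation of p,P,H and V}) is the easy half and requires no new work: if $(q,p,P)\in\mathcal{D}_{t+,x}^{1,2,-}V(s,\bar{X}_s^{t,x;\bar{u}})$, then setting $t=s$ in the defining inequality shows $(p,P)\in\mathcal{D}_x^{2,-}V(s,\bar{X}_s^{t,x;\bar{u}})$, while setting $x=\bar{X}_s^{t,x;\bar{u}}$ shows $q\in\mathcal{D}_{t+}^{1,-}V(s,\bar{X}_s^{t,x;\bar{u}})$; Theorem \ref{nonsmooth state theorem} (via (\ref{relation of p,P and V})) and Theorem \ref{nonsmooth time theorem} (via (\ref{relation of H and DV})) then force $p=-p_s$, $P\leqslant-P_s$ and $q\leqslant\mathcal{H}(s,\bar{X}_s^{t,x;\bar{u}},\bar{u}_s)$, which is exactly the second inclusion. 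The superjet inclusion is the genuine content: the parabolic superjet is \emph{not} the product of the two partial superjets, so it cannot be read off from Theorems \ref{nonsmooth state theorem} and \ref{nonsmooth time theorem} alone and must be produced by a single joint perturbation. Since $\mathcal{D}_{t+,x}^{1,2,+}V+[0,\infty)\times\{0\}\times\mathcal{S}_+^n=\mathcal{D}_{t+,x}^{1,2,+}V$, it suffices to place the corner $\big(\mathcal{H}(s,\bar{X}_s^{t,x;\bar{u}},\bar{u}_s),-p_s,-P_s\big)$ in the superjet, after which monotonicity yields the whole product set.

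The plan is to fix $\omega_0$ in a full--measure set $\Omega_0$ on which $V(s,\xi)=-\bar{Y}_{(s,\omega_0)}^{t,x;\bar{u}}$ with $\xi:=\bar{X}_{(s,\omega_0)}^{t,x;\bar{u}}$, the adjoint processes are bounded, and the estimates of the two theorems hold for all rational $(\tau,z)\in(s,T]\times\mathbf{R}^n$. Running (\ref{SDEP with z})--(\ref{BSDEP with z}) from $(\tau,z)$ under $\bar{u}$ and using the DPP of \cite{LiPeng2009}, I would obtain $V(\tau,z)\leqslant\mathbf{E}[-Y_\tau^{\tau,z;\bar{u}}\mid\mathcal{F}_s^t](\omega_0)$; inserting $\pm\bar{Y}_\tau^{t,x;\bar{u}}$ and invoking the joint analogue of the remainder bounds (\ref{estimate of DeltaY-barY}) and (\ref{estimate of DeltaY-barY with tau}) reduces the task to estimating $\mathbf{E}[-\hat{Y}_\tau-\bar{Y}_\tau^{t,x;\bar{u}}+\bar{Y}_s^{t,x;\bar{u}}\mid\mathcal{F}_s^t](\omega_0)$ up to $o(|\tau-s|+|z-\xi|^2)$, where $\hat{Y}_\tau:=\langle p_\tau,\hat{X}_\tau\rangle+\tfrac12\langle P_\tau\hat{X}_\tau,\hat{X}_\tau\rangle$ and $\hat{X}_\tau:=z-\bar{X}_\tau^{t,x;\bar{u}}$. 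The decisive device is the splitting $\hat{X}_\tau=a+b$ with $a:=z-\xi$ (which is $\mathcal{F}_s^t$--measurable) and $b:=\bar{X}_s^{t,x;\bar{u}}-\bar{X}_\tau^{t,x;\bar{u}}=-\int_s^\tau\bar{b}(r)dr-\int_s^\tau\bar{\sigma}(r)dW_r-\int_s^\tau\!\int_{\mathcal{E}}\bar{f}(r,e)\tilde{N}(de,dr)$, which separates $\hat{Y}_\tau$ into a pure--state part $-\langle p_\tau,a\rangle-\tfrac12\langle P_\tau a,a\rangle$, a pure--time part $-\langle p_\tau,b\rangle-\tfrac12\langle P_\tau b,b\rangle$, and one genuine cross term $-\langle P_\tau a,b\rangle$.

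The pure--time part, together with $\mathbf{E}[\bar{Y}_s^{t,x;\bar{u}}-\bar{Y}_\tau^{t,x;\bar{u}}\mid\mathcal{F}_s^t]=\mathbf{E}[\int_s^\tau\bar{g}(r)dr\mid\mathcal{F}_s^t]$, is precisely the computation (\ref{estimate of pX with tau})--(\ref{estimate of PXX with tau}) of Theorem \ref{nonsmooth time theorem} and contributes $(\tau-s)\mathcal{H}(s,\xi,\bar{u}_s)+o(|\tau-s|)$, with $\mathcal{H}$ as in (\ref{definition of HH}). In the pure--state part I would replace $p_\tau,P_\tau$ by $p_s,P_s$: because the stochastic integrals in (\ref{first-order adjoint equation})--(\ref{second-order adjoint equation}) are killed by $\mathbf{E}[\,\cdot\mid\mathcal{F}_s^t]$, one has $\mathbf{E}[p_\tau-p_s\mid\mathcal{F}_s^t],\,\mathbf{E}[P_\tau-P_s\mid\mathcal{F}_s^t]=O(|\tau-s|)$, so the substitution error is $O(|\tau-s|)\,(|a|+|a|^2)=o(|\tau-s|+|a|^2)$, leaving exactly $-\langle p_s,a\rangle-\tfrac12\langle P_s a,a\rangle$.

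The hard part is the cross term $\mathbf{E}[\langle P_\tau a,b\rangle\mid\mathcal{F}_s^t](\omega_0)=\langle a,\mathbf{E}[P_\tau b\mid\mathcal{F}_s^t](\omega_0)\rangle$: a crude Cauchy--Schwarz estimate gives only $O(|a|\,|\tau-s|^{1/2})$, which is \emph{not} $o(|\tau-s|+|a|^2)$ and would destroy the superjet expansion. It is saved by the same covariation argument already used for (\ref{estimate of PXX with tau}): writing $P_\tau=P_s+\int_s^\tau(\cdots)dr+\int_s^\tau Q_r dW_r+\int_s^\tau\!\int_{\mathcal{E}}\tilde{Q}_{(r,e)}\tilde{N}(de,dr)$ and pairing it with $b$, the independence of $W$ and $N$ together with the It\^o and compensated--measure isometries make the $dr\times dr$, $dW_r\times dW_r$ and $\tilde{N}\times\tilde{N}$ contributions all of order $|\tau-s|$, so that $\mathbf{E}[P_\tau b\mid\mathcal{F}_s^t]=O(|\tau-s|)$ and the cross term is $O(|a|\,|\tau-s|)=o(|\tau-s|+|a|^2)$ by the elementary bound $|a|\,|\tau-s|\leqslant|\tau-s|^{1/2}(|\tau-s|+|a|^2)$. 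Collecting the three groups gives, for all rational $(\tau,z)$, the inequality $V(\tau,z)-V(s,\xi)\leqslant(\tau-s)\mathcal{H}(s,\xi,\bar{u}_s)-\langle p_s,z-\xi\rangle-\tfrac12\langle P_s(z-\xi),z-\xi\rangle+o(|\tau-s|+|z-\xi|^2)$; the continuity of $V(\cdot,\cdot)$ and the $z$--independence of $\delta(\cdot)$ extend it to all $(\tau,z)$, and by the definition of $\mathcal{D}_{t+,x}^{1,2,+}$ this places the corner in the superjet. Monotonicity then yields the first inclusion of (\ref{relation of p,P,H and V}), completing the proof.
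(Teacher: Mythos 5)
Your proof is correct and takes the same route the paper intends: the paper's entire proof of this theorem is the single sentence that the result follows by combining the proofs of Theorem \ref{nonsmooth state theorem} and Theorem \ref{nonsmooth time theorem}, and your argument is exactly that combination --- the projection of the joint subjet onto the partial sub-jets (forcing $p=-p_s$, $P\leqslant -P_s$, $q\leqslant\mathcal{H}$ via (\ref{relation of p,P and V}) and (\ref{relation of H and DV})) for the second inclusion, and a joint perturbation in $(\tau,z)$ for the first. Your explicit control of the cross term $-\langle P_\tau a,b\rangle$, via $\mathbf{E}\left[P_\tau b\mid\mathcal{F}_s^t\right]=O(|\tau-s|)$ at right Lebesgue points (the same covariation computation as in (\ref{estimate of pX with tau})--(\ref{estimate of PXX with tau}), consistent with the a.e.\ $s$ statement) together with $|a|\,|\tau-s|\leqslant\frac12|\tau-s|^{1/2}\left(|\tau-s|+|a|^2\right)$, is sound and is precisely the step, present in neither separate proof, that the paper's one-line proof leaves implicit.
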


\begin{proof}
It clear that (\ref{relation of p,P,H and V}) can be proved by combining the proofs of (\ref{relation of p,P and V}) and (\ref{relation of H and DV}).
\end{proof}

\section{Some examples }

\qquad In this section, we give some examples to illustrate the main results (Theorem \ref{smooth theorem}, Theorem \ref{nonsmooth state theorem} and Theorem \ref{nonsmooth time theorem}) of this paper.

$\mathbf{Example\ 5.1.}$ \ Given $t\in[0,T)$, consider the following controlled FBSDEP for $s\in[t,T]$:
\begin{equation}
\left\{\begin{aligned}
dX_s^{t,x;u}&=u_sd s + u_sd W_s + u_s \tilde{N}(d s), \\
-dY_s^{t,x;u}&=\left(u_sX_s^{t,x;u}-u_s-Z_s^{t,x;u}-\tilde{Z}_s^{t,x;u}\right)d s-Z_s^{t,x;u}d W_s-\tilde{Z}_s^{t,x;u} \tilde{N}(d s), \\
X_t^{t,x;u}&=x, \quad Y_T^{t,x;u}=\frac{1}{2}\left(X_T^{t,x;u}\right)^2,
\end{aligned}\right.
 \label{state equation of example 1}
\end{equation}
with $\mathbf{U}=[-3,-2]\cup [1,2]$. Here $n=1$ and $\tilde{N}(\cdot)$ is a standard Poisson process. The cost functional is defined as (\ref{cost functional}).

Then the HJB equation (\ref{HJB equation}) writes as
\begin{equation}
\left\{\begin{aligned}
&-v_t(t,x)+\sup_{u\in\mathbf{U}} \left\{-\frac{1}{2}v_{x x}(t,x)u^2+uv_x(t,x)+ux-u\right\}=0, \\
&\quad v(T,x)=-\frac{1}{2}x^2.
\end{aligned}\right.
 \label{HJB equation of example 1}
\end{equation}
We conjecture the unique solution is $V(t,x)=-\frac{1}{2}x^2$. We can verify that (\ref{HJB equation of example 1}) holds by $V_t(t,x)\equiv 0, V_x(t,x)=-x$, and $V_{x x}(t,x)=-1$. Moreover, the first-order and second-order adjoint equations (\ref{first-order adjoint equation}) and (\ref{second-order adjoint equation}) can be written as
$$
\left\{\begin{aligned}
-dp_s&=\left(-\bar{u}_s-q_s-\tilde{q}_s\right)ds-q_sd W_s-\tilde{q}_s\tilde{N}(ds), \\
p_T&=x,
\end{aligned}\right.
$$
$$
\left\{\begin{aligned}
-dP_s&=\left(-Q_s-\tilde{Q}_s\right)d s-Q_sd W_s-\tilde{Q}_s\tilde{N}(ds), \\
P_T&=1,
\end{aligned}\right.
$$
which, by It\^{o}'s formula, admits a unique solution $(p_s,q_s,\tilde{q}_s)=\left(\bar{X}_s^{t,x;\bar{u}},1,1\right)$ and $(P_s,Q_s,\tilde{Q}_s)=(1,0,0)$, where the optimal control $\bar{u}_s\equiv1$, for $s\in[t,T]$. Thus, the (\ref{relation of p and V}) and (\ref{relation of P and V}) holds. In fact, we can find another example that the strict inequality (\ref{relation of P and V}) happens.

$\mathbf{Example\ 5.2.}$ \ Given $t\in[0,T)$, consider the following controlled FBSDEP for $s\in[t,T]$ ($n=1$):
\begin{equation}
\left\{\begin{aligned}
dX_s^{t,x;u}&=u_sd s + u_sd W_s + u_s \tilde{N}(d s), \\
-dY_s^{t,x;u}&=\Big\{\operatorname{ln}X_s^{t,x;u}+\left({X_s^{t,x;u}}\right)^{-1}u_s+\left({X_s^{t,x;u}}\right)^{-3}u_s-\frac{1}{2}\left({X_s^{t,x;u}}\right)^{-4}\\
              &\quad -Y_s^{t,x;u}-Z_s^{t,x;u}-\tilde{Z}_s^{t,x;u}\Big\}d s -Z_s^{t,x;u}d W_s-\tilde{Z}_s^{t,x;u} \tilde{N}(d s), \\
X_t^{t,x;u}&=x>0, \quad Y_T^{t,x;u}=\operatorname{ln}X_T^{t,x;u},
\end{aligned}\right.
 \label{state equation of example 2}
\end{equation}
with $\mathbf{U}=\mathbf{R}^+$. The cost functional is defined as (\ref{cost functional}) and the corresponding generalized HJB equation (\ref{HJB equation}) reads as
\begin{equation}
\left\{\begin{aligned}
&-v_t(t,x)+\sup_{u\in\mathbf{U}} \left\{-\frac{1}{2}v_{x x}(t,x)u^2+v(t,x)+v_x(t,x)u+\operatorname{ln}x+x^{-1}u+x^{-3}u-\frac{1}{2}x^{-4}\right\}=0, \\
&\quad v(T,x)=-\operatorname{ln}x.
\end{aligned}\right.
 \label{HJB equation of example 2}
\end{equation}

Let us verify the solution to (\ref{HJB equation of example 2}) with $V(t,x)=-\operatorname{ln}x, V_x(t,x)=-x^{-1}, V_{x x}(t,x)=x^{-2}, V_t(t,x)\equiv0$. We have
\begin{equation*}
\sup_{u\in\mathbf{U}} \left\{-\frac{1}{2}x^{-2}u^2+x^{-3}u-\frac{1}{2}x^{-4}\right\}=\ \frac{1}{2}x^{-4}\sup_{u\in\mathbf{U}} \left\{-(xu-1)^2\right\}=0,
\end{equation*}
thus $\bar{u}_s=(\bar{X}_s^{t,x;\bar{u}})^{-1}$.

Next, we only consider the second-order adjoint equation (\ref{second-order adjoint equation}) which write as
\begin{equation}
\left\{\begin{aligned}
-dP_s=&\Big\{-P_s-Q_s-\tilde{Q}_s-\left(\bar{X}_s^{t,x;u}\right)^{-2}+2\left(\bar{X}_s^{t,x;u}\right)^{-3}\bar{u}_s+12\left(\bar{X}_s^{t,x;u}\right)^{-5}\bar{u}_s\\
      &\quad -10\left(\bar{X}_s^{t,x;u}\right)^{-6}\Big\}d s -Q_sd W_s-\tilde{Q}_s\tilde{N}(d s)\\
P_T=&-(\bar{X}_T^{t,x;u})^{-2}.
\end{aligned}\right.
 \label{second-order adjoint equation of example 2}
\end{equation}
Then, applying It\^{o}'s formula to $(\bar{X}_s^{t,x;\bar{u}})^{-2}$, we obtain that
\begin{equation}
\begin{aligned}
-d(-(\bar{X}_s^{t,x;\bar{u}})^{-2})=&\ \Big\{-2(\bar{X}_s^{t,x;\bar{u}})^{-3}\bar{u}_s+3(\bar{X}_s^{t,x;\bar{u}})^{-4}\bar{u}_s^2\Big\}d s-2(\bar{X}_s^{t,x;\bar{u}})^{-3}\bar{u}_sd W_s \\
&\ -\left\{-(\bar{X}_{s-}^{t,x;\bar{u}}+\bar{u}_s)^{-2}+(\bar{X}_{s-}^{t,x;\bar{u}})^{-2}\right\}\tilde{N}(d s) \\
&\ -\Big\{-(\bar{X}_s^{t,x;\bar{u}}+\bar{u}_s)^{-2}+(\bar{X}_s^{t,x;\bar{u}})^{-2}-2(\bar{X}_s^{t,x;\bar{u}})^{-3}\bar{u}_s\Big\}d s\\
=&\ \Big\{-(\bar{X}_s^{t,x;\bar{u}})^{-2}+3(\bar{X}_s^{t,x;\bar{u}})^{-4}\bar{u}_s^2+(\bar{X}_s^{t,x;\bar{u}}+\bar{u}_s)^{-2}\Big\}d s\\
&\ -2(\bar{X}_s^{t,x;\bar{u}})^{-3}\bar{u}_sd W_s-\left\{-(\bar{X}_{s-}^{t,x;\bar{u}}+\bar{u}_s)^{-2}+(\bar{X}_{s-}^{t,x;\bar{u}})^{-2}\right\}\tilde{N}(d s)\\
=&\ \Big\{(\bar{X}_s^{t,x;\bar{u}})^{-2}-2(\bar{X}_s^{t,x;\bar{u}})^{-3}\bar{u}_s-\left[-(\bar{X}_s^{t,x;\bar{u}}+\bar{u}_s)^{-2}+(\bar{X}_s^{t,x;\bar{u}})^{-2}\right]\\
&\qquad +\left[-(\bar{X}_s^{t,x;\bar{u}})^{-2}+2(\bar{X}_s^{t,x;\bar{u}})^{-3}\bar{u}_s+3(\bar{X}_s^{t,x;\bar{u}})^{-4}\bar{u}_s^2\right]\Big\}d s\\
&\ -2(\bar{X}_s^{t,x;\bar{u}})^{-3}\bar{u}_sd W_s-\left\{-(\bar{X}_{s-}^{t,x;\bar{u}}+\bar{u}_s)^{-2}+(\bar{X}_{s-}^{t,x;\bar{u}})^{-2}\right\}\tilde{N}(d s).
\end{aligned}
 \label{Ito formula of example 2}
\end{equation}

Noting $\bar{u}_s=(\bar{X}_s^{t,x;\bar{u}})^{-1}$, we have
$$
\begin{aligned}
&-(\bar{X}_s^{t,x;u})^{-2}+2(\bar{X}_s^{t,x;u})^{-3}\bar{u}_s+12(\bar{X}_s^{t,x;u})^{-5}\bar{u}_s-10(\bar{X}_s^{t,x;u})^{-6}\\
<& -(\bar{X}_s^{t,x;\bar{u}})^{-2}+2(\bar{X}_s^{t,x;\bar{u}})^{-3}\bar{u}_s+3(\bar{X}_s^{t,x;\bar{u}})^{-4}\bar{u}_s^2.
\end{aligned}
$$

Finally, $P_s<-V_{x x}(s,\bar{X}_s^{t,x;\bar{u}})$ holds by the comparison theorem of BSDEP (see \cite{LiPeng2009}).

$\mathbf{Example \ 5.3.}$ \ Given $t\in[0,T)$, consider the following controlled FBSDEP for $s\in[t,T]$ ($n=1$):
\begin{equation}
\left\{\begin{aligned}
dX_s^{t,x;u}&=\left(X_s^{t,x;u}+2X_s^{t,x;u}u_s\right)d s + X_s^{t,x;u}u_sd W_s + X_{s-}^{t,x;u}u_s \tilde{N}(d s), \\
-dY_s^{t,x;u}&=\left(-Z_s^{t,x;u}u_s-\tilde{Z}_s^{t,x;u}\right)d s-Z_s^{t,x;u}d W_s-\tilde{Z}_s^{t,x;u} \tilde{N}(d s), \\
X_t^{t,x;u}&=x, \quad Y_T^{t,x;u}=X_T^{t,x;u},
\end{aligned}\right.
 \label{state equation of example 3}
\end{equation}
with $\mathbf{U}=[-1,0] \cup [1,2]$. The cost functional is defined as (\ref{cost functional}) and the corresponding generalized HJB equation (\ref{HJB equation}) reads as
\begin{equation}
\left\{\begin{aligned}
&-v_t(t,x)+\sup_{u\in\mathbf{U}} \left\{-\frac{1}{2}v_{x x}(t,x)x^2u^2+v_x(t,x)xu^2-v_x(t,x)xu-v_x(t,x)x\right\}=0, \\
&\quad v(T,x)=-x.
\end{aligned}\right.
 \label{HJB equation of example 3}
\end{equation}
It is easy to verify that the viscosity solution to (\ref{HJB equation of example 3}) is given by
\begin{equation}
V(t, x)=
\begin{cases}
 -e^{t-T} x, & \text { if } x \leqslant 0, \\
 -e^{T-t} x, & \text { if } x>0.
\end{cases}
 \label{value function of example 3}
\end{equation}
Moreover, the first-order and second-order adjoint equations (\ref{first-order adjoint equation}) and (\ref{second-order adjoint equation}) are
\begin{equation}
\left\{\begin{aligned}
-d p_s & =\left\{\left[-(\bar{u}_s)^2+\bar{u}_s+1\right] p_s-\tilde{q}_s\right\} d s-q_s d W_s-\tilde{q}_s\tilde{N}(ds), \\
p_T & =1,
\end{aligned}\right.
\end{equation}
\begin{equation}
\left\{\begin{aligned}
-d P_s & =\left\{\left[-(\bar{u}_s)^2+2\bar{u}_s+2 \right] P_s+\bar{u}_s Q_s-\tilde{Q}_s\right\} d s-Q(s) d W_s-\tilde{Q}_s\tilde{N}(ds), \\
 P_T & =0,
\end{aligned}\right.
\end{equation}
respectively. Note that $(P_s,Q_s,\tilde{Q}_s)=(0,0,0)$ for any $\bar{u}_\cdot \in \mathcal{U}^\omega[t,T]$.

If the initial state $x < 0$, from the comparison theorem of SDEP (\cite{IkedaWatanabe1981}), we have $\bar{X}_s^{t,x;\bar{u}}\leqslant 0$ for all $s\in[t,T]$. We get that $(p_s,q_s,\tilde{q}_s)=(e^{s-T},0,0)$ for $\bar{u}_s=-1$ or 2, and by (\ref{definition of partial super-subjets to x}) we can figure that
$$
\mathcal{D}_x^{2,+} V\left(s,\bar{X}_s^{t,x;u}\right)=\left\{-e^{s-T}\right\}\times [0,\infty),\quad \forall\ s\in[t,T].
$$

If $x > 0$, from again the comparison theorem of SDEP, we have $\bar{X}_s^{t,x;\bar{u}}\geqslant 0$ for all $s\in[t,T]$. We get that $(p_s,q_s,\tilde{q}_s)=(e^{T-s},0,0)$ for $\bar{u}_s=0$ or 1, and we can also figure that
$$
\mathcal{D}_x^{2,+} V\left(s,\bar{X}_s^{t,x;u}\right)=\left\{-e^{T-s}\right\}\times [0,\infty),\quad \forall\ s\in[t,T].
$$

If $x=0$, we have $\bar{X}_\cdot^{t,x;\bar{u}}=0$ for any $u_\cdot \in \mathcal{U}^\omega[t,T]$. From (\ref{value function of example 3}), we have
$$
\mathcal{D}_x^{2,-} V\left(s,\bar{X}_s^{t,x;u}\right)=\emptyset,\quad \mathcal{D}_x^{2,+} V\left(s,\bar{X}_s^{t,x;u}\right)=\left[-e^{T-s},-e^{s-T}\right]\times [0,\infty).
$$
Conversely, (\ref{relation of p,P and V}) always holds.

\section{Concluding remarks }

In this paper, we have investigated the relationship between the general maximum principle and dynamic programming principle for the stochastic optimal control problem of jump diffusions, where the control domain is not necessarily convex. We have first discussed the relationship among the adjoint process, generalized Hamiltonian function and value function when value function is smooth. Then, under the framework of viscosity solutions, we have further researched the case of the value function being not necessarily smooth. The results obtained in this paper cover the case without random jumps (\cite{YongZhou1999}, \cite{NieShiWu2017}), and generalize the case with jumps but the control domain is forced to assume to be a convex set and the value function to be smooth (\cite{Shi2014}).

A challenging problem is when the controlled FSBDEP (\ref{FBSDEP equation}) is generalized to a the following fully coupled one (see \cite{Wu1999}, \cite{Shi2012-}, \cite{LiWei2014}, \cite{YangMoon2023}):
\begin{equation*}
\left\{\begin{aligned}
d X_s^{t, x ; u}= & \ b\left(s, X_s^{t, x ; u},  Y_s^{t, x ; u}, Z_s^{t, x ; u},\tilde{Z}_{(s,\cdot)}^{t,x;u}, u_s\right) d s\\
 & +\sigma\left(s, X_s^{t, x ; u},  Y_s^{t, x ; u}, Z_s^{t, x ; u},\tilde{Z}_{(s,\cdot)}^{t,x;u}, u_s\right) d W_s\\
 & +\int_{\mathcal{E}}f(s,X_{s-}^{t,x;u}, Y_{s-}^{t, x ; u}, Z_{s-}^{t, x ; u},\tilde{Z}_{(s-,\cdot)}^{t,x;u}, u_s,e)\tilde{N}(d e,d s), \\
-d Y_s^{t, x ; u}= & \ g\left(s, X_s^{t, x ; u}, Y_s^{t, x ; u}, Z_s^{t, x ; u},\tilde{Z}_{(s,\cdot)}^{t,x;u}, u_s\right) d s-Z_s^{t, x ; u} d W_s-\int_{\mathcal{E}}\tilde{Z}_{(s,e)}^{t,x;u}\tilde{N}(d e,d s), \\
X_t^{t, x ; u}= & \ x, \ \ \ \ \ Y_T^{t, x ; u}=\phi\left(X_T^{t, x ; u}\right),
\end{aligned}\right.
 \label{fully coupled FBSDEP equation}
\end{equation*}
how to characterize the relationship between the general MP and DPP? A nice literature is \cite{HuJiXue2020}, where the authors dealt with the problem without jumps. We will research this topic in the near future.


\end{CJK}
\end{document}